\newcommand{\la}{\lambda}
\newcommand{\ep}{\epsilon}
\newcommand{\N}{\mbox{$\mathbb{N}$}}
\newcommand{\Z}{\mbox{$\mathbb{Z}$}}
\newcommand{\C}{\mbox{$\mathcal{C}$}}
\newcommand{\R}{\mbox{$\mathbb{R}$}}
\newcommand{\T}{\mbox{$\mathbb{T}$}}
\newcommand{\F}{\mathcal{F}}
\newcommand{\h}{\tilde{h}}
\newtheorem*{teo*}{Theorem}
\newtheorem*{mainteo}{Main Theorem}
\newtheorem{teo}{Theorem}[subsection]
\newtheorem{cor}{Corollary}[subsection]
\newtheorem{lema}{Lemma}[subsection]
\newtheorem{prop}{Proposition}[subsection]
\newcommand{\bi}{\begin{itemize}}
\newcommand{\ei}{\end{itemize}}
\theoremstyle{definition}
\newtheorem{defi}{Definition}[section]
\theoremstyle{remark}
\newtheorem{obs}[]{Remark}[section]
\author{Alejandro Passeggi, Mart\'{\i}n Sambarino}
\address{CMAT, Facultad de Ciencias, Universidad de la Rep\'ublica, Uruguay}
\email{alepasseggi@cmat.edu.uy} \email{samba@cmat.edu.uy}
\title[Examples of minimal diffeomorphisms on $\T^{2}$]{Examples of minimal diffeomorphisms on $\T^{2}$ semiconjugated to an ergodic translation}
\begin{document}

\begin{abstract}

We prove that for every $\epsilon>0$ there exists a minimal
diffeomorphism $f:\T^{2}\rightarrow\T^{2}$ of class $C^{3-\epsilon}$
and semiconjugate to an ergodic traslation, with the following
properties: zero entropy, sensitivity with respect to initial
conditions, and Li-Yorke chaos. These examples are obtained through
the holonomy of the unstable foliation of Ma\~{n}\'{e}'s example of derived
from Anosov diffeomorphism on $\T^3.$






\end{abstract}
\maketitle

\section{Introduction.}

The classical result of Denjoy (\cite{D}) can be stated as follows:
if $f:S^1\to S^1$ is a $C^2$ diffeomorphism semiconjugated to an
ergodic rotation then it is indeed conjugated to it. One may ask to
what extent Denjoy´s theory on $S^1$ can be extended to higher
dimensional tori. In particular one may ask: Does there exist $r$ so
that if $f:\T^2\to \T^2$ is a $C^r$ diffeomorphism semiconjugated to
an ergodic translation then $f$ is conjugated to it? This seems to
be a very difficult question. Nevertheless, KAM theory provides a
particular result when $f$ is close to an ergodic translation of
diophantine type. There are also some indications that if the above
question has a positive answer, then $r=3$ (see for instance
\cite{McS} for $C^{3-\ep}$ examples with wandering domains and
\cite{NS}).

One may ask if there are extra restrictions on the differentiability
class of a Denjoy type map $f:\T^2\to\T^2$ (i.e. semiconjugataed
-but not conjugated- to an ergodic traslation) if we also assume
that $f$ is minimal (i.e. every orbit is dense). In this paper we
give examples of this type of class $C^{3-\ep}$ for any $\ep>0.$ If
we denote by $h$ the semiconjugacy then, if $f$ is minimal, the
fibers $h^{-1}(x)$ have empty interior and we may ask how they
look-like. We show that the fibers are points or arcs. The first
(topological) example of this kind was given by M. Rees in \cite{R1}
in order to construct a non-distal (but point distal) homeomorphism
of $\T^2.$ Recall that $f$ is \textit{non-distal} if there exist
$x\neq y$ such that $\inf_{n\in\Z}\{dist(f^n(x),f^n(y))\}=0$ and $f$
is \textit{point distal} if there exists $x$ such that for any
$y\neq x, \inf_{n\in\Z}\{dist(f^n(x),f^n(y))\}>0.$

The dynamics on  the nontrivial fibers by the action of the map $f$
has a chaotic flavour: it compress them to an arc of lenght
arbitrarily small and then stretches to an arc of fixed length and
then compresses them and so on. Morevoer, there are some nontrivial
fibers that the the latter holds for any subarc. These imply
interesting properties: \textit{sensitivity with respect to initial
conditions} (that is, there exists some $\ep>0$ so that for any
$x\in \T^2$ and any nieghborhood $U(x)$ there exists $y\in U$ and
$n>0$ such that $dist(f^n(x),f^n(y))>\ep)$ and \textit{Li-Yorke
chaos} (i.e, a noncountable \textit{scrambled} set where any pair of
points $x\neq y$ in this set verify $\liminf_n
dist(f^n(x),f^n(y))=0$ and $\limsup_n dist(f^n(x),f^n(y))>0$). One may ask whether 
these examples also have distributional chaos and whether deserve to be called chaotic (see \cite{O} for a discussion  on the subject).

It is also interesting to ask about the ergodic properties of these
Denjoy type maps.  Unfortunately, our examples are simple from this
point of view: they have just one invariant measure, i.e., are
\textit{uniquely ergodic}. Neverhteless we post the question: does
there exist minimal diffeomorphism semiconjugated to an ergodic
translation not uniquely ergodic? For homeomorphisms the answer is
positive (see \cite{R2}).

Our result is the following:

\begin{mainteo}\label{mainteo}
For all $r\in [1,3)$ there exists a diffeomorphism $f:\T^2\to\T^2$
of class $C^r$ such that:
\begin{itemize}
\item $f$ is minimal.
\item $f$ is isotopic and semiconjugated (but not conjugated) to an ergodic traslation. If we denote by $h$ the semiconjugacy, then $h^{-1}(x)$ is either a point or an arc. Moreover, there are uncountable points $x$ such that $h^{-1}(x)$ is a nontrivial arc.

\item $f$ preserves a minimal and  invariant foliation with one dimensional $C^1$  leaves. The fibers $h^{-1}(x)$ are contained in the leaves of this foliation.
    \item $f$ has zero entropy.
\item $f$ has sensitivity with respect to initial conditions
\item exhibits Li-York chaos.
\item $f$ is point-distal and non-distal.
\item $f$ is uniquely ergodic.
\end{itemize}
\end{mainteo}

The proof of our theorem is inspired by \cite{McS}. There, the
examples are constructed through the holonmy map from a cross
section to itself of the unstable foliation of a derived from Anosov
diffeomorphism obtained through a Hopf´s bifurcation. In this paper
we use instead Ma\~{n}\'{e}'s example of derived from Anosov diffeormphism
(\cite{M1}) where we prove that the unstable foliation is minimal.
However, there is a main difference with \cite{McS}: while  there
the starting linear Anosov map is fixed and for any $\ep$ a
modification is taken so that the unstable foliation is $C^{3-\ep}$
we have to do it the other way around, that is, given $\ep>0$ we
have to find the  linear Anosov map to begin with so that a
modification can be done such that the unstable foliation of the
resulting diffeomorphism is $C^{3-\ep}.$ This modification also
includes the existence of periodic points of different unstable
indices and the existence of transversal homoclinic points
associated to them.

The paper is organized as follows: in Section \ref{secmane} we give
our construction of Ma\~{n}\'{e}'s derived from Anosov diffeomorphism and we
prove the minimality of the unstable foliation (see Section
\ref{secmini}) and the minimality of the central foliation through
the semiconjugacy with the linear Anosov map (see Section
\ref{secsemiup}); in Section \ref{secholonomy} we give the
topological version of our main result and in Section \ref{secdif}
we prove the differentiability of the unstable foliation through the
$C^r$ Section Theorem (\cite{HPS}).

\textbf{Acknowledgements:} We would like thanks Alejandro Kocsard,
Andres Koropecki and specially to Tobias J\"{a}ger for useful
conversations and comments.

\section{On Ma\~{n}e's Derived from Anosov diffeomorphism}\label{secmane}

In \cite{M1} R.Ma\~{n}\'{e} construct an example on $\T^{3}$ which is
robustly-transitive but not Anosov. This is known as Ma\~{n}e's Derive
from Anosov diffeomorphisms due to the construction: it begins with
an Anosov linear map on $\T^3$ with partially hyperbolic structure
$E^{s}\oplus E^c\oplus E^u$ and modifies it in a neighborhood of the
fixed point in order to change the unstable index of it (and
preserving the partially hyperbolic structure). See Figure
\ref{indices}.

\begin{figure}[ht]\begin{center}

\psfrag{ss}{$E_{\lambda_s}$}\psfrag{uu}{$E_{\lambda_u}$}\psfrag{c}{$E_{\lambda_c}$}
\psfrag{0}{$p$}\psfrag{1}{$q_1$}\psfrag{2}{$q_2$}
\psfrag{ssm}{$E^{ss}$}\psfrag{cm}{$E^{c}$}\psfrag{uum}{$E^{uu}$}\psfrag{I}{$I_p$}

\includegraphics[height=4cm]{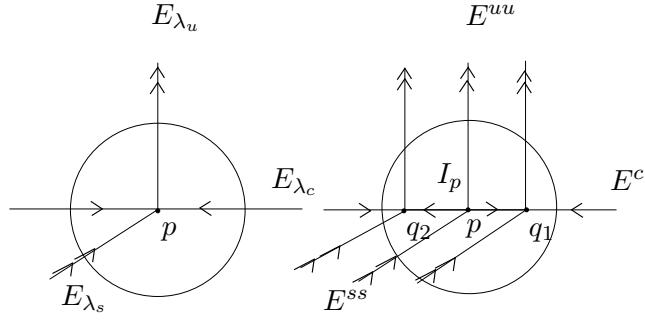}
\caption{Modfication.}\label{indices}
\end{center}\end{figure}

Let us be more precise. Let $\T^{3}=\R^3/_{\Z^3}$ be the three
dimensional torus and  denote by $\pi:\R^{3}\rightarrow\T^{3}$ the
canonical projection, and set  $p=\pi(0)$.

Consider $B\in SL(3,\Z)$ with eigenvalues
$0<\lambda_s<\lambda_c<1<\lambda_u$ and denote also by  $B$ the
induced Linear Anosov system on $\T^{3}$ with hyperbolic structure
$T\T^3=E^s\oplus E^c\oplus E^u$ (corresponding to the eigenspaces
associated to $\lambda_s,\lambda_c$ and $\lambda_u).$ For the sake
of simplicity to do our calculations we will define an Euclidean
metric on $\R^3$ so that $E^s_B, E^c_B$ and $E^u_B$ are mutally
orthogonal.

Let $\rho$ be small and consider $B(p,\rho)$ the ball centered at
$p.$ Let $Z:\R\rightarrow\R$ be a $C^{\infty}$ bump function such
that $Z(0)=1$, $sop[Z]\subset (-\frac{\rho}{2},\frac{\rho}{2})$ and
$|Z'(z)|<\frac{4}{\rho}$. (See Figure \ref{Z})

\begin{figure}[ht]\begin{center}

\psfrag{Z}{$Z$} \psfrag{1}{$1$} \psfrag{ro}{$\frac{\rho}{2}$}
\psfrag{-ro}{$-\frac{\rho}{2}$}

\includegraphics[height=4cm]{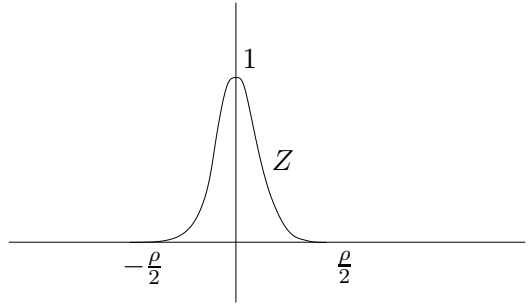}
\caption{The bump function $Z$.}\label{Z}
\end{center}\end{figure}

For our construction of the Ma\~{n}e's Derived from Anosov \footnote{Our
contruction is slighty different because we need to keep control of
the relation between $E^s$ and $E^c$ to obtain higher
differentiability of the unstable foliation. In particular the
central foliation is not kept unchanged.} we need an auxiliary
function as in the next lemma.

\begin{lema}\label{mod}

For all $k>0$ arbitrarily small there exist a function
$\beta_k:\R^{+}\cup \{0\}\rightarrow\R$ such that:

\begin{enumerate}
\item $\beta_k$ is $C^{\infty}$, decreasing and such that $-k
\leq \beta_k'(t)t \leq 0$.

\item $\beta_k$ is suuported in $[0.k]$, i.e. $supp[\beta_k]\subset [0,k].$
\item $\lambda_s + \beta_k(0)< 1 < \lambda_c + \beta_k(0)<1+k$.

\end{enumerate}

\end{lema}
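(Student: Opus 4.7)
The plan is to first pin down $\beta_k(0)$ from condition (3), and then construct the profile of $\beta_k$ on $[0,k]$ satisfying the slope estimate in (1). The three inequalities in (3) rewrite as
\[ \beta_k(0) \in (1 - \lambda_c,\; \min(1-\lambda_s,\, 1+k-\lambda_c)), \]
which for $k < \lambda_c-\lambda_s$ reduces to the length-$k$ interval $(1-\lambda_c,\,1+k-\lambda_c)$. I will take $c := 1 - \lambda_c + k/2$ and aim to produce $\beta_k$ with $\beta_k(0)=c$.

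The real content is condition (1). Since $c$ is of order one (bounded below by $1-\lambda_c > 0$) while the interval $[0,k]$ shrinks with $k$, and at the same time $|\beta_k'(t)|$ must be no more than $k/t$, the naive self-similar scaling $\beta_k(t) = c\,\phi(t/k)$ is ruled out: it produces $|\beta_k'(t)\cdot t| \asymp c$, which is of order one and violates the bound. The key observation is that the envelope $k/t$ is only \emph{logarithmically} nonintegrable at the origin: $\int_{a}^{k}(k/t)\,dt = k\log(k/a)$ can be made as large as one wishes by choosing $a$ exponentially small in $1/k$. This is the main and essentially only difficulty.

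To implement this, fix once and for all a $C^{\infty}$ decreasing function $\alpha:\R\to[0,1]$ with $\alpha\equiv 1$ on $(-\infty,0]$ and $\alpha\equiv 0$ on $[1,\infty)$, and set $M := \sup|\alpha'|$. Put $a_k := k\,\exp(-cM/k)$ and define
\[ \beta_k(t) := \begin{cases} c & \text{if } 0\le t\le a_k,\\ c\,\alpha\!\left(\dfrac{\log(t/a_k)}{\log(k/a_k)}\right) & \text{if } a_k\le t\le k,\\ 0 & \text{if } t\ge k. \end{cases} \]
Because $\alpha$ has all derivatives vanishing at $0$ and $1$, the three pieces match to all orders at the gluing points $a_k$ and $k$, so $\beta_k\in C^{\infty}([0,\infty))$; monotonicity and the support condition (2) are immediate; and a direct computation yields
\[ \beta_k'(t)\cdot t \;=\; \frac{c\,\alpha'\!\left(\tfrac{\log(t/a_k)}{\log(k/a_k)}\right)}{\log(k/a_k)}\;\in\;[-k,0], \]
the last inclusion holding by the choice $\log(k/a_k)=cM/k$. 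All three conditions are then verified, and no further difficulty arises.
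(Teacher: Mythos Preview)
Your proof is correct and rests on the same key observation as the paper's: the divergence of $\int_0^{r_0}(k/t)\,dt$ allows one to fit any prescribed height $\beta_k(0)$ while keeping $|t\,\beta_k'(t)|\le k$. The paper carries this out by choosing a nonnegative $C^\infty$ function $\psi\le k/t$ supported in $[0,r_0]\subset[0,k]$ with $\int_0^{r_0}\psi=b$ and setting $\beta_k(t)=b-\int_0^t\psi$, whereas you produce an explicit instance of such a $\beta_k$ via the logarithmic change of variable; the two constructions are essentially equivalent.
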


\begin{figure}[ht]\begin{center}

\psfrag{kt}{$y=\frac{k}{t}$} \psfrag{b}{$b$} \psfrag{r0}{$r_0$}
\psfrag{psi}{$\psi$}

\includegraphics[height=5cm]{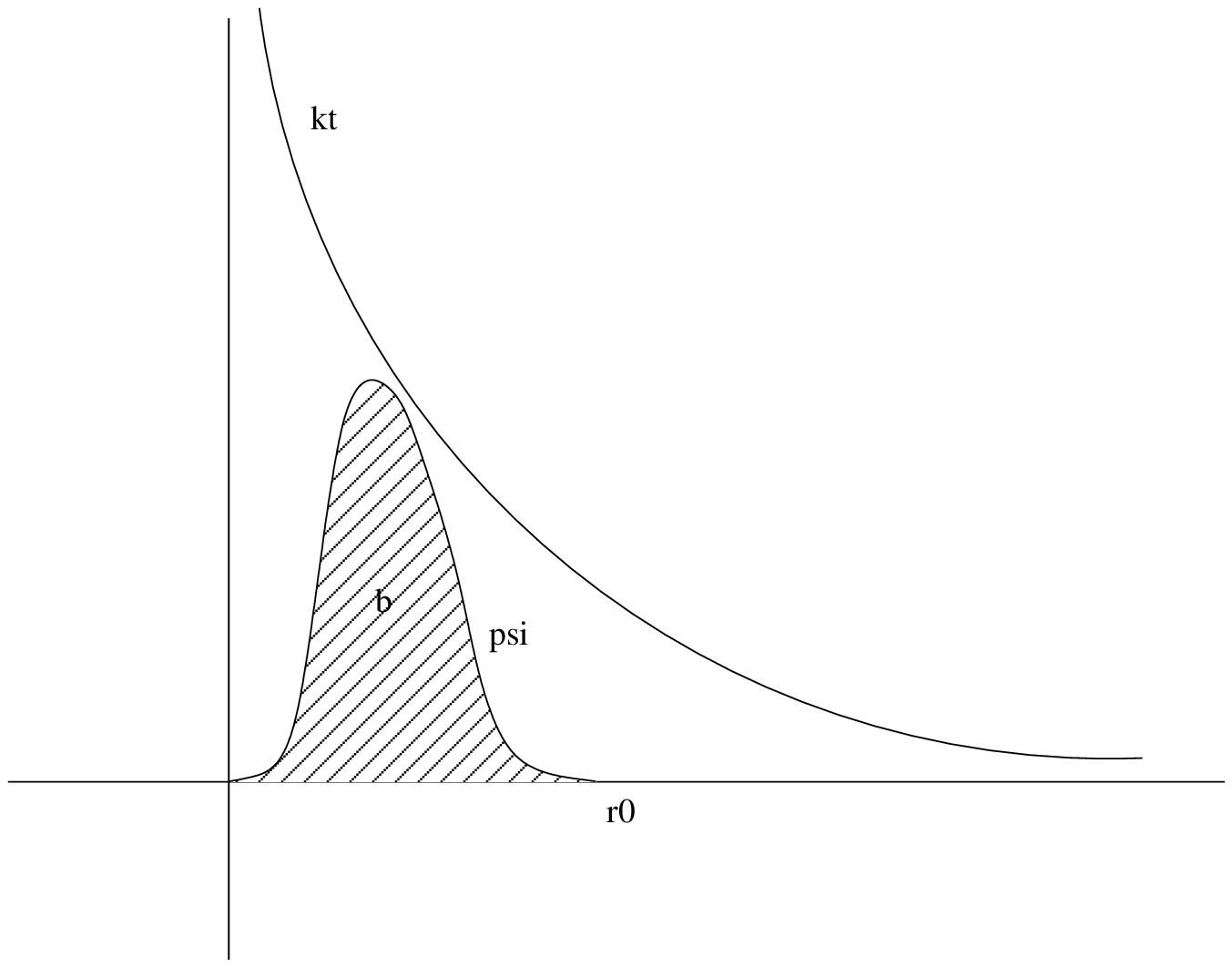}
\caption{The function $\psi$.}\label{psi}
\end{center}\end{figure}

\begin{proof}

We may assume that $0<k<\lambda_c-\lambda_s$ and take $b$ such that
$1-\lambda_c<b<1-\lambda_c+k.$ Let $r_0<k$. Since
$\int_0^{r_0}\frac{k}{t}dt$ is divergent we may find a $C^\infty$
non negative function $\psi$ with support in $[0,r_0]$ such that
$\int_0^{r_0}\psi(t)dt=b$ and $\psi(t)\le \frac{k}{t}$ (in other
words the graph of $\psi$ is below the graph of $h(t)=\frac{k}{t}.$

Define
$$\beta_k(t)=b-\int_0^t\psi(s)ds.$$

This function satisfies the lemma.
\end{proof}

Finally, define $g_{B,k}: \T^3\to\T^3$ defined by:

\begin{equation}\label{id}
g_{B,k}(\xi)=B(\xi) \,\,\,\,\,\mbox{ for }\,\,\,\,\, \xi\notin
B(p,\rho)
\end{equation}
and for $\xi\in B(p,\rho)$ in local coordinates with respect to
$E^s_B\oplus E^c_B\oplus E^u_B, \xi=(x,y,z)$
\begin{equation}\label{g}
g_{B,k}(\xi)=(\lambda_s x,\lambda_c y,\lambda_u
z)+Z(z)\beta_{k}(r)(x,y,0)
\end{equation}

where $r=x^{2}+y^{2}$.

\begin{prop}\label{DA}
If $k$ is sufficiently small, then $g_{B,k}:\T^3\to\T^3$ defined
above is a diffeomorphism with partially hyperbolic structure
\footnote{We remark that it is not absolutely partially hyperbolic.}
$T\T^3=E^s_{g_{B,k}}\oplus E^c_{g_{B,k}}\oplus E^u_{g_{B,k}}$ where
$E^s_{g_{B,k}}$ is uniformly contracting and $E^u_{g_{B,k}}$ is
uniformly expanding. Moreover, given cones $C^s, C^c$ and $C^u$
around $E^s_B, E^c_B$ and $E^u_B$ respectively we have that
$E^s_{g_{B,k}}\in C^s, E^c_{g_{B,k}}\in C^c$ and $E^u_{g_{B,k}}\in
C^u.$ Furthermore, the same is true for any $g$ in any sufficiently
small $C^1$ neighborhood $\mathcal{U}$ of $g_{B,k}$.
\end{prop}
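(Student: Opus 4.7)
The plan is to verify partial hyperbolicity via the classical invariant cone-field criterion, exploiting the fact that in the basis $\{E^s_B, E^c_B, E^u_B\}$ the differential $Dg_{B,k}$ is an $O(\sqrt{k})$-perturbation of a diagonal matrix with explicit, uniformly gap-separated spectrum. Outside $B(p,\rho)$ the map is just $B$, where everything is automatic, so all the work concentrates inside the ball.

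In local coordinates $(x,y,z)$ on $B(p,\rho)$ a direct calculation gives
\[
Dg_{B,k}(\xi) \;=\; \mathrm{diag}\bigl(\lambda_s+Z(z)\beta_k(r),\,\lambda_c+Z(z)\beta_k(r),\,\lambda_u\bigr) \;+\; R(\xi),
\]
where the remainder $R(\xi)$ collects the terms $2x^2 Z\beta_k'(r)$, $2xy\,Z\beta_k'(r)$, $2y^2 Z\beta_k'(r)$ and $Z'(z)\beta_k(r)\,x,\,Z'(z)\beta_k(r)\,y$. Lemma~\ref{mod}(1) gives $|r\beta_k'(r)|\le k$, which controls the first three entries, and since $\beta_k$ is supported in $[0,k]$ one has $|x|,|y|\le\sqrt{k}$ on $\mathrm{supp}(R)$, which together with $|Z'|\le 4/\rho$ and $|\beta_k|\le \beta_k(0)<1$ controls the last two. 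Hence $\|R\|_{C^0}=O(\sqrt{k})\to 0$, while by Lemma~\ref{mod}(3) the diagonal entries remain uniformly gap-separated as
\[
\lambda_s+Z\beta_k \;\le\; \lambda_s+\beta_k(0)\;<\;1\;<\;\lambda_c+\beta_k(0)\;<\;1+k\;<\;\lambda_u.
\]

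Next, given cones $C^s,C^c,C^u$ around $E^s_B,E^c_B,E^u_B$, for $k$ small enough the diagonal gap dominates $R$ and one obtains the strict cone invariances $Dg_{B,k}(\overline{C^u})\subset C^u$, $Dg_{B,k}(\overline{C^u\oplus C^c})\subset C^u\oplus C^c$ and the dual statements for $Dg_{B,k}^{-1}$ on $C^s$ and $C^s\oplus C^c$. From the same estimates, the expansion rate on $C^u$ is at least $\lambda_u - O(\sqrt{k})>1$, the contraction rate on $C^s$ is at most $\lambda_s+\beta_k(0)+O(\sqrt{k})<1$, and the center rate on $C^c$ stays strictly in between. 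This is exactly a dominated partially hyperbolic splitting, and the invariant subbundles $E^s_{g_{B,k}},E^c_{g_{B,k}},E^u_{g_{B,k}}$ appear in the standard way as intersections of forward/backward iterates of the cones, thus lying in the prescribed cones by construction.

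It remains to check that $g_{B,k}$ is a diffeomorphism and that the properties persist in a $C^1$-neighborhood. Invertibility of $Dg_{B,k}$ reduces to the $2\times 2$ upper-left block, whose determinant is an $O(\sqrt{k})$-perturbation of $(\lambda_s+Z\beta_k)(\lambda_c+Z\beta_k)>0$, hence nonzero everywhere; since $g_{B,k}$ is homotopic to $B\in\mathrm{SL}(3,\Z)$ through the natural $1$-parameter family scaling $\beta_k$, it is a local diffeomorphism of the compact torus of degree $\pm 1$, hence a global diffeomorphism. The $C^1$-robustness is then automatic, because strict cone invariance and the strict expansion/contraction inequalities are $C^1$-open conditions. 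The main technical point in the whole argument is the uniform control of $R$: the factor $\beta_k'(r)$ blows up as $r\to 0^+$ (by construction $\beta_k$ must drop by more than $1-\lambda_c$ across an interval of length less than $k$), and it is precisely the estimate $|r\beta_k'(r)|\le k$ engineered into Lemma~\ref{mod} that absorbs this singularity and lets the cone-field bookkeeping close.
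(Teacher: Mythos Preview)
Your proof is correct and follows essentially the same route as the paper: both write $Dg_{B,k}$ as the diagonal matrix $\mathrm{diag}(\lambda_s+Z\beta_k,\lambda_c+Z\beta_k,\lambda_u)$ plus a remainder whose norm is $O(\sqrt{k})$ (using $|r\beta_k'(r)|\le k$ and $|x|,|y|\le\sqrt{k}$ on the support), and then invoke the cone criterion on this perturbation of a gap-separated diagonal map. The only cosmetic difference is that you deduce global bijectivity via a degree/homotopy argument whereas the paper observes directly that $g_{B,k}$ is $C^0$-close to $B$ (hence a homeomorphism) with everywhere invertible differential.
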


\begin{proof}
First of all, the $C^0$ distance between $g_{B,k}$ and $B$ is
smaller than $\sqrt{k}$ and hence (assuming $k$ small) we conclude
that $g_{B,k}$ is a differentiable homeomorphism. To avoid notation,
set $g=g_{B,k}$ for the time being.

For $\xi\notin B(p,\rho)$ we have $dg_\xi=B.$ For $\xi\in B(p,\rho)$
we have (with respect to the decomposition $E^s\oplus E^c\oplus
E^u$)
\begin{equation}
dg_{\xi}= \left( \begin{array}{ccc}
\lambda_s + Z(z)(\beta(r)+\beta'(r)2x^2) & Z(z)\beta'(r)2xy & Z'(z)\beta(r)x \\
Z(z)\beta'(r)2xy& \lambda_c + Z(z)(\beta(r)+\beta'(r)2y^2) & Z'(z)\beta(r)y \\
0 & 0 & \lambda_u \end{array} \right)
\end{equation}

We may write $dg_\xi=A_\xi+M_\xi$ (and agreeing that $Z$ and $\beta$
are identically zero outside $B(p,\rho)$) where
\begin{equation}
A_{\xi}= \left( \begin{array}{ccc}
\lambda_s + Z(z)\beta(r) & 0 & 0\\
0 & \lambda_c + Z(z)\beta(r) & 0 \\
0 & 0 & \lambda_u \end{array} \right)
\end{equation}
and
\begin{equation}
M_\xi=\left( \begin{array}{ccc}
 Z(z)\beta'(r)2x^2 & Z(z)\beta'(r)2xy & Z'(z)\beta(r)x \\
Z(z)\beta'(r)2xy&  Z(z)\beta'(r)2y^2 & Z'(z)\beta(r)y \\
0 & 0 & 0 \end{array} \right)
\end{equation}
Since $|\beta'(r)r|\le k$ it is straighborward to check that
$\|M_\xi\|\leq \max\{2k, 8\beta(0)\sqrt{k}/\rho\}.$ Therefore,
choosing $k$ arbitrarily small we get that $\|M_\xi\|$ is also
arbitrarily small. Since the co-norm ($=\|A_\xi^{-1}\|^{-1}$)  of
$A_\xi$ is bounded away from zero we have that $dg_\xi$ is an
isomorphism and hence $g$ a diffeomorphism. On the other hand,
$A_\xi(E^j_B)=E^j_B,\;j=s,c,u$ and
\begin{itemize}
\item $\lambda_s\le \|A_{\xi/E^s_B}\|\le \lambda_s+\beta_k(0)<1$
\item $\lambda_c\le \|A_{\xi/E^c_B}\|\le \lambda_c+\beta_k(0)<1+k$
\item $\frac{\|A_{\xi/E^s_B}\|}{\|A_{\xi/E^c_B}\|}\le \frac{\lambda_s}{\lambda_c}<1.$
\item $\|A^-1_{\xi/E^u_B}\|\le \lambda_u^{-1}.$
\end{itemize}
 From this it is easy to conclude the proof of the proposition,
 taking $k$ sufficiently small (and so $\|M_\xi\|$ is sufficiently
 small) and taking $\mathcal U$ sufficiently small.

\end{proof}

For $g_{B,k}:\T^{3}\rightarrow\T^{3}$ with $k$ small and
$g\in\mathcal U(g_{B,k})$ so that the above proposition applies we
set:

\begin{itemize}

\item $\lambda_{s}(g)(\xi)=\|
dg_{\xi/E_g^{s}}\|$, and
$\lambda_{s}(g)=max_{\xi\in\T^{3}}\{\lambda_{s}(g)(\xi)\}$.

\item $\lambda_c(g)(\xi)=\|
dg_{\xi/E_g^{c}}\|$, and
$\lambda_c(g)=max_{\xi\in\T^{3}}\{\lambda_c(g)(\xi)\}$.

\item $\lambda_{u}(g)(\xi)=\|
dg_{\xi/E_g^{u}}\|$, and
$\lambda_{u}(g)=min_{\xi\in\T^{3}}\{\lambda_{u}(g)(\xi)\}$.

\end{itemize}

\begin{obs}\label{epsilon}
 Notice that, given $\ep>0$ small,  the following conditions hold
for $g\in\mathcal U(g_{B,k})$ with $k$ and $\mathcal {U}$
sufficiently small:

\begin{enumerate}

\item
$0<\lambda_{s}(g)(\xi)<\lambda_c(g)(\xi)<\lambda_{u}(g)(\xi)$ for
all $\xi\in\T^{3}$.

\item $\lambda_c-\ep<\lambda_c(g)(\xi)$ for all $\xi\in\T^3.$

\item $\lambda_{s}(g)<\lambda_s+\beta(0)+\ep<1$.

\item $\lambda_s(g)(\xi)<\lambda_s+\ep$ for $\xi \in\T^3-
B(p,\rho).$
\item $\lambda_{c}(g)<\lambda_c+\beta(0)+\ep$.

\item $\lambda_{u}(g)>\la_u-\ep>1$ and
$\lambda_{u}(g)>\lambda_c+\beta(0)+\ep.$

\end{enumerate}
\end{obs}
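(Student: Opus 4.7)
The plan is to establish each of the six inequalities first for $g=g_{B,k}$ itself and then pass to a $C^{1}$ neighborhood by continuity of both $dg$ and of the invariant splitting (the latter being guaranteed, together with the cone inclusions $E^{j}_{g}\in C^{j}$, by Proposition \ref{DA}). The six items are fundamentally pointwise statements about the norm of $dg$ restricted to the invariant one-dimensional subbundles, so the strategy is to exploit the decomposition $dg_{\xi}=A_{\xi}+M_{\xi}$ already written down in the proof of Proposition \ref{DA}, together with the control $\|M_{\xi}\|\le \max\{2k,\,8\beta(0)\sqrt{k}/\rho\}$, which can be made as small as we like by choosing $k$ small.

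The argument splits into two regions. Outside $B(p,\rho)$ we have $dg_{B,k}=B$, whose restrictions to $E^{s}_{B},E^{c}_{B},E^{u}_{B}$ have norms $\lambda_{s},\lambda_{c},\lambda_{u}$. Since $E^{j}_{g}\in C^{j}$ and the cones can be taken arbitrarily thin, $\|dg_{\xi/E^{j}_{g}}\|$ differs from $\lambda_{j}$ by $O(\eps)$; this immediately gives (4) and feeds into (1) on that region. Inside $B(p,\rho)$ the diagonal piece $A_{\xi}$ acts as multiplication by $\lambda_{s}+Z(z)\beta_{k}(r)$, $\lambda_{c}+Z(z)\beta_{k}(r)$, $\lambda_{u}$ on the three $B$-eigenspaces, so using $E^{j}_{g}\in C^{j}$ together with the perturbative bound on $\|M_{\xi}\|$ one obtains
\[
\|dg_{\xi/E^{j}_{g}}\|=\lambda_{j}+Z(z)\beta_{k}(r)+O(\sqrt{k})\qquad (j=s,c),\qquad \|dg_{\xi/E^{u}_{g}}\|=\lambda_{u}+O(\sqrt{k}).
\]
Combined with the sign $\beta_{k}\ge 0$ and the uniform bound $\beta_{k}\le \beta_{k}(0)$, this directly yields the upper bounds in (3) and (5) and the lower bound in (2); the lower bound in (6), namely $\lambda_{u}(g)>\lambda_{c}+\beta_{k}(0)+\eps$, uses Lemma \ref{mod}(3), which guarantees $\lambda_{c}+\beta_{k}(0)<1+k<\lambda_{u}-\eps$ once $k$ is small compared to $\lambda_{u}-1$. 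Item (1) then follows because the three values are separated by a uniform gap once $k,\eps$ are small enough.

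Having proved everything for $g=g_{B,k}$ with strict inequalities, we enlarge to a $C^{1}$ neighborhood $\mathcal U$. The splitting $E^{s}_{g}\oplus E^{c}_{g}\oplus E^{u}_{g}$ varies continuously with $g$ in the $C^{0}$ topology on subbundles (standard invariant cone field / dominated splitting argument, already invoked in Proposition \ref{DA}), and $dg$ varies continuously with $g$ in $C^{1}$, so each function $\xi\mapsto \|dg_{\xi/E^{j}_{g}}\|$ varies continuously with $g$ in the uniform norm on $\T^{3}$. Since the inequalities (1)--(3), (5), (6) are strict and uniform in $\xi$, they persist in a small $\mathcal U$; for (4), which is localized to $\T^{3}\setminus B(p,\rho)$, the same persistence argument applies on this compact set.

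The only genuinely delicate point is the step ``$E^{j}_{g}\in C^{j}\Rightarrow\|dg_{\xi/E^{j}_{g}}\|\approx\|A_{\xi/E^{j}_{B}}\|$''. The natural obstacle is that $A_{\xi}$ is not conformal across the splitting and $M_{\xi}$ couples the $E^{s}_{B}$ and $E^{c}_{B}$ directions via the off-diagonal $Z(z)\beta'(r)2xy$ terms; the cone width is what controls the error, and the whole scheme hinges on the fact that these couplings are $O(k)$ because $|\beta'_{k}(r)r|\le k$. This is exactly the observation that made Proposition \ref{DA} work, so the same estimates are reused here with no new ingredient.
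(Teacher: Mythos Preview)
Your proposal is correct and is essentially the approach the paper has in mind. In fact the paper does not give a separate proof of this remark at all: it is stated as an observation immediately after Proposition \ref{DA}, meant to follow directly from the decomposition $dg_\xi=A_\xi+M_\xi$, the bounds $\lambda_s\le\|A_{\xi/E^s_B}\|\le\lambda_s+\beta_k(0)$, $\lambda_c\le\|A_{\xi/E^c_B}\|\le\lambda_c+\beta_k(0)$, $\|A_{\xi/E^u_B}\|=\lambda_u$, the smallness of $\|M_\xi\|$, and the cone inclusions $E^j_g\in C^j$. Your write-up simply makes these implicit steps explicit and handles the passage to a $C^1$ neighborhood by continuity of the splitting, which is exactly what the paper's ``Furthermore'' clause in Proposition \ref{DA} is asserting.
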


Once we know that $g\in\mathcal U(g_{b,k})$ is partially hyperbolic,
by well known results (see \cite{HPS}) we get that the bundles
$E^s_g$ and $E^u_g$ uniquely integrate to foliations $\F^s_g$ and
$\F^u_g$ called the (strong) stable and unstable foliations
respectively.

Moreover, since $E^s_g$ and $E^u_g$ are contained in tiny cones
around $E^s_B$ and $E^u_B$ we conclude that $\F^s_g$ and $\F^u_g$
are quasi isometric. Nevertheless, since $g$ is not absolutely
partially hyperbolic the result in  \cite{B} (see also \cite{BBI})
does not apply to prove that $E^c_g$ is uniquely integrable.
Recently, R. Potrie \cite{Po} has extended the results in \cite{BBI}
to the non absolutely partially hyperbolic setting and we conclude
that $E^c_g$ is uniquely integrable. However, for our particular
case we can give a direct proof of the unique integration of $E^c_g$
in the spirit of \cite{B} (see Section \ref{integrability}). We denote by $\F^c_g$ this central
foliation, and therefore the bundles $E^s_g\oplus E^c_g$ and
$E^c_g\oplus E^u_g$ are uniquely integrable and leads to the central
stable and central unstable foliations. We also remark that in the
particular case $g=g_{B,k}$ it holds that $E^s_g\oplus
E^c_g=E^s_B\oplus E^c_B$ and so the central stable foliation of
$g_{B,k}$ coincides with the two dimensional stable foliation of
$B.$

Also, in the following subsections we are going to study properties of the
invariant foliations and also consequences of the semiconjugacy with
the linear Anosov map. These results are fundamental for our
purposes.

\begin{teo}\label{folmin}
For all $k$ sufficiently small and $\mathcal U(g_{B,k})$
sufficiently small as well, the central bundle $E^c_g$ uniquely
integrates to an invariant foliation $\F^c_g.$ Furthermore, the
central and unstable foliations $ \F^c_g, \F^u_g$ of
$g\in\mathcal U(g_{B,k})$ are minimal, i.e., all leaves are dense.
\end{teo}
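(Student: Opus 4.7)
The plan splits into unique integrability of $E^c_g$ and minimality of $\F^u_g$ and $\F^c_g$. For integrability I begin with the model $g=g_{B,k}$: the derivative computation in the proof of Proposition \ref{DA} shows that $E^s_B\oplus E^c_B$ is $dg_\xi$-invariant at every $\xi$, hence $E^{cs}_g = E^s_B\oplus E^c_B$ and this distribution integrates to the linear center-stable foliation of $B$. On each of its 2-dimensional leaves, $E^c_g$ is a continuous 1-dimensional subbundle, and line fields on surfaces integrate uniquely to $C^0$ foliations by elementary ODE theory. For $g\in\cU$ the center-stable foliation persists, by the direct argument announced for Section \ref{integrability}, and the same leaf-by-leaf reasoning yields $\F^c_g$ together with its uniqueness.

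For minimality I transfer density from the linear model through the semiconjugacy $h:\T^3\to\T^3$ with $h\circ g=B\circ h$ produced in Section \ref{secsemiup}. The eigenvectors $E^u_B$ and $E^c_B$ of $B\in SL(3,\Z)$ generate irrational one-parameter subgroups of $\T^3$, so the linear foliations $\F^u_B$ and $\F^c_B$ are minimal. By construction the fibers of $h$ lie in center-stable leaves of $g$, so that (i) $h$ is injective on every strong unstable leaf, and (ii) by the dynamical characterization of $W^u$ as the backward-asymptotic class, preserved by any topological semiconjugacy, one has the leaf identity $h(W^u(x,g))=W^u(h(x),B)$. The analogous identity $h(W^c(x,g))=W^c(h(x),B)$ holds as well, although now $h$ merely monotonically collapses central subarcs. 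Combining these with the $C^0$ estimate $\|h-\mathrm{id}\|_{C^0}\le\delta$, which shrinks with $k$ and the diameter of $\cU$, minimality becomes a short computation: given $y\in\T^3$ and $\eps>0$, choose $\delta<\eps/3$, use minimality of $\F^u_B$ to find $z\in W^u(h(x),B)$ with $d(z,h(y))<\eps/3$, and lift to $z=h(x')$ with $x'\in W^u(x,g)$; then
\[ d(x',y)\le d(x',h(x'))+d(h(x'),h(y))+d(h(y),y)<\eps. \]
The same argument applied to central leaves yields minimality of $\F^c_g$.

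The main obstacle is not the density computation above but the two supporting facts about $h$: the leaf-to-leaf correspondence and the $C^0$ estimate. The correspondence for strong unstable leaves is standard, but for central leaves one needs the specific feature that $g_{B,k}$ preserves the splitting $E^s_B\oplus E^c_B$, so that the fibers of $h$ collapse along the central direction rather than arbitrarily inside center-stable leaves. The $C^0$ estimate is obtained via a contraction-mapping argument on the conjugacy equation in $C^0(\T^3,\T^3)$, adapted to the partially hyperbolic setting; carrying this out cleanly, together with verifying the leaf correspondences, is the technical content of Section \ref{secsemiup}, after which minimality follows from the calculation above.
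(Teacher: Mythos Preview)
Your minimality arguments via the semiconjugacy are essentially correct and, for $\F^u_g$, give a cleaner route than the paper's. The paper proves minimality of $\F^u_g$ by a direct dynamical argument in the spirit of \cite{PS}: it locates, in every stable arc of length one, a point whose backward $g^{n_0}$-orbit avoids $B(p,2\rho)$, and uses this to force contraction of center-stable discs so that every unstable leaf must meet every open set. Your approach instead pulls minimality back from $\F^u_B$ through the leaf identity $h(\F^u_g(x))=\F^u_B(h(x))$ and the bound $\|h-\mathrm{id}\|_{C^0}\le C\sqrt{k}$; since the paper establishes both of these (Theorem~\ref{semi} and Theorem~\ref{foliaciones}) without using minimality, your argument is legitimate and shorter. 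For $\F^c_g$ your argument is essentially the paper's own (Corollary~\ref{cmini}), just phrased via the $C^0$ estimate rather than via injectivity of $h$ on transversals.

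The genuine gap is in your unique integrability step. The assertion that ``line fields on surfaces integrate uniquely to $C^0$ foliations by elementary ODE theory'' is false: a merely continuous line field on a surface need not have unique integral curves (e.g.\ the planar field spanned by $(1,3y^{2/3})$). The bundle $E^c_g$ is only H\"older, not Lipschitz, so Picard--Lindel\"of does not apply, and restricting to the $2$-dimensional center-stable leaves does not help. This is precisely why the paper argues differently: it works in the universal cover, uses that the lifted strong foliations $\tilde\F^s_G,\tilde\F^u_G$ are quasi-isometric, and shows that a failure of unique integrability would produce two points $z,w$ joined both by a central arc and by a concatenation of stable and unstable arcs, leading to incompatible exponential growth rates for $d(G^n(z),G^n(w))$. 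Nothing in your outline supplies a substitute for this step, and without it you do not even have a foliation $\F^c_g$ on which to run the semiconjugacy argument. For $g=g_{B,k}$ itself the situation is no better: although $E^{cs}_{g_{B,k}}=E^s_B\oplus E^c_B$ is linear, the individual bundle $E^c_{g_{B,k}}$ inside it is still obtained by cone-field iteration and has no a priori Lipschitz regularity.
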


The minimality of $\F^u_g$ can be obtained from \cite{PS} and the
minimality of $\F^c_g$ will follows from the semiconjugacy with the
linear Anosov map.
We are going to give a complete proof of the theorem in Sections \ref{integrability},
\ref{secmini} (see Theorem \ref{uminimal}) and \ref{secsemiup} (see
Corollary \ref{cmini}).

Since every $g\in \mathcal U(g_{B,k})$ is transitive (this follows from the minimality of $\F^u_g$) and have periodic  points of different indices then it follows that the set of diffeomorphisms having a nonhyperbolic periodic point  is dense in $\mathcal U(g_{B,k})$ (see \cite{M2}, \cite{A} and \cite{H2}). We have the following

\begin{cor}\label{homo}
Let $k$ and $\mathcal U(g_{B,k})$ be as in the above theorem. Then,
there exists $g\in\mathcal U(g_{B,k})$ of class $C^\infty$ such that
\begin{enumerate}
\item \label{transverse} $g$ has a
transversal homoclinic point associated to a periodic point of unstable index $2.$
\item There exists a non trivial arc $J$ such that, for some $m>0\, g^m_{/J}=id_{/J}$, that is, $J$ consists of periodic points of $g$ of the same period $m.$
\end{enumerate}

\end{cor}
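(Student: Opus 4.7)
\emph{Proof plan.} The plan is to build $g$ as a $C^\infty$-small perturbation of $g_{B,k}$ assembled from two localized modifications, carried out in disjoint regions of $\T^3$ so they do not interfere: the first modification produces the arc of periodic points, and the second produces the transverse homoclinic. Both modifications must stay inside the open set $\mathcal U(g_{B,k})$ so that Proposition \ref{DA} and Theorem \ref{folmin} remain available throughout.

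For the arc, I would use the density statement immediately preceding the corollary (based on \cite{M2}, \cite{A}, \cite{H2}) to produce a $g_1 \in \mathcal U(g_{B,k})$, which may be taken of class $C^\infty$ by smoothing, having a nonhyperbolic periodic point $q$ of some period $m$. Proposition \ref{DA} forces $E^s_{g_1}$ and $E^u_{g_1}$ to be uniformly hyperbolic, so the neutral eigenvalue must lie in the central direction: $dg_1^m(q)|_{E^c_{g_1}(q)} = \pm 1$; replacing $m$ by $2m$ if needed, I may assume it equals $+1$. Working in a flow-box of $\F^c_{g_1}$ around $q$ and in local coordinates adapted to the splitting $E^s_{g_1} \oplus E^c_{g_1} \oplus E^u_{g_1}$, I would apply a $C^\infty$ bump perturbation supported in a small tubular neighborhood of the $g_1$-orbit of $q$, acting only in the central direction, designed so that the resulting map $g_2$ satisfies $g_2^m|_J = \mathrm{id}_J$ on a short subarc $J$ of the central leaf through $q$. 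The amplitude is arbitrarily $C^\infty$-small, so $g_2 \in \mathcal U(g_{B,k})$.

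For the transverse homoclinic, note that the hyperbolic fixed point $p$ of $g_{B,k}$ has unstable index $2$ (since $\lambda_c + \beta_k(0) > 1$ by Lemma \ref{mod}) and persists under $C^1$-small perturbations, so $g_2$ has a continuation $p_{g_2}$ of unstable index $2$ with $\dim W^s(p_{g_2}) = 1$ and $\dim W^u(p_{g_2}) = 2$. By Theorem \ref{folmin} the strong unstable leaf $W^{uu}(p_{g_2}) \subset W^u(p_{g_2})$ is dense in $\T^3$, so $W^u(p_{g_2})$ accumulates on every point of $W^s(p_{g_2}) \setminus \{p_{g_2}\}$. Applying Hayashi's $C^1$-connecting lemma via a $C^\infty$-small perturbation localized in a ball disjoint from $J$, from the orbit of $J$, and from a neighborhood of $p_{g_2}$, I obtain $g_3 \in \mathcal U(g_{B,k})$ with a point $x \in W^u(p_{g_3}) \cap W^s(p_{g_3}) \setminus \{p_{g_3}\}$. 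A final localized $C^\infty$-small Kupka--Smale-type perturbation makes the intersection at $x$ transverse without touching $J$; the resulting $g = g_3$ is $C^\infty$, belongs to $\mathcal U(g_{B,k})$, and satisfies both items.

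The hardest part I expect is the flattening step: producing a genuine interval of fixed points of $g_2^m$ (rather than a higher-order degeneracy at $q$) through a $C^\infty$-small perturbation that respects the partial hyperbolicity and the invariant splitting used in Proposition \ref{DA}. The bump has to act in the central direction, be consistently designed along all the points of the orbit of $q$ (not merely at $q$), and have small enough $C^1$-norm to keep the cones of Proposition \ref{DA} valid. Once this is achieved the second perturbation is standard, since its support can be chosen freely, far from the structures built in the first step.
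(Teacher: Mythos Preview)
Your proposal is essentially correct, but you reverse the order of the two perturbations relative to the paper, and this makes your life harder than it needs to be.

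The paper first uses Hayashi's connecting lemma to obtain $g_1\in\mathcal U(g_{B,k})$ with a \emph{transverse} homoclinic point associated to the index-$2$ fixed point $p$; transversality is a $C^1$-open condition, so every $g$ close enough to $g_1$ inherits it automatically. Only \emph{then} does the paper invoke the density of diffeomorphisms with a nonhyperbolic periodic point to find $g_2$ near $g_1$ with such a point $q$, and finally flattens along the center to get the arc $J$. Because the homoclinic was built first and is robust, neither of these last two arbitrarily small perturbations can destroy it; no localization bookkeeping is needed.

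By contrast, you build the fragile object (the arc $J$, which is destroyed by a generic perturbation) first, and must then thread the connecting-lemma perturbation and a Kupka--Smale adjustment through balls disjoint from the entire $g_2$-orbit of $J$ and from a neighborhood of $p$. This is feasible---the orbit of $J$ is a finite union of short arcs, $W^{uu}(p)$ is dense so one can choose the near-intersection point away from that orbit, and there are localized versions of the connecting lemma---but it requires you to justify that the support of each perturbation can be confined as claimed, and to deal with the fact that the connecting lemma is a $C^1$ statement while you want $C^\infty$-small modifications. The paper's order sidesteps all of this: robust first, fragile last.
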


\begin{proof}
Notice that for $g_{B,k}$ the fixed point $p=\pi(0)$ has unstable
index equals to $2$ since $dg_{B,k/E^c}=\la_c+\beta(0)>1.$  On the
other hand, since $\F^u_{g_{B,k}}(p)$ is dense (and hence
accumulates on $\F^s_{g_{B,k}}(p)$) by the Hayashi's connecting
lemma (see \cite{H1})) we can perturb $g_{B,k}$ (with support
disjoint from a ball at $p$) and find $g_1$ satisfying condition (\ref{transverse}). Any diffeomorphism close to $g_1$ will also satisfy (\ref{transverse}). Now, we may find a diffeomorphism arbitrarily close to $g_1$ having a non hyperbolic periodic point $q$ and so, by another arbitrarily small perturbation we can transform this nonhyperbolic periodic point $q$ into an arc $J$ of periodic points and find $g$ as in the statement.

\end{proof}

\subsection{Unique integrability of the bundle
$E^c_g.$}\label{integrability}

We first recall that a foliation $\F$ in $\R^3$ is quasi-isometric
if there exist $C,D$ positive numbers such that if $x,y$ belongs to
same leaf of the foliation, i.e. $y\in\F(x)=\F(y)$ then
$$d(x,y)\ge Cd_{\F}(x,y)-D$$
where $d_\F$ means the distance along the leaf of the foliation.

Denote by $\tilde{\F}^j_G, j=s,u$ the lifts to the universal cover
$\R^3$ of the stable and unstable foliations $\F^j_g, j=s,u$ for $g\in\mathcal U(g_{B,k}).$ These
foliation are quasi-isometric as we remarked before. In particular, this means that if we have two points $x,y$ in the same unstable leaf, then by future iteration, the rate of growth of $d(G^n(x),G^n(y))$ is the same as $d_{\F^u_G}(G^n(x),G^n(y)).$ And similarly in the past for points in the a stable leaf.

Now, assume by contradiction that the central bundle $E^c_g$ is not (locally) uniquely integrable (at some point, say $x$). This implies (see \cite{B}) that there exist two points $z,w$ such that (see Figure \ref{nonint})
\begin{itemize}
\item $z,w$ can be joined by a curve $J^c$ always tangent to $E^c_g.$
\item $z,w$ can be joined by union of two curves $J^s, J^u$ always tangent to $E^s_g$ and $E^u_g$ respectively (of course, one of them could be trivial).
\end{itemize}

\begin{figure}[ht]\begin{center}
\psfrag{w}{$w$}
\psfrag{x}{$x$}
\psfrag{z}{$z$}
\psfrag{Jc}{$J^c$}
\psfrag{Js}{$J^s$} \psfrag{Ju}{$J^u$}

\includegraphics[height=4cm]{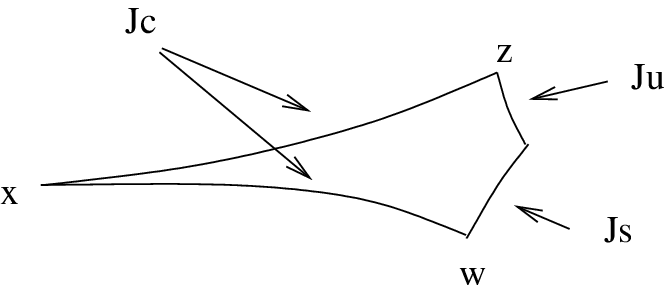}
\caption{}\label{nonint}
\end{center}\end{figure}

The same holds in the universal cover and we will argue there. If the curve $J^u$ is not trivial, then by future iteration we get that $d(G^n(z),G^n(w))$ grows at most with rate $\la_c(g)$ and on the other hand, by the quasi isometry of the unstable foliation we have that the rate of growth of $d(G^n(z),G^n(w))$ is the rate of growth of $G^n(J^u)$ (since the length of $G^n(J^s)$ decreases exponentially) which is at least $\la_u(g)$ which is bigger than $\la_c(g)$ (see remark \ref{epsilon}), a contradiction.

If $J^u$ is trivial, the argument is the same but more subtle and we have to do better estimations. Let $\epsilon$ and $\delta$ be small enough such that
$$\sigma:=(\la_s+\epsilon)^{-1}(1-\delta)>(\la_c-\epsilon)^{-1}$$
(recall that $\la_s,\la_c$ are eigenvalues of $B$). Now, choose $k,\rho$ and $\mathcal{U}(g_{B,k})$ small so that remark \ref{epsilon} applies and such that any curve tangent to $E^s_g$ of length at least one then the portion of it outside $B(p,\rho)$ is larger than $1-\delta.$

Now we are ready to go back to our situation of the points $z,w.$ Since $z,w$ are joined by a curve tangent to $E^c_g$ then,
$$d(G^{-n}(z),G^{-n}(w))\le (\la_c-\epsilon)^n\ell(J^c).$$
On the other hand, let $n_0$ be such that $G^{-n_0}(J^s)$ has length greater than one. Then
$$\ell(G^{-n}(G^{-n_0}(J^s))\ge (\la_s+\epsilon)^{-n}(1-\delta)^n=\sigma^n.$$
For $C$ and $D$ the constant of the quasi isometry of the stable foliation, for $n$ large enough we have that
$$C\sigma^n-D>(\la_c-\epsilon)^{n+n_0}\ell(J^c)$$
and so we get a contradiction:
\begin{eqnarray*}
d(G^{-n-n_0}(z),G^{-n-n_0}(w))&\le& (\la_c-\epsilon)^{n+n_0}\ell(J^c)\\&<& C\sigma^n-D\le C\ell(G^{-n}(G^{-n_0}(J^s))-D\\&\le& d(G^{-n-n_0}(z),G^{-n-n_0}(w))
\end{eqnarray*}

Thus, we have finished the proof of the unique integrability of $E^c$, i.e., the first part of Theorem \ref{folmin}.

\subsection{Minimality of the unstable foliation.}\label{secmini}

In this subsection we will prove that $\F^u_g$ is minimal for
$g\in\mathcal U(g_{B,k})$ for $k$ and $\mathcal U$ small enough. The
proof is based on the ideas and methods in \cite{PS}:

\begin{teo}\label{uminimal}
For all $k$ sufficiently small and $\mathcal U(g_{B,k})$
sufficiently small as well, the  unstable foliation $\F^u_g$ of
$g\in\mathcal U(g_{B,k})$ is minimal, i.e., all leaves are dense.
\end{teo}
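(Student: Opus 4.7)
The plan is to argue by contradiction following the strategy of \cite{PS}. Suppose $\F^u_g$ is not minimal. Applying Zorn's lemma to the family of nonempty compact $g$-invariant sets saturated by $\F^u_g$, one obtains a minimal element $\Lambda\subsetneq \T^3$; since $\F^u_g$ is a continuous one-dimensional foliation, the interior of $\Lambda$ is itself $g$-invariant and $u$-saturated, so the minimality of $\Lambda$ forces $\Lambda$ to have empty interior.

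The next step is to invoke the minimality of the center-stable foliation $\F^{cs}_g$, which follows, via the semiconjugacy with the linear Anosov $B$ built in Section \ref{secsemiup}, from the fact that $E^u_B$ has irrational direction and hence $\F^{s}_B$ is minimal. The set of $cs$-leaves meeting $\Lambda$ is then closed, nonempty and $g$-invariant; its $\F^{cs}_g$-saturation is a closed invariant $cs$-saturated set which, by minimality of $\F^{cs}_g$, must equal $\T^3$. Therefore $\Lambda$ intersects every center-stable leaf in a nonempty proper closed subset.

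The crux is to derive a contradiction from the configuration: $\Lambda\subsetneq \T^3$ is $u$-saturated, has empty interior, and meets every $cs$-leaf. I would pick a connected component $V$ of $\T^3\setminus\Lambda$, which is open and $u$-saturated, and consider $V\cap L$ for a fixed $cs$-leaf $L$: this is a nonempty open subset of $L$ whose boundary lies in $\Lambda\cap L$. The modification of Section \ref{secmane} makes $E^c$ expanding inside $B(p,\rho)$ (turning the fixed point $p$ into one of unstable index $2$), and the semiconjugacy forces every $\F^u_g$-leaf to visit $B(p,\rho)$ infinitely often. Iterating $V\cap L$ forward and using this intermittent center expansion, one would show that $V\cap L$ eventually fills the whole leaf $L$, contradicting $\Lambda\cap L\neq\emptyset$.

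The main obstacle is this last spreading argument. The subtlety is that the center expansion is non-uniform, occurring only inside $B(p,\rho)$: one must carefully control the proportion of iterates that orbits of $cs$-disks spend inside $B(p,\rho)$, combining this with the quasi-isometry of $\F^u_g$ in the universal cover to guarantee a net growth of $V\cap L$ in the $cs$-direction that overcomes possible shrinking outside the ball. This is precisely where the estimates of \cite{PS} on returns to expanding regions for partially hyperbolic invariant sets become essential.
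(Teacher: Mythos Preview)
Your contradiction setup (a minimal $u$-saturated compact $\Lambda\subsetneq\T^3$ with empty interior, meeting every $cs$-leaf) is a reasonable opening, and the intermediate claims can indeed be justified. The genuine gap is in your final ``spreading'' step, and it stems from a misidentification of the mechanism. You write that the modification makes $E^c$ expanding inside $B(p,\rho)$ and that one should exploit this intermittent center expansion under forward iteration to make $V\cap L$ fill up the $cs$-leaf $L$. But the center direction is contracting on the complement of the small ball (where $\|dg_{/E^c}\|\le\la_c(1+\ep)<1$) and expanding only near $p$; since $\rho$ is small, typical forward orbits spend almost all of their time \emph{outside} $B(p,\rho)$, so center arcs generically \emph{shrink} under forward iteration (this is exactly what is used later in Lemma~\ref{full}). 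Your proposed growth of $V\cap L$ in the center direction therefore goes in the wrong direction, and the estimates of \cite{PS} are not about returns to the expanding ball but about finding orbits that \emph{avoid} it.

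The paper's proof (which is the \cite{PS} argument specialized to this setting) is direct rather than by contradiction, and the key mechanism is the opposite of what you describe: one shows that any unit-length arc in $\F^s_g$ contains a point $z$ whose backward orbit under $g^{n_0}$ never enters $B(p,2\rho)$. For such $z$ the $cs$-disks $D^{cs}_g(g^{-jn_0}(z),\ep)$ stay disjoint from $B(p,\rho)$ and hence contract uniformly under $g^{n_0}$. Coupling this with the fact (inherited from the minimality of $\F^u_B$ for $k$ and $\mathcal U$ small) that every $\F^u_g$-leaf meets every $\ep$-sized $cs$-disk, one pulls back the target open set $U$ along $\F^s_g$ to a unit arc, picks the good point $z$, goes far enough into the past so that the $\ep$-disk there is guaranteed to hit $\F^u_g(g^{-N}(\xi))$, and then pushes forward: the uniform $cs$-contraction along the orbit of $z$ lands the intersection inside $U$. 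So the engine is uniform $cs$-contraction along selected orbits that stay in the ``good'' (linear) region, not center expansion in the perturbed ball.
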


\begin{proof}

 Recall that $0<\la_s<\la_c<1<\la_u$ are the eigenvalues of $B.$ Choose $\sigma, 1-(\la_c-\la_s)<\sigma<1.$ We may assume that $\rho$ (the radius of the ball centered at $p$ where the modification of $B$ is performed) is small so that any arc $I^s$ in $\F^s_B$ of length one has a subarc $I^s_1$ of length at least $1/3$ with empty intersection with $B(p,2\rho).$

Let $n_0$ so that
\begin{equation}\label{sigma}
\sigma^{-n_0}>3.
\end{equation}
Let $\epsilon, 0<\ep<\rho$ be such that $1-(\la_c-\la_s)+\ep<\sigma$
and
\begin{equation}\label{lambda}
\la:=\la_c(1+\ep)^{n_0}<1.
\end{equation}

Let us denote by $D^{cs}_g(x,\ep)$ a disc centered at $x$ and radius
$\ep$ in the central stable leaf through $x,$ $\F_g^{cs}(x).$

Now, we may assume that $k$ and $\mathcal U$ are so small so that
the following holds for $g\in\mathcal U(g_{B,k}):$

\begin{enumerate}\renewcommand{\theenumi}{\roman{enumi}}
\item $\la_s(g)<\sigma.$
\item \label{cs1}$\la_c(g)<1+\ep.$
\item \label{cs2} $\|dg_{/E^{cs}_g(\xi)}\|\le \la_c(1+\ep)$ if $\xi\notin B(p,\rho).$
\item Any arc $I^s$ of $\F^s_g$ of length at least one has a subarc $I^s_1$  of length at least $1/3$ with empty intersection with $B(p,2\rho)$.
\item \label{edense}Any leaf of $\F^u_g$ has nonempty intersection with $D^{cs}_g(x,\ep)$ for any $x$ (since $\F^u_B$ is minimal and for $k$ and $\mathcal U$ small the bundles $E^u_B$ and $E^u_g$ are close).
\end{enumerate}

Given $x\in\T^{3}$ let $I^s(x)$ an arc of length one such that $x\in
I^s(x)\subset \F^s_g(x).$ We know that there exists a subarc $I^s_1$
of length at least $1/3$ such that $I_1^s\cap B(p,2\rho)=\emptyset.$
Now, by \eqref{sigma}, we conclude that $g^{-n_o}(I^s_1)\subset
\F^s_g(g^{-n_0}(x)$ is an arc of length at least one. Therefore,
there exists a subarc $I^s_2\subset g^{-n_o}(I^s_1)$ of length at
least $1/3$ such that $I^s_2\cap B(p,2\rho)=\emptyset.$ Arguing by
induction, we conclude that for each $j\ge 1$ there exists
$I_{j+1}^s\subset g^{-n_0}(I^s_j)$ such that $I_{j+1}^s\cap
B(p,2\rho)=\emptyset.$

Define
$$z_x=\bigcap_{j\ge 1}g^{jn_0}(I_{j+1}).$$
Notice that
\begin{equation}\label{puntofuera}
z_x\in I^s(x)\,\,\,\mbox{ and }\,\,g^{-jn_0}(z_x)\notin
B(p,2\rho)\,\,\forall \,\,\, j\ge 0.
\end{equation}

In other words, in any arc of length one on any leaf of $\F^s_g$
there exists a point whose $g^{n_0}$-backward orbit never meets
$B(p,2\rho).$ Let $z=z_x$ be such a point and let $j\ge 1.$ Then we
have that $$D^{cs}_g(g^{-jn_0}(z),\ep)\cap B(p,\rho)=\emptyset$$ and
so, for any $y\in D^{cs}_g(g^{-jn_0}(z),\ep)$ we have that, by
\eqref{lambda}, \eqref{cs1} and \eqref{cs2}
$$\|dg^{n_0}_y\|\le \la_c(1+\ep)^{n_0}=\la<1$$
and therefore
\begin{equation}\label{csiterado}
g^{n_0}(D^{cs}_g(g^{-jn_0}(z),\ep))\subset
D^{cs}_g(g^{-(j-1)n_0}(z),\la \ep)
\end{equation}
and so, for any $1\le m\le j$ we have
$$g^{mn_0}(D^{cs}_g(g^{-jn_0}(z),\ep))\subset D^{cs}_g(g^{-(j-m)n_0}(z),\la^m \ep).$$

Now, we are ready to conclude the proof of the minimality of
$\F^u_g$ (for the argument see Figure \ref{arg}). Let $\xi\in\T^{3}$
and $U$ some open set in $\T^{3}$. We want to prove that
$$\F^u_g(\xi)\cap U\neq\emptyset.$$  Let $y\in U$, and  consider an
arc $J_y\subset F^s_g(y), J_y\subset U.$  There exists $m_0$
 so that $g^{-m_0}(J_y)$  has length grater than one. Let $z\in g^{-m_0}(J_y)$ the point constructed above,  and let $\mu$ be such that
\begin{equation}\label{mu}
g^{m_0}(D^{cs}_g(z,\mu))\subset U.
\end{equation}
Let $m_1$ be such that $\la^{m_1}\ep<\mu.$ From \eqref{csiterado} we
conclude  that
$$g^{m_1n_0}(D^{cs}_g(g^{-m_1n_0}(z),\ep)\subset D^{cs}_g(z,\mu).$$

\begin{figure}[ht]\begin{center}

\psfrag{U}{$U$} \psfrag{z}{$z$} \psfrag{Dz}{$D^{cs}(z,\mu)$}
\psfrag{gm0}{$g_{m_0}$} \psfrag{gm1n0}{$g^{m_1n_0}$}
\psfrag{Dcm1n0}{$D^{cs}(g^{-m_1n_0}(z),\ep)$}
\psfrag{Fs}{$\F^u_g(g^{-m_1n_0-m_0}(\xi))$}

\includegraphics[height=8cm]{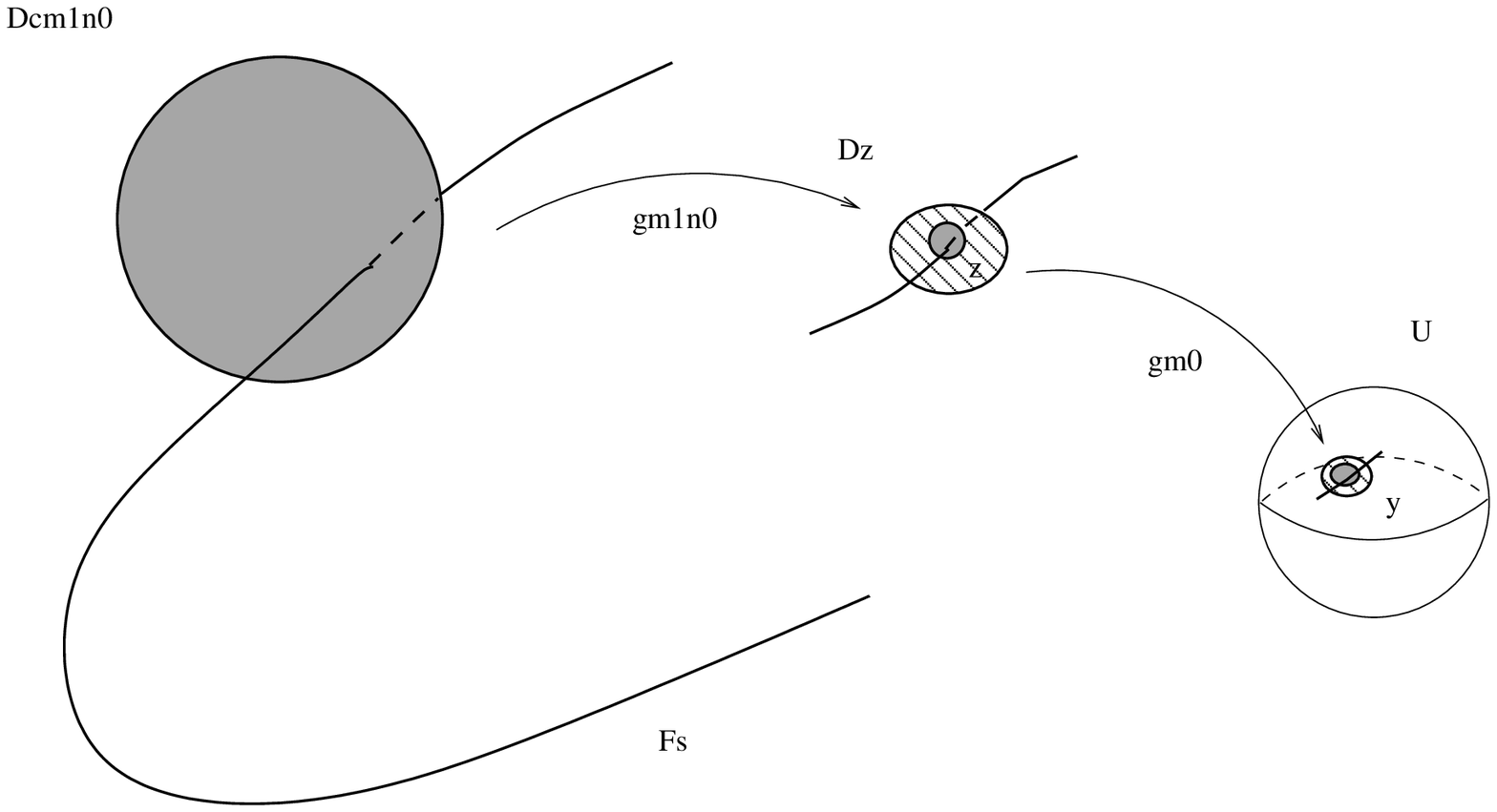}
\caption{}\label{arg}
\end{center}\end{figure}

On the other hand, from \eqref{edense} we know that
$\F^u_g(g^{-m_1n_0-m_0}(\xi))\cap
D^{cs}_g(g^{-m_1n_0}(z),\ep)\neq\emptyset.$ Using \eqref{mu},
iterating $m_1n_0+m_0$ times we conclude that
$$\F^u_g(\xi)\cap U\neq\emptyset$$
as we wished. This completes the proof of the minimality of $\F^u_g$
for $g\in \mathcal U(g_{B,k})$ with $k$ and $\mathcal U$ small
enough.
\end{proof}

\subsection{Semiconjugacy with  the linear Anosov System.}\label{secsemiup}

In this subsection we establish  a well known result about the
semiconjugacy of any map isotopic to an Anosov map on the torus (see
for instance  \cite{S}) and also we derive some consequence of it.
Indeed, we establish it in the universal cover $\R^3.$

\begin{teo}\label{semi} Let $B:\R^3\to \R^3$ be a linear hyperbolic
isomorphism. Then, there exists $C>0$ such that if $G:\R^3\to\R^3$
is homoemorphism such that $\sup\{\|G(x)-Bx\|:x\in\R^3\}=K<\infty$
then there exists $H:\R^3\to\R^3$ continuous and onto such that:
\begin{enumerate}
\item $B\circ H= H\circ G.$
\item $\|H(x)-x\|\le CK$ for all $x\in\R^3.$
\item $H(x)$ is characterized as the unique point $y$ such that
$$\|B^n(y)-G^n(x)\|\le CK \,\,\,\forall \,\,\,n\in\Z.$$
\item $H(x)=H(y)$ if and only if $\|G^n(x)-G^n(y)\|\le 2CK \,\forall\,n\in\Z$ and if and only if $\sup_{n\in\Z}\{\|G^n(x)-G^n(y)\|\}<\infty.$
\item If $B\in SL(3,\Z)$ and $G$ is the lift of $g:\T^3\to\T^3$ then
$H$ induces $h:\T^3\to\T^3$ continuous and onto such that $B\circ
h=h\circ g$ and $dist_{C^0}(h,id)\le C dist_{C^0}(B,g).$
\end{enumerate}

\end{teo}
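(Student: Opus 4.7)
The plan is to run the classical Franks--Manning shadowing argument for a hyperbolic linear automorphism. Write $G=B+\f$ with $\f:\R^{3}\to\R^{3}$ continuous and $\|\f\|_{\infty}\le K$, and look for $H$ of the form $H=\mathrm{id}+u$ with $u:\R^{3}\to\R^{3}$ bounded and continuous. Substituting into $B\circ H=H\circ G$ and cancelling the linear part reduces the problem to the cohomological equation
\begin{equation*}
Bu(x)-u(G(x))=\f(x),\qquad x\in\R^{3}.
\end{equation*}

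To solve it, decompose $\R^{3}=E^{s}\oplus E^{u}$ along the hyperbolic splitting of $B$ (grouping the two contracting eigenspaces of $B$ into $E^{s}$ and the expanding one into $E^{u}$), split $u=u^{s}+u^{u}$ and $\f=\f^{s}+\f^{u}$ accordingly, and solve each component by telescoping. For the contracting part I would iterate backwards, obtaining
\begin{equation*}
u^{s}(x)=-\sum_{k\ge 1}B^{k-1}\f^{s}(G^{-k}(x)),
\end{equation*}
which converges uniformly because $\|B|_{E^{s}}\|<1$; for the expanding part I would iterate forwards,
\begin{equation*}
u^{u}(x)=\sum_{k\ge 0}B^{-(k+1)}\f^{u}(G^{k}(x)),
\end{equation*}
which converges uniformly because $\|B^{-1}|_{E^{u}}\|<1$. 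Summing the two geometric bounds gives $\|u\|_{\infty}\le CK$ for a constant $C$ depending only on $B$, which is item~(2), and continuity of $u$ is inherited from continuity of $G$ via uniform convergence. By construction $H=\mathrm{id}+u$ satisfies item~(1).

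For the characterisation (3), suppose $y\in\R^{3}$ satisfies $\|B^{n}(y)-G^{n}(x)\|\le CK$ for every $n\in\Z$. Combining with (2) applied along the $G$-orbit of $x$ gives $\|B^{n}(y)-B^{n}H(x)\|\le 2CK$ for all $n$, and since $B$ is hyperbolic the only vector with $B$-orbit bounded in both time directions is zero, hence $y=H(x)$. Item~(4) is then a direct triangle-inequality consequence: $H(x)=H(y)$ implies $\|G^{n}(x)-G^{n}(y)\|\le 2CK$ via (3); conversely, if $\sup_{n}\|G^{n}(x)-G^{n}(y)\|<\infty$ then $\sup_{n}\|B^{n}H(x)-B^{n}H(y)\|<\infty$, and hyperbolicity of $B$ again forces $H(x)=H(y)$.

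It remains to establish surjectivity and the torus quotient in (5). Since $\|H-\mathrm{id}\|_{\infty}\le CK$, the map $H$ is proper, and the linear homotopy $H_{t}=\mathrm{id}+tu$ to the identity shows it has degree one, so $H$ is onto $\R^{3}$. When $B\in SL(3,\Z)$ and $G$ is the lift of $g:\T^{3}\to\T^{3}$, the displacement $\f=G-B$ is $\Z^{3}$-periodic, and inspection of the series shows the same for $u$; thus $H$ commutes with integer translations and descends to a continuous, surjective $h:\T^{3}\to\T^{3}$ with $B\circ h=h\circ g$ and the asserted $C^{0}$-estimate. The only delicate point in the whole argument is ensuring that the two telescoping series are controlled by a single constant $C=C(B)$ independent of $G$, so that the final bound $\mathrm{dist}_{C^{0}}(h,\mathrm{id})\le C\,\mathrm{dist}_{C^{0}}(B,g)$ is uniform across the $C^{1}$-neighbourhood $\cU(g_{B,k})$ used later in the paper.
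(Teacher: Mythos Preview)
Your argument is correct and is precisely the classical Franks--Manning shadowing proof. Note, however, that the paper does not actually supply a proof of this theorem: it introduces it as ``a well known result about the semiconjugacy of any map isotopic to an Anosov map on the torus'' and simply refers to \cite{S}. So there is no ``paper's own proof'' to compare against; what you have written is the standard argument one finds in the cited reference, and every step (the cohomological reduction, the two telescoping series along the hyperbolic splitting, the uniqueness via bounded $B$-orbits, the degree-one surjectivity, and the $\Z^{3}$-equivariance for the torus quotient) is in order.

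One small remark: when you write $\|B|_{E^{s}}\|<1$ and $\|B^{-1}|_{E^{u}}\|<1$, this is literally true only in an adapted norm (or, in the paper's specific setting, in the Euclidean metric they choose making the eigenspaces orthogonal). For a general hyperbolic $B$ in the Euclidean norm one has only $\|B^{n}|_{E^{s}}\|\le C_{0}\lambda^{n}$ for some $C_{0}\ge 1$ and $\lambda<1$; this changes nothing structurally, but the extra constant $C_{0}$ gets absorbed into the final $C=C(B)$, which is exactly the point you flag at the end.
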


We will prove some consequence of the above theorem to our $B\in
SL(3,\Z)$ and our construction of Ma\~{n}e's derived from Anosov
diffeomorphism $g_{B,k}:\T^3\to\T^3$ and any $g\in\mathcal
U(g_{B,k}).$ Let $G:\R^3\to \R^3$ the lift of $g$ such that
$\sup\{\|G(x)-Bx\|:x\in\R^3\}=dist_{C^0}(B,g)$ (that we may assume
that is less than $\sqrt{k}).$ Denote by $\tilde{\F}^j, j=s,c,u, cs,
cu$ the lift of the stable, central, ustable, central stable and
central unstable foliation respectively.

\begin{teo}\label{foliaciones}
With the above notations we have:
\begin{enumerate}

\item  $H(\tilde{\mathcal{F}}_G^{cu}(x))=\tilde{\F}^{cu}_B(H(x))$ and $H(\tilde{\F}_G^{cs}(x))=\tilde{\F}^{cs}_B(H(x))$
\item $H(\tilde{\F}_G^{c}(x))=\tilde{\F}^{c}_B(H(x)).$
\item $H(\tilde{\F}^u_G(x))=\tilde{\F}^u_B(H(x))=H(x)+E^u_B$ and
$H:\tilde{\F}^u_G(x)\to\tilde{\F}^u_B(H(x))$ is a homeomorphism.
\item For any $x,y\in\R^3$ hold
$$\#\{\tilde{\F}^{cs}_G(x)\cap
\tilde{\F}^u_G(y)\}=1\,\,\,\,\mbox{ and
}\,\,\,\,\#\{\tilde{\F}^{cu}_G(x)\cap \tilde{\F}^s_G(y)\}=1.$$
\end{enumerate}
\end{teo}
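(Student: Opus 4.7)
The plan is to combine the eigenvalue inequalities of Remark \ref{epsilon} with the bounded-difference characterization of $H$ in Theorem \ref{semi}, proving item 3 first, then the inclusions in items 1 and 2, then upgrading to equalities, and finally deducing item 4. The key ancillary fact, derived from $B\circ H=H\circ G$ and $\|H-id\|\le CK$, is the two-sided inequality $\|G^nx-G^ny\|-2CK\le \|B^n(Hy-Hx)\|\le \|G^nx-G^ny\|+2CK$.

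Item 3 is the cleanest. For $y\in\tilde{\F}^u_G(x)$ the leaf distance $d_{\tilde{\F}^u_G}(G^{-n}x,G^{-n}y)$ tends to zero exponentially, so the ambient distance does too; the above inequality then gives $\|B^{-n}(Hy-Hx)\|$ bounded as $n\to\infty$. Decomposing $v=Hy-Hx$ in $E^s_B\oplus E^c_B\oplus E^u_B$, boundedness of $\|B^{-n}v\|\ge\lambda_s^{-n}|v^s|+\lambda_c^{-n}|v^c|$ forces $v^s=v^c=0$, so $Hy\in Hx+E^u_B$. Injectivity of $H$ on the $u$-leaf is the contrapositive via forward quasi-isometry and $\lambda_u(g)>1$, and properness (again by quasi-isometry, $\|H\gamma(t)-Hx\|\ge\|\gamma(t)-x\|-2CK\to\infty$) upgrades the continuous injection into a homeomorphism onto the affine line $Hx+E^u_B$.

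For the inclusions in items 1 and 2 I would use a direct length estimate. For $y\in\tilde{\F}^{cs}_G(x)$ and any path $\gamma$ in the cs-leaf from $x$ to $y$, $\ell(G^n\gamma)\le\lambda_c(g)^n\ell(\gamma)$ gives $\|B^n(Hy-Hx)\|\le\lambda_c(g)^n\ell(\gamma)+2CK$. Since $\|B^nv\|\ge\lambda_u^n|v^u|$ and $\lambda_u>\lambda_c(g)$ by Remark \ref{epsilon}(5)--(6), this forces $v^u=0$, i.e., $Hy\in Hx+E^{cs}_B$. The cu-case is the backward analog using $\lambda_s<\min_\xi\lambda_c(g)(\xi)$ from Remark \ref{epsilon}(2). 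For item 2, applying both estimates to $y\in\tilde{\F}^c_G(x)$ kills $v^s$ and $v^u$, placing $Hy\in Hx+E^c_B$. To upgrade to equalities, observe that the center leaf of $G$ is quasi-isometric (its tangent $E^c_G$ sits in a narrow cone around $E^c_B$), so $H$ restricted to it maps continuously onto a connected subset of $Hx+E^c_B$ that is unbounded on both sides, hence the full line (item 2); combining this with the saturation $\tilde{\F}^{cs}_G(x)=\bigcup_{y\in\tilde{\F}^c_G(x)}\tilde{\F}^s_G(y)$ and a properness/transversality argument (using that the cs-leaf is also a quasi-isometric surface, by the same cone argument) yields item 1, and symmetrically for cu.

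Item 4 then follows. Any $z\in\tilde{\F}^{cs}_G(x)\cap\tilde{\F}^u_G(y)$ must have $Hz$ in the singleton $(Hx+E^{cs}_B)\cap(Hy+E^u_B)=\{w\}$, and injectivity of $H$ on $\tilde{\F}^u_G(y)$ gives uniqueness. For existence, the surjections in items 1 and 3 produce $z_{cs}\in\tilde{\F}^{cs}_G(x)$ and $z_u\in\tilde{\F}^u_G(y)$ with $Hz_{cs}=Hz_u=w$; the bounded-orbit characterization of $H$-fibers combined with items 1--2 confines each fiber to a single center leaf of $G$, so $z_{cs}$ and $z_u$ share a cs-leaf, giving $z_u\in\tilde{\F}^{cs}_G(x)$. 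The main technical obstacle throughout is this surjectivity/matching: the characterization of $H$ only gives the inclusion direction, and both upgrades rest on squeezing extra information from the cone conditions and the $CK$-closeness of $H$ to the identity.
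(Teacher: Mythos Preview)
Your overall strategy matches the paper's: growth-rate comparisons (via the two-sided inequality coming from $\|H-id\|\le CK$) give the inclusions, and the cone conditions force each leaf to be a graph over the corresponding $B$-eigenspace, which yields surjectivity. Your treatment of item 3 and of the inclusions in items 1--2 is essentially the paper's argument, reordered.

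There is, however, a genuine gap in your proof of existence in item 4. You produce $z_{cs}\in\tilde{\F}^{cs}_G(x)$ and $z_u\in\tilde{\F}^u_G(y)$ with $Hz_{cs}=Hz_u$ and then assert that ``the bounded-orbit characterization of $H$-fibers combined with items 1--2 confines each fiber to a single center leaf of $G$''. That last claim is precisely Corollary \ref{puntosequivalentes} in the paper, and its proof \emph{uses} item 4 (one takes the intersection point $\tilde{\F}^{cs}_G(a)\cap\tilde{\F}^u_G(b)$ to derive a contradiction). Items 1 and 2 alone do not give it: knowing $H(\tilde{\F}^{cu}_G(z_{cs}))=H(\tilde{\F}^{cu}_G(z_u))$ as subsets of $\R^3$ does not force the two $cu$-leaves to coincide. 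So your existence argument is circular as written.

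The fix is immediate and is what the paper does: you have already argued (for the surjectivity in item 1) that $\tilde{\F}^{cs}_G(x)$ is a bounded graph $E^{cs}_B\to E^u_B$, and likewise $\tilde{\F}^u_G(y)$ is a bounded graph $E^u_B\to E^{cs}_B$. Two such graphs in $\R^3=E^{cs}_B\oplus E^u_B$ must intersect, and your uniqueness argument already shows the intersection is a single point. No detour through $H$-fibers is needed.
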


\begin{proof} For the first item we only prove that $H(\tilde{\F}_G^{cu}(x))=\tilde{\F}^{cu}_B(H(x))$, the other one
 is similar.
Let us prove first that
$H(\tilde{\F}_G^{cu}(x))\subset\F^{cu}_B(H(x))=H(x)+E^{cu}_B.$
Arguing by contradiction, assume that there exists $y\in
\tilde{\F}_G^{cu}(x)$ such that $H(y)\notin \tilde{\F}^{cu}_B(H(x))$
let $z=\tilde{\F}^s_B(H(y))\cap \tilde{\F}^{cu}_B(H(x)).$ By
backward iteration we have that
\begin{eqnarray*}
\|B^{-n}(H(y))-B^{-n}(H(x))\| &\ge & \|B^{-n}(H(y))-B^{-n}(H(z))\|-\|B^{-n}(H(z))-B^{-n}(H(x))\|\\
&\ge& \la_s^{-n}\|H(y)-H(z)\| -\la_c^{-n}\|H(z)-H(x)\|
\end{eqnarray*}
On the other hand,  since $y\in \tilde{\F}^{cu}_G(x)$ and (for $k$
and $\mathcal U$ small) $\|dG^{-1}_{/E^{cu}_G}\|\le
(\la_c-\ep)^{-1}$ we have
\begin{eqnarray*}
\|B^{-n}(H(x))-B^{-n}(H(y))\| &\le & \|B^{-n}(H(x))-G^{-n}(x)\| \\&
&+\, \|G^{-n}(x)-G^{-n}(y)\|\\ & & + \,\|B^{-n}(H(y))-G^{-n}(y)\|
\\&\le& 2C\sqrt{k}
+(\la_c-\ep)^{-n}dist_{\tilde{\F}^{cu}_G(x)}(x,y).
\end{eqnarray*}
For $n$ large enough we arrive to a contradiction with the previous
equation.

Now, since $\|H-Id\|\le C\sqrt{k}$ we have:
\begin{itemize}
\item $\tilde{\F}^{cu}_G(x)\subset \{z: dist_{\R^3}(z, H(x)+ E^{cu}_B)\le C\sqrt{k}\}$ (that is, roughly speaking, $\tilde{\F}^{cu}_G$ is a surface in a sandwich of size $C\sqrt{k}$ with central slice the plane $H(x)+ E^{cu}_B.$ See Figure \ref{fcu}.
    \item $\tilde{\F}^{cu}_G(x)$ is transversal to $E^s_B.$
    \item $\tilde{\F}^{cu}_G(x)$ is a complete manifold
\end{itemize}
and  it is not difficult to see that $\tilde{\F}^{cu}_G(x)$ is a
graph of a map $E^{cu}_B\to E^s_B$. Then, since $\|H-Id\|\le C\rho$
is follows that $H:\tilde{\F}^{cu}_G(x)\to \tilde{\F}^{cu}_B(H(x))$
is onto.

\begin{figure}[ht]\begin{center}

\psfrag{fcu}{$\tilde{\F}^{cu}_G(x)$} \psfrag{H1}{$H(x)-C\rho e_s+
E^{cu}_B$} \psfrag{H2}{$H(x)+C\rho e_s+ E^{cu}_B$}

\includegraphics[height=5cm]{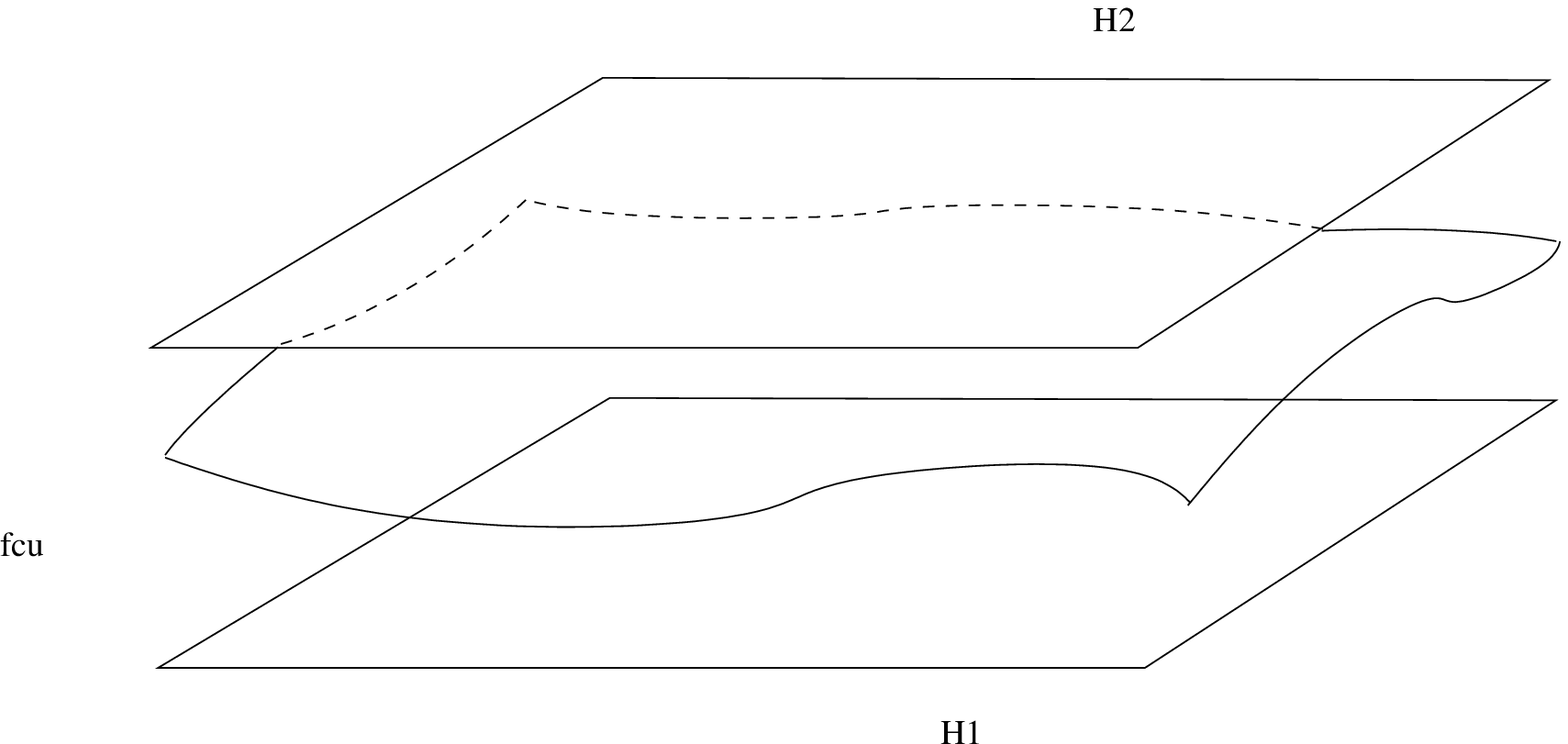}
\caption{The $\F^{cu}_G$ leaf}\label{fcu}
\end{center}\end{figure}

Let us prove the second item. From the first one it follows that:
$$H(\tilde{\F}^c_G(x))=H(\tilde{\F}^{cs}_G(x)\cap \tilde{\F}^{cu}_G(x))\subset \tilde{\F}^{cs}_B(H(x))\cap \tilde{\F}^{cu}_B(H(x))=\tilde{\F}^{c}_B(H(x)).$$
Since $\|H-Id\|\le C\sqrt{k}$ we have that $\tilde{\F}^c_G(x)$ ia in
a cylinder of radius $C\sqrt{k}$ with axe
$H(x)+E^c_B=\tilde{\F}^c_B(H(x)).$ Since $E^c_G$ is in tiny cone
around $E^c_B$ we may assume that $E^c_B$ is always transversal to
$E^s_B\oplus E^u_B$ and moreover $\tilde{\F}^c_G(x)$ is the graph of
a map $E^c_B\to E^s_B\oplus E^u_B.$ Using again that $\|H-Id\|\le
C\sqrt{k}$ we conclude that $H:\tilde{\F}^c_G(x)\to
\tilde{\F}^c_B(H(x))$ is onto.

For the third item item observe also that
$H(\tilde{\F}^u_G(x))\subset \tilde{\F}^u_B(H(x))$ since  for $y\in
\tilde{\F}^u_G(x)$ we have that $\|G^n(y)-G^n(x)\|\to_{n\to -\infty}
0$ and hence the distance between $H(G^n(y))=B^n(H(y))$ and
$H(G^n(x))=B^n(H(x))$ is bounded for $n\le 0$ which implies that
$H(y)\in H(x)+E^u_B.$ By similar arguments as in the previous item
we have that $H:\tilde{\F}^u_G(x)\to \tilde{\F}^u_B(H(x))$ is onto.
On the other hand, $H_{/\tilde{\F}^u_G(x)}$ is injective: otherwise,
if for some $z,y\in\tilde{\F}^u_G(x)$ we have that $H(z)=H(y)$ by
forward iteration we have that $\|G^n(y)-G^n(z)\|$ goes to infinity
(recall that $\tilde{\F}^u_G$ is quasi isometric) and so
$\|H(G^n(y))-H(G^n(z))\|$ also goes to infinity by forward
iteration, this is imposible since
$H(G^n(y))=B^n(H(y))=B^n(H(z))=H(G^n(z)).$

For the fourth and last item observe that
$$\#\{\tilde{\F}^{cs}_G(x)\cap
\tilde{\F}^u_G(y)\}\le 1.$$ Otherwise, let $z,w\in
\tilde{\F}^{cs}_G(x)\cap \tilde{\F}^u_G(y)$ and iterating forward we
have (since $\tilde{\F}^u$ is quasi isometric) that
$\|G^n(z)-G^n(w)\|\sim dist_{\tilde{\F}^u}(G^n(z),G^n(w))$ which
grows with exponential rate $\sim\la_u.$ On the other hand, since
$z,w\in \tilde{\F}^{cs}$ the distance can grow at most with rate
$\la_c(g)<1+\ep<\la_u$ and we get a contradiction.

To see the intersection is nonempty just recall that
$\tilde{\F}^{cs}(x)$ is a graph of a (bounded) map $E^{cs}_B\to
E^u_B$ and $\tilde{\F}^u(y)$ is a graph of a (bounded) map $E^u_B\to
E^{cs}_B.$

The second part of this item is very similar to what we already
done. Nevertheless (for the very last argument) it worth to mention
that it is not true in general that
$H(\tilde{\F}^s_G(x))=\tilde{\F}^s_B(H(x))$, and so we can not be
sure that $\tilde{\F}^s_G(x)$ is at a bounded distance of
$H(x)+E^s_B$ but still it is not difficult to see (since $E^s_G$ is
in a tiny cone around $E^s_B$) that $\tilde{\F}^s_G(x)$ is the graph
of a map $E^s_B\to E^{cu}_B.$

\end{proof}

\begin{cor}\label{puntosequivalentes}
With the above notations, assume that  $H(x)=H(y).$ Then $x,y$
belongs to the same central leaf
$\tilde{\F}^c_G(x)=\tilde{\F}^c_G(y).$ Moreover, if we denote by
$[x,y]_c$ the central arc in $\tilde{\F}^c_G(x)$ with ends $x$ and
$y$ then $H([x,y]_c)=H(x)=H(y)$ and the diameter of $[x,y]_c$ is
bounded by $2C\sqrt{k}.$ In particular, for any $z$ we have that $H^{-1}(z)$ is
either a point or an arc.
\end{cor}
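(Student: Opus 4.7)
My plan is to prove the corollary in four conceptual steps: (i) establish $y\in\tilde{\F}^{cs}_G(x)$, (ii) the complementary inclusion $y\in\tilde{\F}^{cu}_G(x)$ so that $y\in\tilde{\F}^c_G(x)$, (iii) show that $H$ collapses the entire arc $[x,y]_c$ to $H(x)$ and derive the diameter bound, and (iv) deduce the point-or-arc dichotomy. All work takes place in the universal cover, with Theorem \ref{foliaciones} providing the foliation structure and Theorem \ref{semi}(4) turning bounded-orbit separation into equality of $H$-values.

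For (i), Theorem \ref{foliaciones}(4) gives a unique $w=\tilde{\F}^{cs}_G(x)\cap\tilde{\F}^u_G(y)$, and items (1) and (3) place $H(w)$ in $\tilde{\F}^{cs}_B(H(x))\cap\tilde{\F}^u_B(H(y))$, which is $\{H(x)\}$ since $H(x)=H(y)$; thus $H(w)=H(y)$, and injectivity of $H$ on $\tilde{\F}^u_G(y)$ (item (3)) forces $w=y$. For (ii), set $\{w'\}=\tilde{\F}^{cu}_G(x)\cap\tilde{\F}^s_G(y)$ and note that $w'\in\tilde{\F}^s_G(y)\subset\tilde{\F}^{cs}_G(y)=\tilde{\F}^{cs}_G(x)$, so $w'\in\tilde{\F}^{cs}_G(x)\cap\tilde{\F}^{cu}_G(x)=\tilde{\F}^c_G(x)$. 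If $w'\neq y$ then $y\in\tilde{\F}^s_G(w')\setminus\{w'\}$; quasi-isometry of $\tilde{\F}^s_G$ forces $\|G^{-n}(y)-G^{-n}(w')\|$ to grow like $\la_s(g)^{-n}$, while $w',x\in\tilde{\F}^c_G(x)$ together with the uniform graph structure of central leaves over $E^c_B$ (since $E^c_G$ lies in a narrow cone around $E^c_B$) makes $\|G^{-n}(w')-G^{-n}(x)\|$ grow at most like $(\la_c-\ep)^{-n}$. Combined with $\|G^{-n}(x)-G^{-n}(y)\|\le 2C\sqrt{k}$ from Theorem \ref{semi}(4), this violates the triangle inequality for large $n$, provided $B$ was chosen so that $\la_s<\la_c-\ep$ after the modification (which can always be arranged). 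Hence $w'=y$ and $y\in\tilde{\F}^c_G(x)$.

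For (iii), given $w\in[x,y]_c$, by Theorem \ref{semi}(4) it suffices to bound $\sup_n\|G^n(w)-G^n(x)\|$: since $G$ preserves the order along $\tilde{\F}^c_G$, $G^n(w)$ lies in the arc $G^n([x,y]_c)$, and by the same graph structure the $\R^3$-diameter of this arc is controlled by $\|G^n(x)-G^n(y)\|\le 2C\sqrt{k}$, giving a uniform bound. For the diameter estimate, given $a,b\in[x,y]_c$ we now have $H(a)=H(b)=H(x)$, so Theorem \ref{semi}(2) yields $\|a-b\|\le\|a-H(a)\|+\|H(b)-b\|\le 2C\sqrt{k}$. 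Step (iv) is immediate: the fiber $H^{-1}(z)$ sits inside a single central leaf (by (i)--(ii) applied to any two of its points), is arc-connected (by (iii)), and is closed and bounded, hence either a point or a closed arc in the $1$-manifold $\tilde{\F}^c_G$. The main obstacle is (ii): the asymmetry in Theorem \ref{foliaciones} (no clean image statement for $\tilde{\F}^s_G$, since $H$ does not in general map stable leaves of $G$ into stable leaves of $B$) prevents closing the argument at the topological level and forces the quantitative rate comparison above.
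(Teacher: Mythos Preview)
Your overall strategy matches the paper's: show $y\in\tilde{\F}^{cs}_G(x)$, then $y\in\tilde{\F}^{cu}_G(x)$, then deal with the arc and the diameter. Your step (i) is in fact cleaner than the paper's version: instead of a rate-comparison contradiction, you exploit directly that $H$ sends $\tilde{\F}^{cs}_G$ and $\tilde{\F}^u_G$ to the corresponding $B$-foliations and is injective on unstable leaves, so $w=y$ follows immediately. Steps (iii) and (iv) are fine and essentially identical to what the paper does with the projection $\Pi^{su}$.

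The genuine gap is in step (ii). You assert that backward separation along $\tilde{\F}^s_G$ grows like $\la_s(g)^{-n}$ and along $\tilde{\F}^c_G$ at most like $(\la_c-\ep)^{-n}$, and that the former dominates ``provided $B$ was chosen so that $\la_s<\la_c-\ep$ after the modification''. But the relevant quantity is $\la_s(g)=\max_\xi\|dg_{\xi/E^s_g}\|$, not the eigenvalue $\la_s$ of $B$. By Lemma~\ref{mod} one has $\beta(0)>1-\la_c$, so inside $B(p,\rho)$ the contraction along $E^s$ can be as weak as $\la_s+\beta(0)>1-(\la_c-\la_s)$; for the family $B_a$ actually used (where $\la_c(a)\to 0$) this is larger than $\la_c$, and hence $\la_s(g)^{-1}<(\la_c-\ep)^{-1}$. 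Your rate comparison therefore fails precisely in the regime of interest, and it cannot be repaired merely by choosing $B$.

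The paper glosses over this with ``In a similar way'', but the substance behind that phrase is the refined estimate already proved in Section~\ref{integrability} for the unique integrability of $E^c$: once a stable arc has length at least one, a fixed proportion $1-\delta$ of it lies outside $B(p,\rho)$, where the contraction rate is the honest $\la_s+\ep$, and this yields an effective backward growth rate $\sigma^{-1}=\big((\la_s+\ep)^{-1}(1-\delta)\big)$ satisfying $\sigma^{-1}>(\la_c-\ep)^{-1}$. Invoking that argument (or reproducing it) closes your step (ii).
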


\begin{proof}
Let $x,y$ be such that $H(x)=H(y)$. We claim that $y\in
\tilde{\F}^{cs}_G(x).$ Otherwise, from the last theorem we may
consider $z=\tilde{\F}^{cs}_G(x)\cap\tilde{\F}^{u}_G(y).$ By similar
arguments as before, since by forward iteration the distance between
$G^n(z)$ and $G^n(y)$ grows with a rate much higher than the one
between $G^n(z)$ and $G^n(x)$ could do, we conclude that
$$\|G^n(x)-G^n(y)\|\to_{n\to\infty}\infty.$$
This is impossible due to $H(G^n(y))=H(G^n(x))$ and so $G^n(z)$ and
$G^n(y)$ are at bounded distance for every $n.$

In a similar way we prove that $y\in \tilde{\F}^{cu}_G(x).$
Therefore
$$y\in \tilde{\F}^{cs}_G(x)\cap \tilde{\F}^{cu}_G(x)=\tilde{\F}^c_G(x).$$

Now, recall that $\tilde{\F}^c(z)$  is the graph of a map
$H(z)+E^c_B\to H(z)+E^s_B\oplus E^u_B$ and bounded by $C\sqrt{k}$
(in particular $\tilde{\F}^c(z)$ is quasi isometric) for any $z.$ We
shall denote by $\Pi^{su}:\R^3\to E^c_B$  the projection along
$E^s_B\oplus E^u_B.$

Now, if $w\in [x,y]_c$ it follows that for any $n$ that
$\Pi^{su}(G^n(x))<\Pi^{su}(G^n(w))<\Pi^{su}(G^n(y)).$ From this it
follows that $\sup_{n\in\Z}\{\|G^n(x)-G^n(y)\|\}<\infty$ and so
$H(x)=H(w).$ Finally, if $H(w)=H(z)$ then $\|z-w\|\le 2C\sqrt{k}.$

\end{proof}

Let us set the following notation: for $x\in\R^3$ let
$[x]=\{y\in\R^3: H(y)=H(x)\}=H^{-1}(H(x)).$ In other words $[x]$ is
the equivalent class or the equivalence relation $x\sim y$ if and
only if $H(x)=H(y).$ From the above lemma we have that $[x]$ is a
point or an arc contained in the central leaf $\F^c_G(x).$ In
particular from the fact $H:\tilde{F}^u_G(x)\to\tilde{F}^u_B(H(x))$
is a homeomorphism, we have (see Figure \ref{class}):

\begin{cor}\label{picclass}
Let $x\in \R^3$ and let $z\in \F^u_G(x). $ Then
\begin{equation}\label{uclass}
[z]=\left(\bigcup_{y\in[x]}\tilde{\F}^u_G(y)\right) \bigcap
\tilde{\F}^c_G(z).
\end{equation}
\end{cor}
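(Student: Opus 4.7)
The plan is to prove the two inclusions separately by transferring the set-theoretic description under $H$ into the Anosov base, where the foliations are affine and transversality is trivial. Both inclusions rely almost entirely on Theorem~\ref{foliaciones} together with Corollary~\ref{puntosequivalentes}; no new analytic estimates are needed.

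For the inclusion $\subseteq$, let $w \in [z]$, so $H(w) = H(z)$. Corollary~\ref{puntosequivalentes} gives $w \in \tilde{\F}^c_G(z)$, so it only remains to produce a $y \in [x]$ with $w \in \tilde{\F}^u_G(y)$. The natural candidate is found on the unstable leaf $\tilde{\F}^u_G(w)$: by item~(3) of Theorem~\ref{foliaciones}, $H$ maps $\tilde{\F}^u_G(w)$ homeomorphically onto $\tilde{\F}^u_B(H(w))$. Since $H(w)=H(z)$ and $z\in\tilde{\F}^u_G(x)$ forces $H(z)\in\tilde{\F}^u_B(H(x))$, we have $\tilde{\F}^u_B(H(w))=\tilde{\F}^u_B(H(x))$, so $H(x)$ lies in the image. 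Pulling $H(x)$ back through the homeomorphism $H|_{\tilde{\F}^u_G(w)}$ yields a unique $y\in \tilde{\F}^u_G(w)$ with $H(y)=H(x)$. Then $y\in[x]$ and $w\in\tilde{\F}^u_G(y)$, as required.

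For the inclusion $\supseteq$, take $y\in[x]$ and $w\in \tilde{\F}^u_G(y)\cap\tilde{\F}^c_G(z)$. Items~(2) and~(3) of Theorem~\ref{foliaciones} push these memberships down to the base:
$$H(w)\in \tilde{\F}^u_B(H(y))\cap\tilde{\F}^c_B(H(z)).$$
Because $H(y)=H(x)$ and $H(z)\in\tilde{\F}^u_B(H(x))$, the first factor equals $\tilde{\F}^u_B(H(z))$. Thus $H(w)$ lies in the intersection of the two affine lines $H(z)+E^u_B$ and $H(z)+E^c_B$, which is $\{H(z)\}$ since $E^u_B\cap E^c_B=\{0\}$. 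Hence $H(w)=H(z)$, i.e., $w\in[z]$.

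I do not expect any real obstacle: the argument is essentially a diagram chase through the structure theorem for $H$. The only point one needs to be careful about is invoking \emph{injectivity} of $H$ on a single unstable leaf (item~(3) of Theorem~\ref{foliaciones}) when selecting the point $y$ in the first inclusion; without this, one would obtain only that some $y$ exists, not that $w$ is determined by its class $[x]$-representative along the unstable direction.
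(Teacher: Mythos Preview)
Your proof is correct and follows exactly the approach the paper intends: the corollary is stated there as an immediate consequence of the fact that $H$ restricts to a homeomorphism on each unstable leaf (Theorem~\ref{foliaciones}(3)) together with Corollary~\ref{puntosequivalentes}, and you have simply spelled out the two inclusions that the paper leaves implicit (it only says ``In particular from the fact $H:\tilde{\F}^u_G(x)\to\tilde{\F}^u_B(H(x))$ is a homeomorphism, we have \ldots'' and refers to a figure). One small remark: in your closing comment, what you actually use to produce the point $y$ in the first inclusion is \emph{surjectivity} of $H|_{\tilde{\F}^u_G(w)}$ onto $\tilde{\F}^u_B(H(w))$, not injectivity; uniqueness of $y$ is irrelevant for the set equality.
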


\begin{figure}[ht]\begin{center}

\psfrag{cz}{$[z]$} \psfrag{cx}{$[x]$} \psfrag{Hx}{$H(x)$}
\psfrag{Hz}{$H(z)$} \psfrag{H}{$H$} \psfrag{x}{$x$} \psfrag{z}{$z$}
\psfrag{Fcu}{$\tilde{\F}^{cu}_G(x)$} \psfrag{Bcu}{$H(x)+ E^c_B\oplus
E^u_B$} \psfrag{Bc}{$E^c_B$} \psfrag{Bu}{$E^u_B$}
\psfrag{Fc}{$\tilde{\F}^c_G$} \psfrag{Fu}{$\tilde{\F}^u_G$}
\psfrag{Fcz}{$\tilde{\F}^c_G(z)$}
\includegraphics[height=6cm]{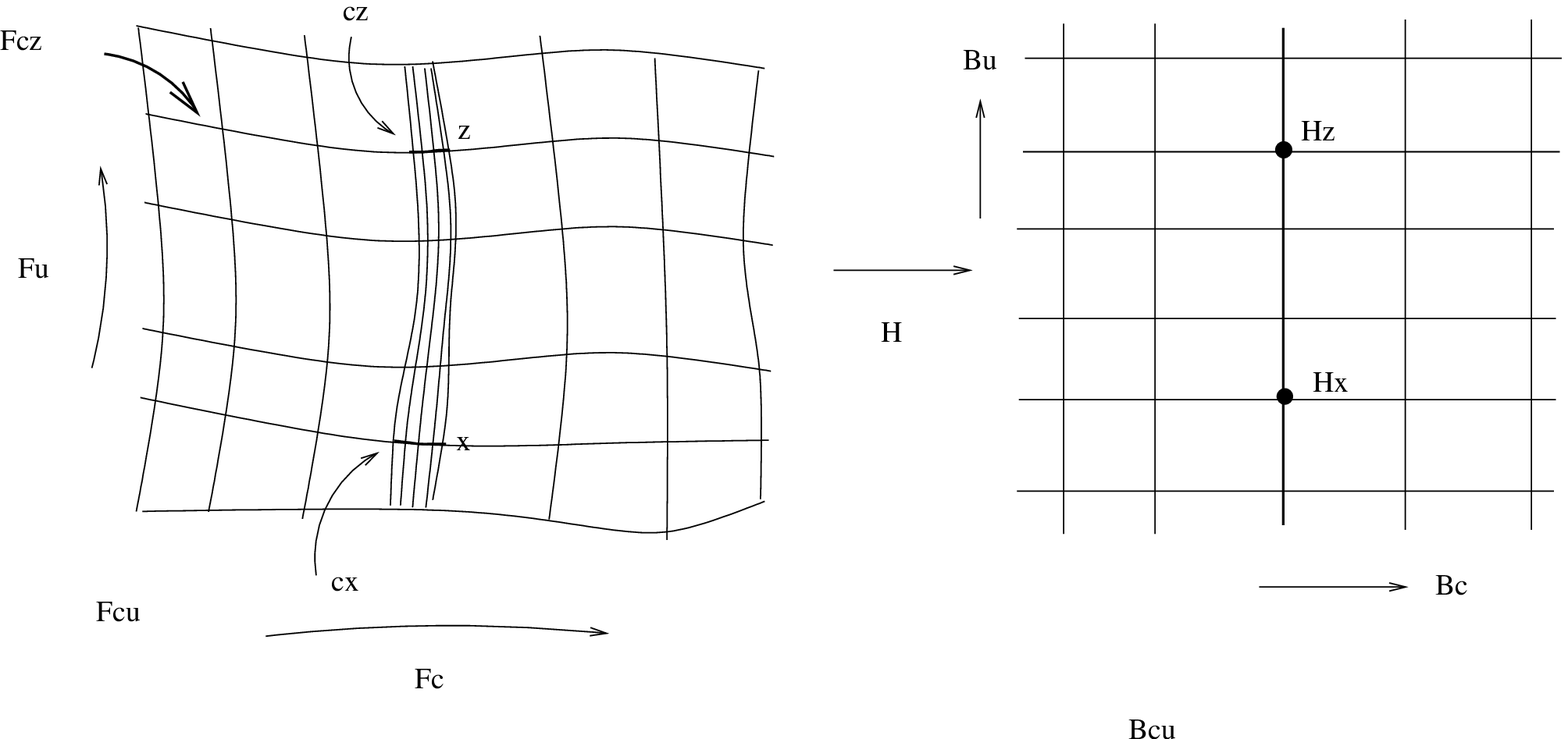}
\caption{}\label{class}
\end{center}\end{figure}

Now, going back to the induced linear anosov diffeomorphism on the
$3$-torus by $B\in SL(3,Z)$ and the Ma\~{n}\'{e}'s DA $g\in\mathcal
U(g_{B,k})$ and applying the previous results we get the following

\begin{teo}
There exists $h:\T^3\to\T^3$ continuous and onto such that
\begin{enumerate}
\item $B\circ h=h\circ g$
\item $dist_{C^0}(h,id)\le C\sqrt{k}.$
\item $h(\F^j_g(x))=\F^j_B(h(x))$ where $j=cs,cu,c, u$ and $h:\F^u_g(x)\to\F^u_B(h(x))$ is a homeomorphism.
\item If $h(x)=h(y)$ then $y\in\F^c_g(x).$
\item $h^{-1}(z)$ is either a point or an arc contained in a central leaf (with diameter less than $2C\sqrt{k}).$
\item If we set $[x]=h^{-1}(h(x))=\{y\in\T^3:h(x)=h(y)\}$ then, for $z\in\F^u_g(x)$ we have
$$[z]=\left(\bigcup_{y\in[x]}\F^u_g(y)\right) \bigcap \F^c_g(z).$$
\end{enumerate}
\end{teo}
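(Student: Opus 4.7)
The plan is to descend the results established in Theorems \ref{semi} and \ref{foliaciones} and Corollaries \ref{puntosequivalentes} and \ref{picclass} on the universal cover $\R^3$ to the torus $\T^3$. The starting point is to verify that the semiconjugacy $H:\R^3\to\R^3$ is $\Z^3$-equivariant: if $\gamma\in\Z^3$, then the map $x\mapsto H(x+\gamma)-\gamma$ also satisfies the characterization in Theorem \ref{semi}(3) (using that $G$ commutes with translations by $\Z^3$, being the lift of $g$), so uniqueness forces $H(x+\gamma)=H(x)+\gamma$. Combined with $B\in SL(3,\Z)$, this yields a well-defined continuous onto map $h:\T^3\to\T^3$ as in Theorem \ref{semi}(5), and items (1) and (2) follow at once from parts (1), (2), (5) of Theorem \ref{semi}, together with the bound $\mathrm{dist}_{C^0}(B,g)\le\sqrt{k}$.

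For item (3), I would fix $x\in\T^3$ with a lift $\tilde x\in\R^3$ and apply Theorem \ref{foliaciones}, then project via $\pi$: the identities $\pi\circ H=h\circ\pi$, $\pi(\tilde{\F}^j_G(\tilde x))=\F^j_g(x)$ and $\pi(\tilde{\F}^j_B(H(\tilde x)))=\F^j_B(h(x))$ for $j\in\{cs,cu,c,u\}$ give the desired equalities. The homeomorphism claim for unstable leaves (in the leaf topology) transfers from its $\R^3$ counterpart once one knows that $\pi$ is injective on each lifted unstable leaf; this holds because $E^u_B\cap\Z^3=\{0\}$ (the unstable eigendirection of the algebraic Anosov $B$ is irrational) and $\tilde{\F}^u_G$ lies in a tiny cone around $E^u_B$. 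For item (4), if $h(x)=h(y)$ one lifts to $\tilde x,\tilde y$; then $H(\tilde x)$ and $H(\tilde y)$ differ by some $\gamma\in\Z^3$, and equivariance lets us replace $\tilde y$ by $\tilde y-\gamma$ to assume $H(\tilde x)=H(\tilde y)$. Corollary \ref{puntosequivalentes} then gives $\tilde y\in\tilde{\F}^c_G(\tilde x)$, and projecting yields $y\in\F^c_g(x)$.

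For item (5), equivariance of $H$ implies that for any lift $\tilde z\in\pi^{-1}(z)$ one has $H^{-1}(\pi^{-1}(z))=H^{-1}(\tilde z)+\Z^3$, hence $h^{-1}(z)=\pi(H^{-1}(\tilde z))$. By Corollary \ref{puntosequivalentes}, $H^{-1}(\tilde z)$ is either a point or a central arc of diameter at most $2C\sqrt{k}$; taking $k$ small enough that $2C\sqrt{k}$ is strictly below the injectivity radius of $\T^3$, the projection $\pi$ is an embedding on this arc, so $h^{-1}(z)$ is a single point or arc contained in $\F^c_g(z)$ with the claimed diameter bound. Item (6) then follows by applying $\pi$ to the identity of Corollary \ref{picclass} and invoking item (3) to identify the projected foliations. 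I expect the main subtlety to be in item (5), namely ensuring, via the $\Z^3$-equivariance of $H$, that the various lifts of $z$ produce the same projected fiber (rather than several disjoint arcs on $\T^3$), so that $h^{-1}(z)$ inherits the simple point-or-arc structure from the $\R^3$ statement.
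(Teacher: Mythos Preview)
Your proposal is correct and matches the paper's approach: the paper gives no explicit proof of this theorem, simply stating that it follows by ``applying the previous results'' (i.e., Theorems~\ref{semi} and~\ref{foliaciones} and Corollaries~\ref{puntosequivalentes} and~\ref{picclass}) on the universal cover and passing to the quotient, which is precisely the descent argument you spell out.

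One small inaccuracy worth correcting: the lift $G$ does \emph{not} commute with integer translations. Since $g$ is isotopic to $B$, the correct deck-transformation relation is $G(x+\gamma)=G(x)+B\gamma$ for $\gamma\in\Z^3$, and hence $G^n(x+\gamma)=G^n(x)+B^n\gamma$. Fortunately your equivariance argument still goes through with this correction: for $y=H(x)+\gamma$ one gets
\[
\|B^n y - G^n(x+\gamma)\| = \|B^n H(x)+B^n\gamma - G^n(x)-B^n\gamma\| = \|B^n H(x)-G^n(x)\|\le CK,
\]
so the characterization in Theorem~\ref{semi}(3) still forces $H(x+\gamma)=H(x)+\gamma$. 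The rest of your argument is fine as written.
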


\begin{cor}\label{cmini}
Let $g\in\mathcal U(g_{B,k})$ as above. Then, $\F^c_g$ is minimal,
i.e., every leaf is dense in $\T^3.$
\end{cor}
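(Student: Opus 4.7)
The plan is to reduce the minimality of $\F^c_g$ to that of $\F^c_B$ by using the semiconjugacy $h$. The two key structural facts supplied by the preceding theorem are: (a) $h(\F^c_g(x))=\F^c_B(h(x))$, so $h$ sends $\F^c_g$-leaves onto $\F^c_B$-leaves, and (b) every fiber $h^{-1}(z)$ is contained in a single $\F^c_g$-leaf.

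First I would verify that $\F^c_B$ is itself minimal on $\T^3$. Since $\lambda_c$ is an eigenvalue of $B\in SL(3,\Z)$ with $0<\lambda_c<1$, it is an algebraic integer that is not a rational integer, hence irrational. Consequently $E^c_B$ contains no nonzero vector of $\Z^3$: any such $v$ would give $Bv=\lambda_c v\in\Z^3$, forcing $\lambda_c\in\QQ$. By Kronecker's theorem the projection of $E^c_B$ to $\T^3$ is dense, and the same holds for every translate, so every leaf of $\F^c_B$ is dense in $\T^3$.

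The second step is a saturation argument. Let $\Lambda\subset\T^3$ be any nonempty closed $\F^c_g$-saturated set. Property (b) implies that $\Lambda$ is also $h$-saturated: if $x\in\Lambda$, then $h^{-1}(h(x))\subset \F^c_g(x)\subset \Lambda$, so $\Lambda=h^{-1}(h(\Lambda))$. The image $h(\Lambda)$ is closed (continuous image of a compact set) and, by (a), is $\F^c_B$-saturated; the minimality of $\F^c_B$ just established forces $h(\Lambda)=\T^3$, whence $\Lambda=h^{-1}(\T^3)=\T^3$. Applying this to $\Lambda=\overline{\F^c_g(x)}$---which is closed and $\F^c_g$-saturated because $\F^c_g$ is a continuous foliation---gives $\overline{\F^c_g(x)}=\T^3$, i.e., every leaf of $\F^c_g$ is dense.

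The only delicate point is checking that closed $\F^c_g$-saturated sets are automatically $h$-saturated, which rests entirely on the fact that the fibers of $h$ lie in single central leaves; once that is in hand, the reduction to the linear case is formal.
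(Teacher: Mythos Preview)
Your saturation argument is correct and is a clean repackaging of the paper's approach: the paper also reduces to the minimality of $\F^c_B$ via the same two facts (a) and (b), but instead of working with closed saturated sets it picks a small disk $S\subset U$ transverse to $E^c_g$, notes that $h_{|S}$ is injective (this is again (b)), and intersects the transversal $h(S)$ with the dense leaf $\F^c_B(h(x))$. Your formulation avoids talking about transversals and injectivity of $h$ on them, which is arguably tidier.

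There is, however, a small gap in your first step. Showing $E^c_B\cap\Z^3=\{0\}$ only rules out the leaf closing up to a circle; it does not by itself exclude the closure being a $2$-subtorus. Kronecker's criterion for a line through the origin to be dense in $\T^3$ is that its direction is not contained in any proper \emph{rational subspace}, which is strictly stronger than ``not a rational line'' (for instance the line in direction $(1,\sqrt2,1+\sqrt2)$ meets $\Z^3$ only at $0$ but projects into the $2$-torus $\{x+y\equiv z\}$). The fix is short: the characteristic polynomial of $B\in SL(3,\Z)$ is irreducible over $\QQ$, since a reducible monic integer cubic with constant term $\pm1$ would have a root $\pm1$, contradicting $0<\la_s<\la_c<1<\la_u$. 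Hence $B$ has no proper rational invariant subspace. If $E^c_B$ lay in a rational plane $V$, then $E^c_B=B(E^c_B)\subset V\cap BV$; either $V=BV$ is $B$-invariant, or $V\cap BV$ is a rational line containing $E^c_B$ --- both impossible. With this correction your proof goes through.
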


\begin{proof}
Let $x\in\T^3$ and let $U\subset \T^3$ be an open set. We want to
prove that $\F^c_g(x)\cap U\neq\emptyset.$ Consider $S\subset U$ a
small two dimensional disk transverse to $E^c_g.$ We know that
$h_{/S}$ is injective and hence $h(S)$ is a two dimensional
topological manifold transverse to $E^c_B.$ Since $\F^c_B$ is
minimal, we get that $\F^c_B(h(x))\cap h(S)\neq\emptyset,$ that is,
there exists $y\in S$ such that $h(y)\in \F^c_B(h(x))=h(\F^c_g(x)).$
Therefore $y\in \F^c_g(x)$ and so $\F^c_g(x)\cap U\neq\emptyset.$
\end{proof}

\subsection{Further analysis on the semiconjugacy}\label{secanalysis}
In this section we give a more detailed consequences of the
semiconjugacy with the linear Anosov diffeomorphism $B$ and on the
equivalent classes $[x]=h^{-1}(h(x))=\{y:h(y)=h(x)\}.$ Let us begin
with the following

\begin{lema}\label{full}
For $g\in\mathcal U(g_{B,k})$ as above the following hold:
\begin{enumerate}
\item If $$\liminf_{n\to-\infty}\frac{1}{n}\log\|dg^n_{/E^c_g(x)}\|>0$$ then $[x]=h^{-1}(h(x))\supsetneq\{x\}.$

\item The set $\mathcal{A}=\{z\in\T^3: h^{-1}(z)\mbox{ is a point }\}$  has full Lebesgue measure.
\end{enumerate}

\end{lema}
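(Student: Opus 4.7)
My plan is to prove the two items in order.

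\textbf{Item (1).} The hypothesis says $\|dg^{-m}_{/E^c_g(x)}\|$ decays exponentially in $m$. Because $E^c_g$ is one dimensional and $\log\|dg^{-1}_{/E^c_g}\|$ is H\"older continuous, a telescoping argument along the backward orbit of $x$ yields constants $r_0>0$ and $D>0$ such that, for every arc $I\subset\F^c_g(x)$ through $x$ of length $\le r_0$ and every $n\ge 0$,
\[
\ell(g^{-n}(I))\le D\,\ell(I)\,\|dg^{-n}_{/E^c_g(x)}\|.
\]
Fix $J\subset\F^c_g(x)$ with $x\in J$ and $\ell(J)\le r_0$. The inequality above forces $\ell(g^{-n}(J))\to 0$ exponentially, so for each $y\in J$
\[
d_{\F^c_g}(g^{-n}(y),g^{-n}(x))\le \ell(g^{-n}(J))\xrightarrow[n\to\infty]{}0.
\]
By the quasi-isometric structure of $\tilde{\F}^c_G$ (Corollary \ref{puntosequivalentes}) this gives $\|G^n(y)-G^n(x)\|\to 0$ as $n\to-\infty$. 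Since $y\in\F^c_g(x)$ implies $h(y)-h(x)\in E^c_B$ and $B$ expands $E^c_B$ by the factor $\la_c^{-1}>1$ in backward time, the shadowing characterisation of Theorem \ref{semi}(3) forces $h(y)=h(x)$ (if $h(y)\ne h(x)$ then $\|B^n(h(y))-B^n(h(x))\|=\la_c^n\|h(y)-h(x)\|\to\infty$ as $n\to-\infty$, contradicting the bound on $\|G^n(y)-G^n(x)\|$). Hence $J\subset[x]$ and $[x]\supsetneq\{x\}$.

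\textbf{Item (2).} By Theorem \ref{foliaciones} together with Corollary \ref{picclass}, $h$ induces a bijection between $\F^c_g$-leaves in the source and $\F^c_B$-leaves in the target: $x,y$ lie on the same $\F^c_g$-leaf iff $h(x),h(y)$ lie on the same $\F^c_B$-leaf. Consequently, for each target central leaf $L^B$ there is a unique source leaf $L^g$ with $h(L^g)=L^B$, and $h|_{L^g}\colon L^g\to L^B$ is a continuous monotonic surjection between $1$-manifolds. Its non-trivial fibres are pairwise disjoint compact arcs in $L^g$, hence form an at most countable collection, so $\mathcal{A}^c\cap L^B$ is at most countable on every target leaf $L^B$.

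Finally, since $\F^c_B$ is a Kronecker-type foliation of $\T^3$ by straight lines in the direction of the eigenspace $E^c_B$, Lebesgue measure on the target disintegrates as the product of the $2$-dimensional transverse Lebesgue measure $\nu_\perp$ and $1$-dimensional Lebesgue $\mathrm{Leb}_{L^B}$ along the leaves. Fubini then yields
\[
\mathrm{Leb}(\mathcal{A}^c)=\int \mathrm{Leb}_{L^B}(\mathcal{A}^c\cap L^B)\,d\nu_\perp(L^B)=0,
\]
since each integrand is zero (countable set). This gives $\mathrm{Leb}(\mathcal{A})=1$. The only non-routine input is the H\"older distortion estimate of item (1); item (2) is then almost formal once the leaf-bijection in Theorem \ref{foliaciones} is in hand.
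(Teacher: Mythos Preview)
Your proof is correct, but both items proceed along lines genuinely different from the paper's.

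For item~(1), the paper also begins by producing a small central arc $J$ through $x$ whose backward iterates stay bounded (indeed $J\subset W^u(x)$, via the same ``standard'' distortion reasoning you invoke), but then it spends most of the effort proving a separate \emph{forward} bound: a geometric argument (a central arc of length between $4\rho$ and $6\rho$ has at most half its length inside $B(p,\rho)$, and $\|dg_{/E^c}\|\le\la_c$ outside) shows $\ell(g^n(J))$ can never exceed $6\rho$. With both forward and backward iterates bounded, Theorem~\ref{semi}(4) is invoked. Your route is shorter: you use only the backward contraction together with $h(\F^c_g(x))\subset\F^c_B(h(x))$ and the fact that $B^{-1}$ expands $E^c_B$ by $\la_c^{-1}>1$, deriving a contradiction directly from the shadowing characterisation in Theorem~\ref{semi}(3). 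This bypasses the forward-length argument entirely; indeed, once $J\subset[x]$ is established, the forward bound on $[x]$ is already available from Corollary~\ref{puntosequivalentes}, so the paper's extra step is redundant for this lemma.

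For item~(2), the paper gives an ergodic argument: by Birkhoff's theorem for $B$ with respect to Lebesgue, a.e.\ $y$ has backward $B$-orbit visiting $B(p,4\rho)$ with frequency $<\tfrac12$; if $h^{-1}(y)$ were a nontrivial arc, its backward $g$-iterates would grow without bound (since outside $B(p,\rho)$ the central derivative is at most $\la_c(1+\ep)<1$), contradicting the uniform bound $2C\sqrt{k}$ on classes. Your argument is purely measure-theoretic: on each target central leaf the nontrivial fibres correspond to disjoint nondegenerate arcs in a line, hence are countable, and a local Fubini in foliation charts finishes (one should phrase the disintegration locally, since $\F^c_B$ has dense leaves and there is no global product structure, but this is routine). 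Your approach is more elementary and uses nothing about where the perturbation is supported; the paper's yields more explicit dynamical information about which $y$ lie in~$\mathcal{A}$.
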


\begin{proof}
For the proof of the first item, let $\gamma,$
$$\liminf_{n\to-\infty}\frac{1}{n}\log\|dg^n_{/E^c_g(x)}\|>\gamma>0.$$
Then, for $n$ large enough we have
$$\|Dg^{-n}_{E^c_g(x)}\|\le e^{-\gamma}n$$ and therefore, by standard
arguments, there exists a central arc $I_c$ containing $x$ such that
the length of $g^{-n}(I_c)$ is uniformly bounded for $n\ge 0$
(indeed, $I_c\subset W^u(x)).$ We claim that $g^n(I_c)$ has bounded
length for $n\ge 0.$ We will denote by $\ell(I)$ the length of $I$.

We may assume that $\rho$ is small (recall that the support of the
modification of $B$ is in $B(p,\rho)$) so that if $J_c$ is a central
arc such that $4\rho\le \ell(J_c)\le 6\rho$ then $J_c\cap B(p,\rho)$
has at most one connected component of length at most $2\rho.$
Recall also that $\la_c(g)<1+\ep$ where $\ep$ is small (taking $k$
small) (for instance, $\ep<1-\la_c$ and $\ep<1/2.$)

To prove the claim we may assume that $\ell(I_c)<2\rho$ and arguing
by contradiction, consider the case where the length of $g^n(I_c)$
is unbounded for $n\ge 0.$ Let $n_0$ be the first time such that
$\ell(g^n(I_c)\ge 6\rho.$ Since $4\rho \la_c(g)<4\rho (1+\ep)<6\rho$
it follows that
$$4\rho\le \ell(g^{n_0-1}(I_c))< 6\rho.$$
Set $J_c=g^{n_0-1}(I_c).$ By the above condition on $J_c$ and
recalling that $\|dg_\xi\|=\|B\|=\la_c$ if $\xi\notin B(p,\rho)$ we
get
$$6\rho\le \ell(g^{n_0}(I_c))=\ell(g(J_c))\le (1+\ep)\frac{\ell(J_c)}{2} +
\la_c\frac{\ell(J_c)}{2}<\ell(J_c)<6\rho,$$ a contradiction. Now
since, $\ell(g^n(I_c))$ is bounded for all $n\in\Z$ we conclude that
$h(I_c)=h(x)$ (this can be seen by lifting to $\R^3$ where
inmediately follows that $\|G^n(x)-G^n(y)\|$ is bounded for all
$n\in\Z$ and $y\in I_c.$)

For the proof of the second item, we may assume that
$\la_c(1+\ep)\la_c(g)<1$ and also that $m(B(p,4\rho))<\frac{1}{2},$
where $m$ is the lebesgue measure in $\T^3.$ Since $B:\T^3\to\T^3$
preserves measure and it is ergodic there is a full measure set
$\mathcal{R}$ such that if $y\in\mathcal{R}$ we have
$$\lim_{n\to\infty}\frac{\#\{j:0\le j\le n, B^{-j}(y)\in
B(p,4\rho)\}}{n}=m(B(p,4\rho))<\frac{1}{2}.$$

We will show that $\mathcal{R}\subset\mathcal{A}.$ Arguing by
contradiction, let $y\in\mathcal{R}$ such that $h^{-1}(y)$ is a
nontrivial center arc $I_c.$ Recall that $\ell(g^n(I_c))$ is bounded
(by $2C\sqrt{k}<\rho$) for all $n\in\Z.$ Thus, whenever we have that
$B^{-j}(y)\notin B(p,4\rho)$ then $g^{-j}(I_c)\cap
B(p,\rho)=\emptyset.$ Since $dg_{/E^c_g(\xi)}\le \la_c(1+\ep)$ for
$\xi\notin B(p,\rho)$, if $J_c\cap B(p,\rho)=\emptyset$ then
$\ell(g^{-1}(J_c))\ge \left(\la_c(1+\ep)\right)^{-1}\ell(J_c).$ And
in any case $\ell(g^{-1}(J_c))\ge \la_c(g)^{-1}\ell(J_c)$

Now, for $n$ large enough we have:

\begin{eqnarray*}
\ell(g^{-n}(I_c)) &\ge &\left( \prod_{j: B^{-j}\notin
B(p,4\rho)}\left(\la_c(1+\ep)\right)^{-1}\prod_{j: B^{-j}\in
B(p,4\rho)}\la_c(g)^{-1}\right)
\ell(I_c)\\
&\ge&
(\la_c(1+\ep)\la_c(g))^{-\frac{n}{2}}\ell(I_c)\to_{n\to\infty}\infty,
\end{eqnarray*}
a contradiction.
\end{proof}

The following lemma says that in any unstable leaf there is a point
whose forward orbit never meets $B(p,2\rho)$ and is similar to what
we have done in Section \ref{secmini}. Notice also that $\F^u_g$ is
orientable and choose an orientation. For $x\in \T^3$ denote by
$\F^{u,+}_g(x,t)$ an arc of length $t$ in $\F^u_g(x)$ starting at
$x$ in the chosen orientation.

\begin{lema}\label{ufuera}
Assume that $\la_u>3.$ Then for $\rho, k$ and $\mathcal{U}$ small the
following holds for $g\in\mathcal U(g_{B,k}):$ for any $x\in\T^3$
there exists a point $z_x\in \F^{u,+}_g(x,1)$ such that $g^n(z)\cap
B(p,2\rho)=\emptyset$ for any $n\ge 0.$
\end{lema}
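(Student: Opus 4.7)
The plan is to mirror the construction carried out in the proof of Theorem \ref{uminimal}, but applied to the unstable foliation and iterating \emph{forward} in time. First I would observe that since $E^u_g$ lies in a tiny cone around $E^u_B$ (Proposition \ref{DA}) and the leaves of $\F^u_B$ are straight lines parallel to $E^u_B$, for $k$ and $\mathcal U$ small enough every arc of length at least one in $\F^u_g$ has a subarc of length at least $1/3$ disjoint from $B(p,2\rho)$, provided $\rho$ was chosen small enough at the outset. Indeed, a straight segment of length one meets a ball of radius $2\rho$ in a set of diameter at most $4\rho$, so one of the two complementary arcs has length at least $(1-4\rho)/2\ge 1/3$ when $\rho$ is small; the same property transfers to $\F^u_g$-arcs because $E^u_g$ is uniformly close to $E^u_B$. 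This is the direct analogue of condition (iv) in the proof of Theorem \ref{uminimal}.

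Next, since $\la_u>3$ by hypothesis, Remark \ref{epsilon}(6) shows that $\la_u(g)>\la_u-\ep>3$ for $k$ and $\mathcal U$ sufficiently small, so $g$ expands along $\F^u_g$ by a factor strictly greater than $3$. With these two ingredients the inductive construction is then straightforward. Set $I^u_0=\F^{u,+}_g(x,1)$ and choose $I^u_1\subset I^u_0$ with $\ell(I^u_1)\ge 1/3$ and $I^u_1\cap B(p,2\rho)=\emptyset$. Since $\ell(g(I^u_1))\ge \la_u(g)\cdot(1/3)>1$, the same selection inside $g(I^u_1)$ yields $I^u_2\subset g(I^u_1)$ with $\ell(I^u_2)\ge 1/3$ and $I^u_2\cap B(p,2\rho)=\emptyset$. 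Continuing inductively, for every $j\ge 1$ we obtain $I^u_{j+1}\subset g(I^u_j)$ with $\ell(I^u_{j+1})\ge 1/3$ and $I^u_{j+1}\cap B(p,2\rho)=\emptyset$.

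Finally I would define $z_x$ as any point of $\bigcap_{j\ge 0} g^{-j}(I^u_{j+1})$. This intersection is nonempty because the sets form a decreasing sequence of compact arcs contained in $\F^{u,+}_g(x,1)$: the inclusion $g^{-j}(I^u_{j+1})\subset g^{-(j-1)}(I^u_j)$ follows at once from $I^u_{j+1}\subset g(I^u_j)$, so they nest down to $I^u_1\subset I^u_0$. By construction $g^j(z_x)\in I^u_{j+1}$ is disjoint from $B(p,2\rho)$ for every $j\ge 0$, while the case $j=0$ gives $z_x\in I^u_1\subset \F^{u,+}_g(x,1)\setminus B(p,2\rho)$. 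The only mildly nontrivial step is the geometric fact isolated in the first paragraph, which is completely parallel to the stable-arc statement used in Theorem \ref{uminimal} and causes no extra difficulty beyond choosing $\rho$, $k$ and $\mathcal U$ small.
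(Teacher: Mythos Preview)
Your proposal is correct and follows essentially the same approach as the paper's own proof: both arguments first arrange (for $\rho$, $k$, $\mathcal U$ small) that every unstable arc of length one contains a subarc of length at least $1/3$ disjoint from $B(p,2\rho)$, then use $\la_u(g)>3$ to iterate this selection forward and finally take the nested intersection of preimages to produce $z_x$. Your write-up is in fact slightly more explicit than the paper's, supplying the elementary geometric reason for the $1/3$-subarc property and spelling out why the nested intersection is nonempty.
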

\begin{proof}
We may assume that $\rho$ is so small that any segment $I_u$ in
$\F^u_B$  of length one has a subsegment $I_{u_1}$ of length $1/3$
such that $I_{u_1}\cap B(p,2\rho)=\emptyset.$ Now, if $k$ is small and
$\mathcal{U}(g_{B,k})$ as well we may assume that the same property
holds for $g\in\mathcal U,$ that is, any arc $I_u$ in $\F^u_g$  of
length one has a subarc $I_{u_1}$ of length $1/3$ such that $I_{u_1}\cap
B(p,2\rho)=\emptyset.$ Moreover, we may assume that $\la^u(g)>3.$
Now, $g(I_{u_1})$ has length at leat one and so it has a subarc
$I_{u_2}$ such that $I_{u_2}\cap B(p,2\rho)=\emptyset.$ By induction,
for any $n$ we have that $g(I_{u_n})$ contains $I_{u_{n+1}}$ such that
$I_{u_{n+1}}\cap B(p,2\rho)=\emptyset.$ Therefore,
$$z_x\in \bigcap_{n\ge 0} g^{-n}(I_{u_{n+1}})$$
satisfies the lemma.
\end{proof}

\begin{cor}\label{classto0}
Let $g\in\mathcal U$ as above and let $x\in\T^3$ be such that
$[x]\supsetneq \{x\}.$ Then, given ay $\eta>0$ there is a point $y\in
\F^{u,+}_g(x)$ (the positive side of $\F^u_g$ in the chosen
orientation) such that the length $\ell([y])<\eta.$
\end{cor}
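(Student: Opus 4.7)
The strategy is to combine Lemma \ref{ufuera}, applied to far past iterates of $x$, with the fact that classes have uniformly bounded diameter while the central direction contracts outside $B(p,\rho)$. First I would fix $\ep>0$ small enough that $\sigma:=\la_c(1+\ep)<1$ (possible because $\la_c<1$), and then shrink $k,\rho$ and $\mathcal U(g_{B,k})$ so that, for $g\in\mathcal U$: (i) every class $[w]$ is either a point or a central arc of diameter less than $\rho$ (from the theorem on the semiconjugacy, taking $2C\sqrt{k}<\rho$); (ii) $\|dg_{/E^c_g(\xi)}\|\le\sigma$ whenever $\xi\notin B(p,\rho)$ (as in the proof of Theorem \ref{uminimal}); (iii) the hypotheses of Lemma \ref{ufuera} hold. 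I would also fix an orientation of $\F^u_g$ preserved by $g$, which is possible since $g$ is isotopic to $B$ and $B$ preserves orientation on $E^u_B$.

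Given $\eta>0$, for each $n\ge 1$ I would invoke Lemma \ref{ufuera} at the point $g^{-n}(x)$ to obtain $z_n\in\F^{u,+}_g(g^{-n}(x),1)$ with $g^k(z_n)\notin B(p,2\rho)$ for every $k\ge 0$. The candidate point is $y_n:=g^n(z_n)$, which lies on $\F^{u,+}_g(x)$ by invariance and orientation-preservation of $\F^u_g$.

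To estimate $\ell([y_n])$ I would iterate the class backward. Using that $g$ maps classes to classes (a consequence of the semiconjugacy $h\circ g=B\circ h$), for each $0\le j\le n$ define
$$I_j:=g^{-j}([y_n])=[g^{n-j}(z_n)],$$
a central arc containing $g^{n-j}(z_n)$ and, by (i), of diameter less than $\rho$. Since $g^{n-j}(z_n)\notin B(p,2\rho)$ for $0\le j\le n$, each $I_j$ is disjoint from $B(p,\rho)$. Hence by (ii), $\ell(I_j)=\ell(g(I_{j+1}))\le\sigma\,\ell(I_{j+1})$ for $0\le j\le n-1$, so by induction $\ell([y_n])=\ell(I_0)\le\sigma^n\ell([z_n])$. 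The lengths $\ell([z_n])$ are uniformly bounded (classes have diameter less than $\rho$ and, since $E^c_g$ lies in a small cone around the linear direction $E^c_B$, central leaves are essentially graphs over $E^c_B$ at that scale, so length is comparable to diameter), so $\ell([y_n])\to 0$; choosing $n$ large enough, $y:=y_n$ does the job.

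The only delicate point I foresee is converting the diameter bound on $[z_n]$ into a length bound, which is handled by the cone condition on $E^c_g$ together with the smallness of $\rho$. The hypothesis $[x]\supsetneq\{x\}$ is not essential to the argument: if $[x]=\{x\}$ then $y=x$ trivially satisfies $\ell([y])=0<\eta$.
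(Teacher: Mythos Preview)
Your proof is correct and follows essentially the same approach as the paper: apply Lemma \ref{ufuera} at a past iterate $g^{-n}(x)$ to find $z_n$ whose forward orbit avoids $B(p,2\rho)$, then use that classes have diameter $<\rho$ so that $g^k([z_n])$ stays outside $B(p,\rho)$ and contracts by a factor $\la_c(1+\ep)<1$ at each step, giving $\ell([g^n(z_n)])\le(\la_c(1+\ep))^n\ell([z_n])$. The only cosmetic differences are that the paper fixes $n_0$ in advance rather than producing a sequence, and it bounds $\ell([z])$ directly by $2C\sqrt{k}$ rather than going through the cone argument (your remark on converting diameter to length is actually more careful than the paper's wording).
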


\begin{proof}
Recall that for $g\in\mathcal{U}$ we have
$\|dg_{/E^c_g(\xi)}\|<\la_c(1+\ep)<1$ if $\xi\notin B(p,\rho).$
Also, if $k$ is small then $2C\sqrt{k}<\rho.$ Let $\eta$ be given
and let $n_0$ be such that
$$\left(\la_c(1+\ep)\right)^{n_0}2C\sqrt{k}<\eta.$$
Consider $x$ such that $[x]\supsetneq \{x\}.$ From the above lemma,
consider $z\in \F^u_g(g^{-n_0}(x),1)$ such that $g^n(z)\notin
B(p,2\rho)$ for any $n\ge 0.$ Notice that, since $[g^{-n_0}(x)]$ is
not trivial, the same is true for $z.$ On the other hand $[z]$ is a
central segment of length at most $2C\sqrt{k}.$ Therfore,
$g^n([z])\cap B(p,\rho)=\emptyset$ for $n\ge 0.$ Therefore,
$$\ell(g^n[z])\le (\la_c(1+\ep))^n2C\sqrt{k}.$$
Finally, setting $y=g^{n_0}(z)\in \F^{u,+}_g(x)$ we have
$$\ell([y])=\ell(g^{n_0}[z])\le (\la_c(1+\ep))^{n_0}2C\sqrt{k}<\eta.$$
\end{proof}

The next result is fundamental for our purpose on the  behavior of
the holonomy map along the unstable foliation. The main tool is the
existence of a transversal homoclinic point (recall Corollary
\ref{homo}).

\begin{lema}\label{pclass}
Let $g\in \mathcal U(g_{B,k})$ having a transversal homoclinic point
associated to the fixed point  $p$ of unstable index two. There
exists $\ep_0$  and $z_p\in \F^{u,+}_g(p)$ such that
$$\limsup_{n\to\infty}\ell(g^n([z_p]))>\ep_0.$$
\end{lema}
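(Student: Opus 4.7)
Plan: I set $L:=\ell([p])$, which is positive by Lemma \ref{full} applied to $x=p$: since $p$ has unstable index two, $\|dg_{/E^c_g(p)}\|=\lambda_c+\beta_k(0)>1$, and hence the liminf hypothesis of that lemma holds at $p$, so $[p]$ is a non-degenerate central arc. I fix $\epsilon_0:=L/4$ and aim to produce $z_p\in\mathcal{F}^{u,+}_g(p)\setminus\{p\}$ whose forward orbit returns infinitely often to a neighborhood of $p$ on which $\ell$ stays above $L/2$.

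By Corollary \ref{picclass}, for $w\in\mathcal{F}^u_g(p)$ the class $[w]=S\cap\mathcal{F}^c_g(w)$, where $S=\bigcup_{y\in[p]}\mathcal{F}^u_g(y)$; thus $[w]$ is the image of $[p]$ under the strong-unstable holonomy inside the central-unstable leaf $\mathcal{F}^{cu}_g(p)$. This holonomy depends continuously on $w$, so $w\mapsto\ell([w])$ is continuous on $\mathcal{F}^u_g(p)$ and equals $L$ at $w=p$. Consequently there is an open neighborhood $V$ of $p$ in $\mathbb{T}^3$ such that $\ell([w])>L/2$ for every $w\in V\cap\mathcal{F}^u_g(p)$.

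The main task is to exhibit $z_p\in\mathcal{F}^{u,+}_g(p)\setminus\{p\}$ whose forward orbit meets $V$ infinitely often. Here the transversal homoclinic $q$ at $p$ enters: the orbit $\{g^n(q)\}$ accumulates at $p$ as $n\to\infty$, and by the $\lambda$-lemma the strong unstable leaves through $g^{-n}(q)$ converge, near $p$, to $\mathcal{F}^u_g(p)$. Combined with the minimality of $\mathcal{F}^u_g$ (Theorem \ref{uminimal}) and the uniform expansion rate $\lambda_u(g)>1$ along the leaves, this yields the syndetic-return property: for any nonempty open arc $U\subset\mathcal{F}^u_g(p)$ there is $n_0(U)$ with $g^n(U)\cap V\ne\emptyset$ for every $n\ge n_0(U)$. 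A standard Baire category argument in the Baire space $\mathcal{F}^u_g(p)$ then produces a residual subset of $z$'s for which $g^n(z)\in V$ for infinitely many $n$; picking any $z_p\ne p$ from this set, one obtains $\ell(g^n[z_p])>L/2>\epsilon_0$ along an infinite sequence of $n$, and hence $\limsup_{n\to\infty}\ell(g^n[z_p])\ge L/2>\epsilon_0$, as required.

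The hardest step is the recurrence one: density of $\mathcal{F}^u_g(p)$ alone does not force recurrence of a single forward $g$-orbit on the leaf. One really needs the combination of the uniform expansion on $\mathcal{F}^u_g(p)$, the quasi-isometric embedding of the leaf, and the ``foldings'' of $W^u(p)$ back toward $p$ generated by the transversal homoclinic, in order to upgrade density of the leaf into the bounded-gap (syndetic) return property that makes the Baire argument work.
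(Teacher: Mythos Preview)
Your argument has a genuine gap at the very first step, before any recurrence or Baire category is invoked. You deduce from the leaf-continuity of $w\mapsto\ell([w])$ on $\F^u_g(p)$ that there is an open set $V\subset\T^3$ around $p$ with $\ell([w])>L/2$ for \emph{every} $w\in V\cap\F^u_g(p)$. Leaf-continuity only controls $\ell([w])$ for $w$ close to $p$ \emph{in the leaf metric}. But $\F^u_g(p)$ is dense in $\T^3$, so any ambient neighborhood $V$ of $p$ meets $\F^u_g(p)$ in infinitely many arcs coming from arbitrarily large leaf-distance, and on those arcs your continuity argument says nothing. In fact Corollary~\ref{classto0} produces points $y\in\F^{u,+}_g(p)$ with $\ell([y])$ arbitrarily small, so the map $w\mapsto\ell([w])$ genuinely oscillates along the leaf; there is no reason these small-class points should avoid every neighborhood of $p$. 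Once this $V$ is gone, the rest of the scheme collapses: syndetic return of $g^n(z_p)$ to $V$ (which, incidentally, follows from minimality plus uniform expansion alone, without any use of the homoclinic point) no longer yields a lower bound on $\ell([g^n(z_p)])$.

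The paper's proof avoids this issue entirely by using the homoclinic point $z\in\F^s_g(p)\cap W^u(p)$ \emph{directly} to define $z_p$, rather than through an abstract recurrence argument. One sets $z_p:=[z]\cap\F^u_g(p)$, so that $[z_p]=[z]$ and hence $g^n([z_p])=[g^n(z)]$ for all $n$. Since $z\in\F^s_g(p)$, the whole arc $[z]$ lies in the invariant leaf $\F^{cs}_g(p)$, and $g^n(z)\to p$. Inside this two-dimensional leaf, $p$ is a hyperbolic saddle whose unstable manifold is the central segment $(q_1,q_2)$; the arc $[z]$ is a central arc transverse to $\F^s_g(p)$ at $z$ (and $z$ lies in its interior). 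The inclination lemma, applied \emph{within} $\F^{cs}_g(p)$, then forces $g^n([z])$ to accumulate on at least one of the half-segments $[q_1,p]^c$ or $[p,q_2]^c$, giving $\limsup_n\ell(g^n([z_p]))\ge\min\{\ell[q_1,p]^c,\ell[p,q_2]^c\}>\ep_0$. The point is that one controls the length of $[g^n(z_p)]$ not by knowing where $g^n(z_p)$ sits in $\T^3$, but by knowing that the equivalent point $g^n(z)$ lies on $\F^s_g(p)$ and tends to $p$; the $\la$-lemma is applied to the two-dimensional hyperbolic picture inside the central-stable leaf, not to the ambient dynamics.
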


\begin{proof}
Recall that $[p]$ is the central segment between $q_1,q_2.$ Let
$\ep_0<\min\{\ell[q_1,p]^c, \ell[p,q_2]^c\}.$ Notice also that
$$W^u(p)=\bigcup_{y\in (q_1,q_2)^c}\F^u_g(y).$$

Let $z$ be a homoclinic point associated to $p,$ that is $z\in
\F^s_g(p)\cap W^u(p).$ We know that
$$[z]=\left(\bigcup_{y\in[p]}\F^u_g(y)\right)\bigcap \F^c_g(z).$$
We may assume that the orientation in $\F^u_g$ is such that
$z_p=[z]\cap \F^u_g(p)\in\F^{u.+}_g(p).$ Since $[z]=[z_p], z\in
\F^s(p)$ and $[z]\subset \F^c_g(z)\subset \F^{cs}_g(p)$ by forward
iteration $g^n([z])$ must approach at leat to $[q_1,p]$ or $[p,q_1]$
(see also Figure \ref{zp}), and the lemma follows.
\end{proof}

\begin{figure}[ht]\begin{center}

\psfrag{[z]}{$[z]$} \psfrag{cx}{$[x]$} \psfrag{Fsp}{$\F^s_g(p)$}
\psfrag{I}{$[z]$} \psfrag{p}{$p$} \psfrag{q1}{$q_1$}
\psfrag{q2}{$q_2$} \psfrag{x}{$x$} \psfrag{z}{$z$}
\psfrag{Fcp}{${\F}^{c}_g(p)$}
\psfrag{Fup}{${\F}^{u}_g(p)$}
 \psfrag{gnz}{$g^n(z)$}
\psfrag{cgnz}{$g^n([z])$} \psfrag{Wux}{$W^u_\delta(x)$}

\includegraphics[height=8cm]{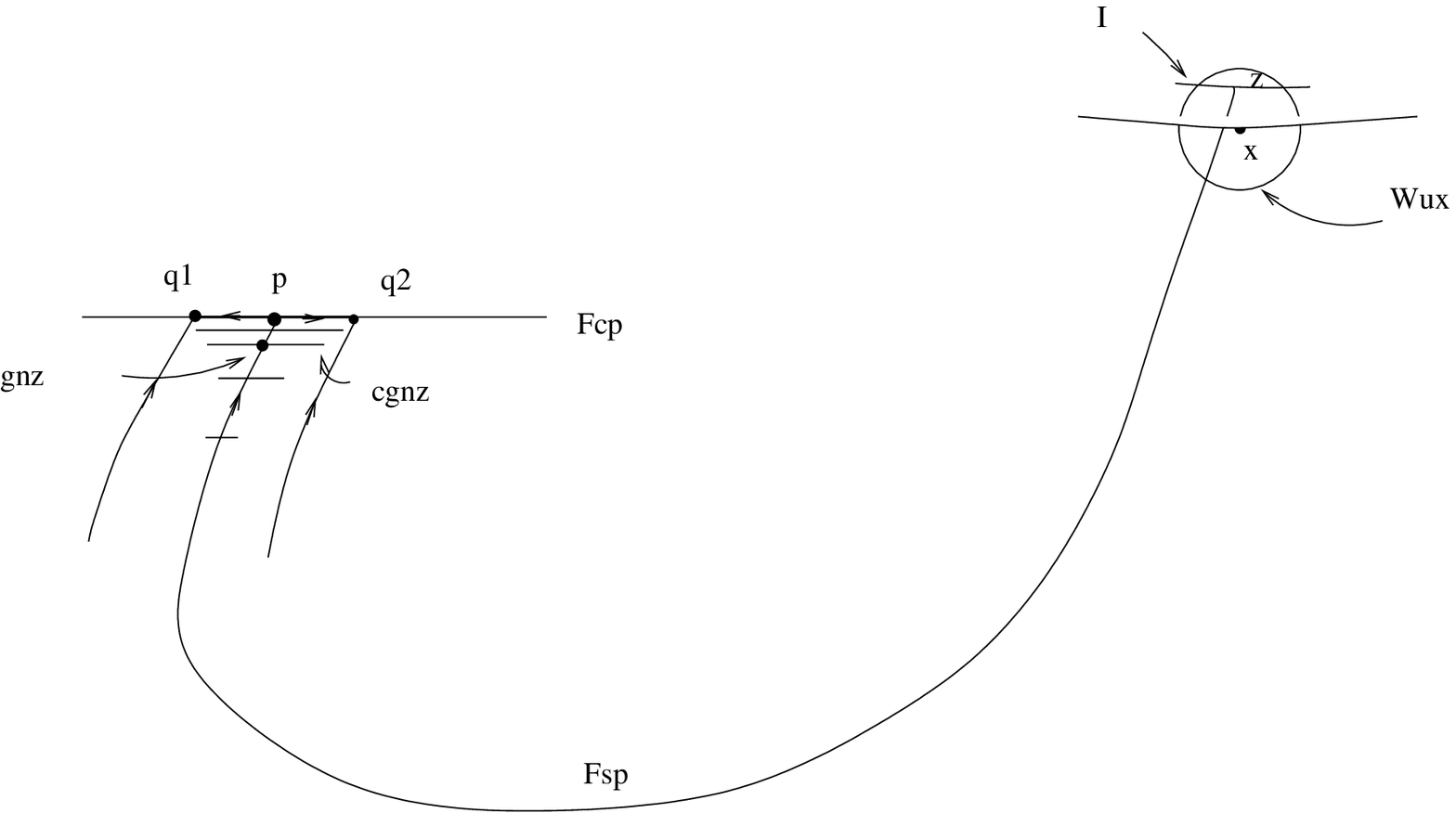}
\caption{}\label{zp}
\end{center}\end{figure}

Indeed, a more extensive result holds:

\begin{prop}\label{chaos}
Let $g\in \mathcal U(g_{B,k})$ having a transversal homoclinic point
associated to the fixed point  $=p$ of unstable index two. Then
there exists an uncountable set $\Lambda_0$ such that:
\begin{enumerate}
\item If $x,y\in \Lambda_0, x\neq y$ then $\F^{cu}_g(x)\neq \F^{cu}_g(y).$
\item For any $x\in \Lambda_0, [x]$ is nontrival.
\item There exists $\ep_0$ such that for any $x\in \Lambda_0$ and any $t>0$ there exists $z_x\in \F^{u,+}_g(x)\setminus \F^u_g(x,t)$ such that $\ell([z_x])>\ep_0.$
\end{enumerate}
\end{prop}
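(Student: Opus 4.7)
Lemma \ref{pclass} supplies the seed: $\ep_0 > 0$, $z_p \in \F^{u,+}_g(p)$, and a sequence $n_j \to \infty$ with $\ell(g^{n_j}([z_p])) > \ep_0$. Setting $y_j := g^{n_j}(z_p)$, the combined facts that $p$ is fixed, $g$ preserves the orientation of $\F^u_g$, and $g$ uniformly expands along $\F^u_g$ place each $y_j$ on $\F^{u,+}_g(p)$ with arc length from $p$ tending to $+\infty$. Thus $x = p$ already verifies (2) and (3) (taking $z_x = y_j$ for $j$ large enough depending on $t$), and the task is to bootstrap this single point into an uncountable family in pairwise distinct $cu$-leaves.

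I would first establish that $\Lambda := \{x \in \T^3 : \ell([x]) \geq \ep_0\}$ is closed. Given $x_n \to x_\infty$ with $\ell([x_n]) \geq \ep_0$, the arcs $[x_n]$ are central arcs of diameter at most $2C\sqrt{k}$ (by Corollary \ref{puntosequivalentes}) and length at least $\ep_0$; after a subsequence they Hausdorff-converge to a central arc $I \ni x_\infty$ of length $\geq \ep_0$, and continuity of $h$ collapses $h(I)$ to the single point $h(x_\infty)$, so $I \subseteq [x_\infty]$. In particular, every $\T^3$-accumulation point of $(y_j)$ lies in $\Lambda$.

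The main step is to exhibit an uncountable $\Lambda_0 \subseteq \Lambda$ in pairwise distinct $cu$-leaves. I would combine the transverse homoclinic intersection at $p$ of unstable index $2$ (Corollary \ref{homo}) with the minimality of $\F^u_g$ (Theorem \ref{uminimal}): the homoclinic tangle at $p$ yields, via Birkhoff--Smale, a compact invariant Cantor set $\Lambda_H$ on which some iterate $g^N$ is topologically conjugate to a shift, so $\Lambda_H$ is uncountable and its orbits can be prescribed by arbitrary symbolic codes. To see that uncountably many such orbits lie on pairwise distinct $cu$-leaves, I would pass to the universal cover and use Theorem \ref{foliaciones}: lifted $cu$-leaves are graphs over planes parallel to $E^{cu}_B$, so a short arc $\tau \subseteq \T^3$ tangent to $E^s_g$ is transverse to $\F^{cu}_g$, and distinct $cu$-leaves meet $\tau$ in disjoint discrete sets. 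Coding the escape trajectory of $\F^{u,+}_g(p)$ against the stable direction decomposes $\Lambda_H$ (intersected with a suitable neighborhood of $[p]$) across uncountably many different cu-leaves. Each such orbit visits a neighborhood of $[p]$ infinitely often, and Corollary \ref{picclass} ensures that these visits produce, on the associated unstable leaf, equivalence classes of length $> \ep_0$. I let $\Lambda_0$ consist of one representative per $cu$-leaf thus obtained.

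Conditions (1) and (2) then hold for $x \in \Lambda_0$ by construction. For (3), note that each $x \in \Lambda_0$ carries a prescribed itinerary returning arbitrarily close to $p$; at each return time, the formula in Corollary \ref{picclass} expressing $[z]$ for $z \in \F^u_g(x)$ as $\F^c_g(z) \cap \bigcup_{y \in [x]} \F^u_g(y)$, combined with the fact that $[x]$ is a central arc of length $\geq \ep_0$, produces a point $z_x^{(k)} \in \F^{u,+}_g(x)$ arbitrarily far from $x$ along $\F^u_g(x)$ with $\ell([z_x^{(k)}]) > \ep_0$. The hard part of the argument will be the third step: simultaneously guaranteeing that uncountably many orbits of $\Lambda_H$ land on pairwise distinct $cu$-leaves \emph{and} that each retains the uniform length bound $\ep_0$ at its return times to $[p]$. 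This forces one to weave together the quantitative output of Lemma \ref{pclass} with the fine transverse geometry of $\F^{cu}_g$ versus $\F^s_g$ supplied by Theorem \ref{foliaciones}, and this coupling is the real content of the proof.
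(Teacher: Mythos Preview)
Your horseshoe instinct is right: the paper also takes $\Lambda$ to be the hyperbolic set of unstable index $2$ supplied by Birkhoff--Smale at the transverse homoclinic point, and extracts $\Lambda_0$ from it. But your route to items (1) and (2) is more roundabout than needed, and your mechanism for (3) has a genuine gap. For (2) you try to trap $\Lambda_0$ inside the closed set $\{x:\ell([x])\ge\ep_0\}$ via Hausdorff limits, yet you never verify your representatives actually lie there. The paper bypasses this: every $x\in\Lambda$ has $E^c_g(x)$ uniformly expanded by $dg$ (unstable index $2$), so $\liminf_{n\to-\infty}\tfrac{1}{n}\log\|dg^n_{/E^c_g(x)}\|>0$ and Lemma~\ref{full}(1) gives $[x]\supsetneq\{x\}$ immediately. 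For (1) the paper simply notes that the two-dimensional $W^u_\delta(x)\subset\F^{cu}_g(x)$ and that the horseshoe carries uncountably many disjoint such manifolds, hence uncountably many distinct $cu$-leaves; your ``coding the escape trajectory'' is unnecessary.

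The real gap is (3). Your mechanism---forward returns of the orbit of $x$ near $p$, then Corollary~\ref{picclass}---does not produce a point \emph{on $\F^{u,+}_g(x)$ itself} at prescribed arc-distance $>t$: returns of $g^n(x)$ near $p$ say nothing about points on the same strong-unstable leaf as $x$. The paper's device is different. By local product structure there is $L>0$ with $\F^s_g(p,L)\cap W^{u,+}_\delta(x)\neq\emptyset$ for every $x\in\Lambda$ off the (countably many) periodic $cs$-leaves. Given $t$, iterate backward so that $g^{-n_1}(\F^{u,+}_g(x,t))$ is tiny inside $W^{u,+}_\delta(g^{-n_1}(x))$, pick $w$ in the above intersection for $g^{-n_1}(x)$, and observe that since $w\in\F^s_g(p)$ the arc $g^n([w])$ accumulates on $[p]$, giving $\ell(g^{n_1}[w])>\ep_0$ for all $n_1$ beyond a uniform threshold. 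Then $z_x:=g^{n_1}(y)$, where $y=[w]\cap\F^{u,+}_g(g^{-n_1}(x))$, lies in $\F^{u,+}_g(x)\setminus\F^{u,+}_g(x,t)$ with $\ell([z_x])>\ep_0$. The coupling you were worried about comes from $\F^s_g(p)$ meeting \emph{every} local unstable disk of $\Lambda$, not from recurrence of individual orbits.
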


\begin{proof}
From the existence of a transversal homoclinic point associated to
$p$ of index two we conclude the existence of a non trivial
hyperbolic compact invariant set $\Lambda$ (of unstable index $2$)
and with local product structure. In particular, from Lemma
\ref{full} we get that for any $x\in \Lambda, [x]$ is nontrival.

Notice that for  $x\in\Lambda, W^u(x)$ is two dimensional and
contained in $\F^{cu}_g(x)$ and there exists some $\delta>0$ such
that $W^u_\delta(x)$ has uniform size.  We will denote by
$W^{u,+}_\delta(x)$ the component of $W^u_\delta(x)\setminus
\F^c_g(x)$ which is in the positive direction of $\F^u_g(x).$
Moreover, there is an uncountable number of disjoints unstable
manifolds $W^u.$ Furhtermore, there is some $L$ such that (setting
$\F^s_g(p,L)=W^s_L(p)$) we have that $$\F^s_g(p,L) \cap
W^u_\delta(x)\neq\emptyset\,\,\,\,\forall\,\,\,x\in\Lambda.$$
Indeed, it is not difficult to see that if $x$ is not in a central
stable periodic leaf, then
$$\F^s_g(p,L) \cap W^{u,+}_\delta(x)\neq\emptyset.$$
Let us see a consequence of the above fact. Let $z\in\F^s_g(p,L)
\cap W^{u,+}_\delta(x)$ and let $\ep_0<\ell([p])/2.$ Since
$g^n(z)\to_n p$ we conclude that $\ell(g^n[z])=
\ell([g^n(z)])>\ep_0$ for $n$ large enough (see Figure \ref{zp}).
Indeed, $[z]$ is a central arc of uniforme size and therefore, and
since exists $m_0$ such that $g^{m_0}(z)\in W^s_{loc}(p)$ we have
that $g^{m_0}([z])$ is central arc of uniform size in
$\F^{cs}_{loc}(p)$. Now, by forward iteration, we have that
$\ell(g^n([z]))>\ep_0$ for all $n\ge m_1$ for some $m_1$ (which is
independent of $x$).

Now choose an uncountable set $\Lambda_0\subset \Lambda$ such that
for $x\neq y\in \Lambda_0$ we have $\F^{cu}_g(x)\neq\F^{cu}_g(y)$
and that for any $x\in \Lambda_0$ then $x$ is not in a periodic
central stable leaf. It remains to prove (3). Let $x\in\Lambda_0$
and let $t>0$, and choose $n_1$ bigger than $m_1$ such that
$g^{-n_1}(\F^{u,+}_g(x,t))\subset W^u_\eta(g^{-n_1}(x))$ where
$\eta$ is such that $W^u_\eta(g^{-n_1}(x))\cap
\F^s_g(x,L)=\emptyset.$ Let $w\in \F^s_g(p,L) \cap
W^{u,+}_\delta(g^{-n_1}(x))).$ It follows that $[w]\cap
\F^{u,+}_g(g^{-n_1}(x)))\neq\emptyset$ and set $y$ the point of
intersection. Notice that in one hand $y\notin g^{-n_1}(\F^{u,+}_g(x,t))$ and
therefore $z_x=g^{n_1}(y)\in \F^{u,+}_g(x)\setminus\F^{u,+}_g(x,t).$
On the other hand
$$\ell([z_x])=\ell([g^{n_1}(y)])=\ell(g^{n_1}([y]))=\ell(g^{n_1}([z]))>\ep_0.$$

\end{proof}

Finally, we will give a result for $g$ as in Corollary \ref{homo}, which is fundamental in order to get Li-Yorke chaos:

\begin{prop}\label{lychaos}
Let $g$ as in Corollary \ref{homo} and let $p_1,p_2$ any two distinct points in $J$. Then, there exists $w\in J$ between $p_1$ and $p_2$, $z\in\F^{u,+}(w)$ and a nontrivial arc $I_c\subset[z]$ such that
\begin{enumerate}
\item $I_c\subset\left(\bigcup_{y\in[p_1,p_2]}\F^u_g(y)\right)\bigcap [z].$
\item $\lim_{k\to\infty}\ell(g^{km}(I_c))>0$ (where $m$ is given in the corrolary \ref{homo}). 
\item $g^{km}(I_c)\subset \left(\bigcup_{y\in[p_1,p_2]}\F^u_g(y)\right)$ for all $k$
\end{enumerate}
\end{prop}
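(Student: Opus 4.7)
The strategy is to exploit that the arc $J$ is pointwise fixed by $g^{m}$ in order to construct a natural two-dimensional $g^{m}$-invariant ``tube'' over $[p_1,p_2]$ whose sections by central leaves furnish the required arc $I_c$.

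Since $p_1,p_2\in J$ have bounded $g$-orbits, Corollary~\ref{puntosequivalentes} places them in a common $h$-fibre, which is an arc in the central leaf through $J$; thus the whole arc $[p_1,p_2]\subset J$ lies in $[w]$ for every $w\in[p_1,p_2]$ and is pointwise fixed by $g^{m}$. Define the tube
\[ T := \bigcup_{y\in[p_1,p_2]}\F^{u}_g(y). \]
Since $g^{m}$ fixes $[p_1,p_2]$ pointwise and preserves $\F^{u}_g$, for every $y\in[p_1,p_2]$ one has $g^{m}(\F^{u}_g(y))=\F^{u}_g(g^{m}(y))=\F^{u}_g(y)$, so $T$ is $g^{m}$-invariant; moreover $T$ sits inside the center-unstable leaf $\F^{cu}_g(J)$ through $J$.

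Choose $w$ to be the midpoint of $[p_1,p_2]$ and $z\in\F^{u,+}_g(w)$ close enough to $w$ in the intrinsic metric of $\F^{cu}_g(J)$ that the local unstable holonomy $\pi_z:\F^{c}_g(w)\to\F^{c}_g(z)$, $\pi_z(y):=\F^{u}_g(y)\cap\F^{c}_g(z)$, is defined as a $C^{1}$-diffeomorphism on a neighborhood of $[p_1,p_2]$. Set $I_c:=\pi_z([p_1,p_2])$, a nontrivial $C^{1}$ arc in $\F^{c}_g(z)$. Condition~(1) is then immediate: each $\pi_z(y)$ lies in $\F^{u}_g(y)\cap\F^{c}_g(z)$ so $I_c\subset\bigcup_{y\in[p_1,p_2]}\F^{u}_g(y)$, and since $[p_1,p_2]\subset[w]$, Corollary~\ref{picclass} yields $I_c\subset[z]$. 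For~(3), from $g^{km}(\F^{u}_g(y))=\F^{u}_g(y)$ for $y\in[p_1,p_2]$ and $g^{km}(\F^{c}_g(z))=\F^{c}_g(g^{km}(z))$, one gets $g^{km}(\pi_z(y))=\pi_{g^{km}(z)}(y)$, so $g^{km}(I_c)=\pi_{g^{km}(z)}([p_1,p_2])\subset T$.

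The remaining point~(2) is the main obstacle. Each iterate $g^{km}(I_c)=\pi_{g^{km}(z)}([p_1,p_2])$ is a $C^{1}$ section of the $g^{m}$-invariant fibration $T\to[p_1,p_2]$ sending $q\in T$ to the unique $y$ with $q\in\F^{u}_g(y)$. By compactness, the orbit $\{g^{km}(z)\}_{k\ge0}\subset\F^{u}_g(w)\subset T$ accumulates at some $z^{*}\in\T^{3}$, and along a subsequence $k_n$ the arcs $\pi_{g^{k_nm}(z)}([p_1,p_2])$ converge to $\pi_{z^{*}}([p_1,p_2])$, which has positive length provided the limiting holonomy is nondegenerate. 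To secure nondegeneracy I would either (i)~choose $z$ to be a $g^{m}$-recurrent point on $\F^{u}_g(w)$ by Poincar\'e recurrence, so that $g^{k_nm}(z)\to z$ and $\ell(g^{k_nm}(I_c))\to\ell(I_c)>0$ by continuity, or (ii)~argue that the transversality between $E^{c}_g$ and $E^{u}_g$ together with the $C^{1}$ character of the two sub-foliations within $\F^{cu}_g(J)$ gives a uniform lower bound on the Jacobian of $\pi_{z'}$ along the compact fixed arc $[p_1,p_2]$ for every $z'\in\F^{u}_g(w)$, so that the lengths cannot collapse to zero.
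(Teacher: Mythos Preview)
Your handling of items (1) and (3) via the $g^m$-invariant tube $T=\bigcup_{y\in[p_1,p_2]}\F^u_g(y)$ is fine and matches the paper's reasoning. The gap is precisely where you flag it: item (2). Neither of your proposed fixes works. For (i), Poincar\'e recurrence does not apply: $g^m$ restricted to the invariant leaf $\F^u_g(w)$ is uniformly expanding, so it carries no invariant probability and the orbit $\{g^{km}(z)\}$ marches off to infinity along the leaf with no return to $z$. Invoking recurrence in $\T^3$ instead does not help, since $\F^u_g(w)$ has measure zero for every $g^m$-invariant measure. For (ii), there is no uniform lower bound on the Jacobian of the unstable holonomy $\pi_{z'}$ as $z'$ ranges over the non-compact leaf $\F^u_g(w)$: this is exactly the content of Corollary~\ref{classto0}, which says that along $\F^{u,+}_g(w)$ one finds classes $[y]$ (and hence sections $\pi_y([p_1,p_2])\subset[y]$) of arbitrarily small length. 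So your candidate arcs can genuinely collapse.

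The paper's key idea, which you are missing, is to choose $z$ not merely close to $w$ but \emph{homoclinically}: one takes $D^{cs}=\bigcup_{x\in\mathrm{int}(J)}\F^s_g(x)$ and uses minimality of $\F^u_g$ (Theorem~\ref{uminimal}) to find $z\in\F^{u,+}_g(w)\cap D^{cs}$. Then $I_c$ can be chosen inside $D^{cs}$, and since every point of $I_c$ lies on the strong stable leaf of some point of $J$ (which is fixed by $g^m$), the arcs $g^{km}(I_c)$ converge to a subarc of $J$ as $k\to\infty$. This gives an honest limit $\lim_{k\to\infty}\ell(g^{km}(I_c))>0$, not just a $\limsup$ along a subsequence.
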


\begin{figure}[ht]\begin{center}

\psfrag{Ic}{$I^c$} \psfrag{p1}{$p_1$}
\psfrag{p2}{$p_2$}  \psfrag{z}{$z$}

\psfrag{Fuw}{${\F}^{u}_g(w)$}
 \psfrag{J}{$J$}
\psfrag{D}{$D^{cs}$} \psfrag{w}{$w$}

\includegraphics[height=10cm]{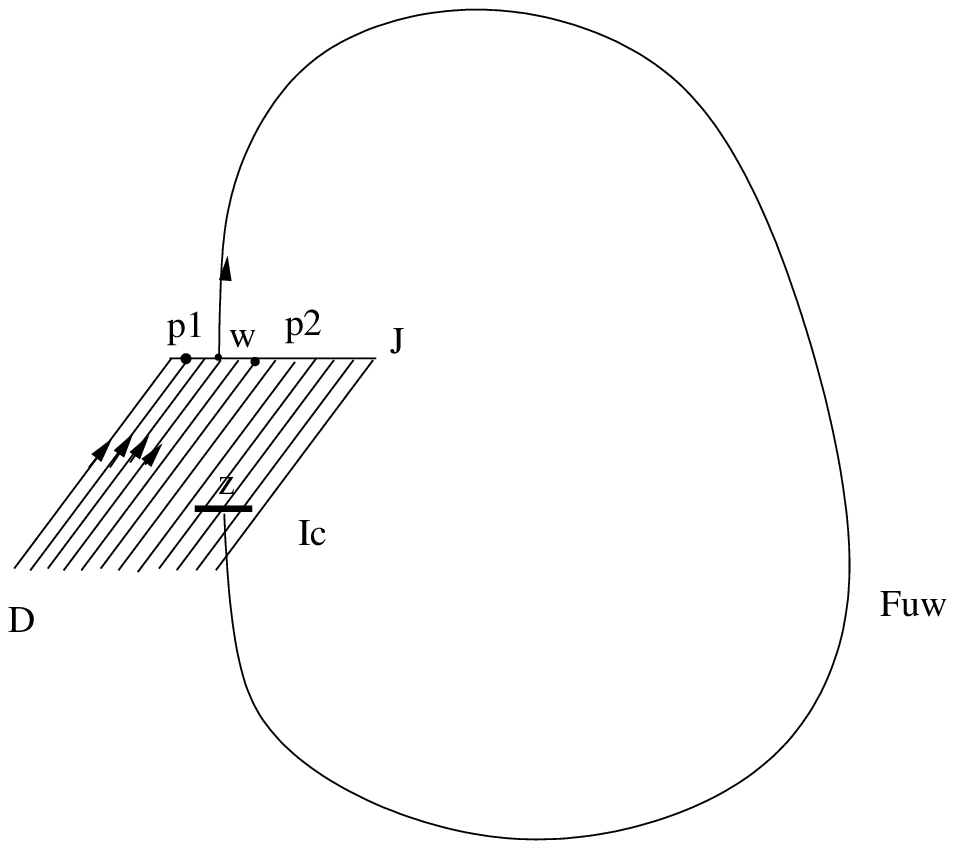}
\caption{}\label{ly}
\end{center}\end{figure}

\begin{proof}
Consider $D^{cs}=\bigcup_{x\in int(J)}\F^s_g(x)$ which contains a disc in a central stable manifold. Now, let $w\in J$ between $p_1$ and $p_2.$ Since the unstable foliation is minimal we have that $\F^u_g(w)\cap D^{cs}\neq\emptyset.$ Let $z$ be point in this intersection. Then, since $\bigcup_{y\in[p_1,p_2]}\F^u_g(y)\cap[z]$ contains $z$ in its interior (respect to $[z]$) we may find $I^c$ satisfying item (1) of the proposition and such that $I^c\subset D^{cs}.$ Since $g^m_{/J}\equiv Id$ we conclude also item (2) of the proposition. Since all points of $J$ are fixed by $g^m$, the unstable manifolds of points of $J$ are invariant by $g^m$ and so we conclude also (3).

\end{proof}

\section{The induced holonomy map on $\T^2$.}\label{secholonomy}

Let  $B\in SL(3,\Z)$ (with eigenvalues $0<\la_s<\la_c<1<\la_u$) and
$g_{B,k}$ defined in \eqref{id} and \eqref{g} and let $g\in\mathcal
U(g_{B,k})$ with $k$ and $\mathcal U$ small, in the conditions of Corollary \ref{homo}. In particular all results of the last section hold.

Consider a bidimensional torus transversal to $\F^u_B$ and (assuming
$k$ and $\mathcal U$ small) also transversal to $\F^u_g.$ For
instance, we may consider
$\T^2=\R^2_{/\Z^2}\subset\R^3_{/\Z^3}=\T^3.$

The foliations $\F^u_B$ and $\F^u_g$ are orientable and choose
similar orientation on both (that is, take unit vector fields
$X_B=e^u$ and $X_g$ close to $X_B).$

\begin{defi}\label{induced}
For $g$ as above we define $f=f_g:\T^2\to\T^2$ the holonomy map on
$\T^2$ induced by the unstable foliation $\F^u_g.$ In other words,
$f(x)$ is the first return map of $\F^u_g(x)$ to $\T^2$ in the given
orientation. Moreover, we can define $F:\T^3\to\T^2$ as the first
return to $\T^2$ of any $x\in\T^3$ along the positive orientation of
$\F^u_g(x).$
\end{defi}

\begin{obs}\label{dif}
Notice that the induced map $f=f_g$ is a homeomorphism. Morevoer,
$f$ is of class $C^r$ if the unstable foliation $\F^u_g$ is of class
$C^r$. Furthermore, the unstable foliation $\F^u_g$ is of class
$C^r$ if the unstable bundle $E^u_g$ is of class $C^r.$
\end{obs}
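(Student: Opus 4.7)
The plan is to address the three assertions of the remark in turn, all of which are standard once the structural results of the previous sections are in hand.

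\textbf{$f$ is a homeomorphism.} First I would argue that $f$ is well-defined on all of $\T^{2}$. By Theorem \ref{uminimal}, the foliation $\F^u_g$ is minimal, so every leaf $\F^u_g(x)$ meets $\T^{2}$ infinitely often in both the positive and negative orientations; thus the first-return point to $\T^{2}$ in the chosen orientation is defined at each $x\in \T^{2}$. Because $E^u_g$ lies in a thin cone around $E^u_B$ (by Proposition \ref{DA}) and $\T^{2}=\R^{2}/\Z^{2}\subset\T^{3}$ is transversal to $E^u_B$, the leaves cross $\T^{2}$ transversally, and the first return time is uniformly bounded (compactness of $\T^{2}$ plus transversality). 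Continuity of $f$ then follows from the standard flow-box argument for continuous foliations, and the same argument applied to the opposite orientation produces a continuous inverse, so $f$ is a homeomorphism.

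\textbf{$f$ is $C^{r}$ when $\F^u_g$ is $C^{r}$.} If $\F^u_g$ is a $C^{r}$ foliation, then around every $x\in\T^{2}$ one can choose a local $C^{r}$ foliation chart in which the leaves are straight segments. In such a chart the holonomy from one transversal to another is a $C^{r}$ diffeomorphism (the obvious graph map). Since the arc of $\F^u_g(x)$ running from $x$ to $f(x)$ has uniformly bounded length, $f$ is locally the composition of finitely many such local holonomies, hence is $C^{r}$ at $x$; applying this at every point of the compact manifold $\T^{2}$ gives $f\in C^{r}$.

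\textbf{$\F^u_g$ is $C^{r}$ when $E^u_g$ is $C^{r}$.} Since $E^u_g$ is a one-dimensional orientable sub-bundle of $T\T^{3}$, choosing an orientation provides a nowhere-vanishing unit vector field $X_g$ tangent to $E^u_g$; if $E^u_g\in C^{r}$ then so is $X_g$. By the classical theorem on smooth dependence of solutions to ODEs on compact manifolds, the flow $\varphi_t$ of $X_g$ is $C^{r}$ jointly in $(t,x)$. The leaves of $\F^u_g$ coincide with the orbits of $\varphi_t$, so the straightening maps $(t,\cdot)\mapsto \varphi_t(\cdot)$ furnish $C^{r}$ foliation charts, proving $\F^u_g$ is $C^{r}$.

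The only substantive step is the first one: once one has minimality of $\F^u_g$ and transversality to $\T^{2}$, the remaining items are immediate consequences of the flow-box theorem and smooth dependence of ODE solutions, so there is no real obstacle; the bookkeeping reduces to checking that return times are uniformly bounded, which is automatic by compactness of $\T^{2}$.
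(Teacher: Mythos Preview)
Your argument is correct and supplies exactly the kind of standard justification the paper leaves implicit: in the paper this statement is a bare remark with no accompanying proof, so there is nothing to compare against beyond noting that the authors clearly have the same folklore facts in mind (flow-box/holonomy for the first two claims, integrating a $C^r$ line field for the third).

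One small remark: invoking minimality of $\F^u_g$ to guarantee that every leaf returns to $\T^2$ is more than you need. The cone condition from Proposition~\ref{DA} already forces the $E^u_B$-component (the ``$z$-coordinate'' in the lift) to be strictly monotone along each unstable leaf, so each leaf crosses every integer height and hence returns to $\T^2$; this gives well-definedness and uniform boundedness of return times directly, without appealing to Theorem~\ref{uminimal}. Your route via minimality is of course still valid.
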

 Besides, if
we consider the holonomy map $T_B:\T^2\to\T^2$ induced by $\F^u_B$
we obtain that $T_B$ is a minimal (and hence ergodic) translation.
Moreover, $f=f_g$ and $T_B$ are close as we wish if $k$ is small.

If we apply the results on the previous section we obtain the
topological version of our main result:

\begin{teo}\label{maintopologico}
For $g:\T^3\to\T^3$ as above and $f=f_g:\T^2\to\T^2$ and
$T_B:\T^2\to\T^2$ as above we have:
\begin{enumerate}\renewcommand{\theenumi}{\roman{enumi}}
\item\label{mini} $f$ is minimal.

\item\label{iso} $f$ is isotopic and semiconjugated to the  ergodic translation
$T_B.$ If we denote by $\h$ the semiconjugacy, then $\h^{-1}(x)$ is
either a point or an arc.  Moreover, there are uncountable points
$x$ such that $\h^{-1}(x)$ is a nontrivial arc.

\item\label{lami} $f$ preserves a minimal and  invariant $C^0$ foliation with one dimensional $C^1$  leaves. The fibers $\h^{-1}(x)$ are contained in the leaves of this foliation.

\item\label{entro} $f$ has zero entropy.

\item\label{dis} $f$ is point-distal non-distal.

\item\label{liyorke} $f$ exhibits Li-York chaos.
\item\label{sens} $f$ has sensitivity with respect to initial conditions.

\item\label{ue} $f$ is uniquely ergodic.
\end{enumerate}
\end{teo}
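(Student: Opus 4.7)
The plan is to derive each of the eight properties of $f$ from the structural results of Sections \ref{secmane}--\ref{secanalysis} via the fact that $f$ is the first-return map to $\T^2$ along $\F^u_g$, combined with the semiconjugacy $h:\T^3\to\T^3$ from $g$ to $B$. I would first construct the induced semiconjugacy $\h:\T^2\to\T^2$ by $\h(x)=\pi(h(x))$, where $\pi$ projects a neighborhood of $\T^2$ in $\T^3$ onto $\T^2$ along $\F^u_B$; this is well defined because $h$ is $C^0$-close to the identity. The intertwining $\h\circ f=T_B\circ\h$ required by (\ref{iso}) follows from $h\circ g=B\circ h$ together with $h(\F^u_g(x))=\F^u_B(h(x))$ of Theorem \ref{foliaciones}, once compatible orientations of the two unstable foliations are fixed. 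Fibers of $\h$ correspond, after this projection, to the classes $[x]\subset\F^c_g(x)$ of Corollary \ref{puntosequivalentes}, so each is a point or an arc; the uncountably many nontrivial ones come from the set $\Lambda_0$ of Proposition \ref{chaos}.

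With $\h$ in hand, items (\ref{mini}), (\ref{lami}), (\ref{dis}) and (\ref{ue}) follow directly. Minimality (\ref{mini}) is the fact that $\{f^n(x)\}=\F^u_g(x)\cap\T^2$ is dense in $\T^2$, by minimality of $\F^u_g$ (Theorem \ref{uminimal}). For (\ref{lami}) the invariant foliation is the trace $\F^{cu}_g\cap\T^2$: each leaf is $f$-invariant because $f(x)\in\F^u_g(x)\subset\F^{cu}_g(x)$; the leaves are one-dimensional $C^1$ curves (intersection of the $C^1$ two-dimensional foliation $\F^{cu}_g$ with the smooth transversal $\T^2$); they are dense because each $\F^{cu}_g$-leaf contains a dense $\F^u_g$-leaf; and the $\h$-fibers sit inside them because $[x]\subset\F^c_g(x)\subset\F^{cu}_g(x)$. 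For (\ref{dis}), non-distality is immediate from any pair on a nontrivial fiber combined with Corollary \ref{classto0}, and point-distality from any $x$ with trivial $\h$-fiber (which exists by Lemma \ref{full}(2)) using distality of the translation $T_B$. For (\ref{ue}), any $f$-invariant probability pushes through $\h$ to the unique $T_B$-invariant measure (Haar), and since by Lemma \ref{full}(2) the set of points with trivial $\h$-fiber has full Haar measure, the lift back is unique, so the $f$-invariant measure is determined.

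The chaos and entropy items (\ref{entro}), (\ref{liyorke}), (\ref{sens}) use the squeeze-stretch behavior of the nontrivial fibers. For (\ref{entro}) I would apply Bowen's inequality $h(f)\le h(T_B)+\sup_{\tilde z}h(f,\h^{-1}(\tilde z))$: the base entropy vanishes since $T_B$ is a translation, and the fibered entropy vanishes because the $f$-iterates of a nontrivial fiber are confined to arcs of bounded length that alternately contract and stretch (Corollary \ref{classto0}). For (\ref{liyorke}), Proposition \ref{lychaos} produces an arc $I^c$ whose $g^{km}$-iterates preserve a definite length while nearby pieces collapse; a standard Cantor-set construction of pairs of points inside successive images of $I^c$ yields an uncountable scrambled set in $\T^2$. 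For (\ref{sens}), I would invoke the dichotomy that a minimal system is either equicontinuous or sensitive, noting that $f$ cannot be equicontinuous because it is semiconjugate but not conjugate to the equicontinuous $T_B$ (nontrivial fibers from Proposition \ref{chaos}).

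The principal obstacle is making (\ref{iso}) completely precise: showing that each $\h$-fiber is a single arc rather than the countable collection of points one would naively obtain from $\bigcup_{y\in[x]}\F^u_g(y)\cap\T^2$. The projection $\pi$ must select exactly one intersection per unstable leaf in a continuous, coherent way, which relies on the continuity of the unstable holonomy along the central arc $[x]$ and on choosing the ``nearest return'' in a consistent manner across all of $\T^2$. A secondary technical point, used in (\ref{lami}), is the $C^1$ regularity of $\F^{cu}_g$-leaves; for our $C^\infty$ choice of $g$ (Corollary \ref{homo}) this is standard invariant-manifold theory and is in any case reproved in Section \ref{secdif}.
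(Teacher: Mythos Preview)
Your outline matches the paper's proof for (\ref{mini}), (\ref{iso}), (\ref{entro}), (\ref{dis}) and (\ref{ue}): the construction $\h=P\circ h|_{\T^2}$ with $P$ the local projection along $\F^u_B$, Bowen's inequality for the entropy, and the push-forward argument for unique ergodicity are exactly what the paper does.

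Three items differ. For (\ref{liyorke}) you overcomplicate: no Cantor construction is needed. The paper observes that the projection $\tilde J\subset\T^2$ of the periodic arc $J$ of Corollary~\ref{homo} is \emph{itself} the uncountable scrambled set. Corollary~\ref{classto0} gives $\liminf_n\ell(f^n(\tilde J))=0$, hence $\liminf_n d(f^n(\tilde p_1),f^n(\tilde p_2))=0$ for every pair; and Proposition~\ref{lychaos}, applied to an arbitrary pair $p_1,p_2\in J$, produces an arc $I_c$ sandwiched between $\F^u_g(p_1)$ and $\F^u_g(p_2)$ with $\lim_k\ell(g^{km}(I_c))>0$, which the holonomy carries to $\limsup_n d(f^n(\tilde p_1),f^n(\tilde p_2))>0$. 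You seem to have missed that Proposition~\ref{lychaos} is already a statement about \emph{every} pair in $J$.

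For (\ref{sens}) you take a genuinely shorter route via the Auslander--Yorke dichotomy (minimal $\Rightarrow$ equicontinuous or sensitive), ruling out equicontinuity via a Li--Yorke pair. The paper instead argues directly: by minimality of $f$ and lower semicontinuity of the classes at a point with trivial fibre, some iterate $f^{m}(I_p)$ of the nontrivial fibre over $p$ lands inside any prescribed neighbourhood $U$, and then $\limsup_n\ell(f^n(I_p))\ge\ep_1$ furnishes the separation. Your argument is cleaner; the paper's is self-contained.

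For (\ref{lami}) there is a small gap: saying that the leaf $\mathcal{C}(x)$ is dense ``because $\F^{cu}_g(x)$ contains a dense $\F^u_g$-leaf'' only shows that the full intersection $\F^{cu}_g(x)\cap\T^2$ is dense, not any single connected component of it. The paper closes this by observing that $\h(\mathcal{C}(x))$ is a leaf of the irrational line foliation induced by $\F^{cu}_B$ on $\T^2$, which is minimal, and concluding as in Corollary~\ref{cmini}.
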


\begin{proof}
(\ref{mini}) follows from the minimality of the unstable foliation
$\F^u_g$ (see section \ref{secmini}).

Let's prove \eqref{iso}. Since $f$ and $T_B$ are $C^0$ close, we get
that $f$ and $T_B$ are isotopic. Recall $h$ to be the semiconjugacy
between $g=g_{B,k}$ and $B:\T^{3}\to \T^{3}$ given in section
\ref{secsemiup}.

Since $dist_{C^0}(h,id)<C\sqrt{k}$ (which we may assume less than
$1/4$), for every point in $h(\T^{2})$ we can define a natural
projection $P:h(\T^{2})\to\T^{2}$ along the unstable foliation
$\F^u_B$, that is $P(h(x))$ is the closest within $\F^u_B(h(x))$ in
$\T^2$ to $h(x).$ Define
$$\h:\T^2\to\T^2 \,\,\,\,\,\,\,\,\h(x)=P(h(x)).$$
Clearly, $\h$ is continuous and close to the identity (if $k$ is
small) and hence onto (and isotopic to the identity as well).

Now, if we take $x\in\T^{2}$ and $f(x)\in \T^{2}$ we have that they
are the ends of an arc $I^u\subset \F^u_g(x)$  and when lifted to
$\R^3$ their coordinates have $z$-difference equal to one.

On the other hand $h(I^u)$ is an arc (segment) of $\F^u_B(h(x))$ so
that, when lifted to $\R^3$ the ends have coordinates whose
$z$-difference is between $1-2C\sqrt{k}$ and $1+2C\sqrt{k}.$
Therefore $P(h(f(x))=T_B(\h(x))$ that is $$\h\circ f= T_B\circ \h.$$

Notice that
\begin{itemize}

\item If $h^{-1}(x)=\{y\}$ then  clearly holds that
$\h^{-1}(x)$ is a unique point.

\item If $h^{-1}(x)$ is a non trivial central arc, then the projection (by $P$) on $\T^{2}$
is a non trivial arc and it is  $\h^{-1}(x)$.

\end{itemize}
Moreover, by Proposition \ref{chaos}, we get that there are an
uncountable  number of points $x$ such that $\h^{-1}(x)$ is a
nontrivial arc. This finishes the proof of  (\ref{iso}).

To prove (\ref{lami}), for $x\in\T^2$ let $\C(x)$ the connected
component that contains $x$ of $\F^{cu}_g(x)\cap \T^2.$ It follows
that $\C$ is a continuous foliations with $C^1$ dimensional leaves
(recall that $\F^{cu}_g(x)$ is a $C^1$ manifold) and obviously
invariant by $f$, the holonomy map. Furhtermore, since
$h(\F_g^{cu})(x)=\F^{cu}_B(h(x))$ it follows that $\h(\C(x))$ is the
connected component of $\F^{cu}_B(\h(x))\cap\T^2$ which contains
$\h(x).$ Since this foliation by lines on $\T^2$ is minimal we also
conclude that $\C$ is minimal (similar proof as in Corollary
\ref{cmini}). Since $h^{-1}(x)$ live in a central unstable leaf, we
get that $\h^{-1}$ live in the leaves of this foliations.

The proof of \eqref{entro} is rather easy. Indeed, by Bowen's
formula (\cite{Bo}) we have
$$h_{top}(f)\le h_{top}(T_B)+\sup_{x\in\T^2}h_{top}(f,\h^{-1}(x))$$
where $h_{top}(f,K)=\lim_{\ep\to
0}\limsup_{n\to\infty}\frac{1}{n}\log N(\ep,n,f,K)$ and
$N(\ep,n,f,K)$ is the minimum cardinality of $(n,\ep)$ separated set
in $K.$ Since for all $x$, $\h^{-1}(x)$ is either a point or an arc
(with bounded length in the future and in the past) we have the
result (see also \cite{BSFV}).

Let us prove (\ref{dis}). Recall that $f$ is point distal if there
exists $x\in\T^2$ such that for every $y\neq x$ there exists $r_y>0$
so that $r_y\le\inf\{dist(f^n(x),f^n(y)):n\in\Z\}$ and $f$ is non
distal if there exists a pair of points $z,w$ such that
$\inf\{dist(f^n(z),f^n(w)):n\in\Z\}=0.$ We will show first that $f$
is point distal. Let $x\in\T^2$ be such that $\h^{-1}(\h(x))=\{x\}$
and consider any $y\in\T^2.$  Let $\alpha=dist(\h(x),\h(y))$. By the
(uniform) continuity of $\h$, there exists $r$ such that if
$dist(z,w)<r$ then $dist(\h(z),\h(w))<\alpha$ for any $z,w\in\T^2.$
We claim that $\inf\{dist(f^n(x),f^n(y)):n\in\Z\}\ge r>0.$
Otherwise, if for some $n$ we have $dist(f^n(x),f^n(y))<r$ then we
get (since $T_B$ is an isometry):
$$\alpha>\,dist(\h(f^n(x)),\h(f^n(y)))\,=\,dist(T_B^n(\h(x)),T_B(\h(y)))=dist(\h(x),\h(y))=\alpha.$$

Now, we will prove that $f$ is non-distal. We will go back to
$g=:\T^3\to\T^3$  and let $x$ such that
$[x]=h^{-1}(h(x)\supsetneq\{x\}.$ Let $I_x=F([x])$ the first return
map to $\T^2$ of $[x]$ along the unstable foliation $\F^u_g$ if
$[x]\cap \T^2=\emptyset,$ otherwise, set $I_x=P([x]).$  From
Corollary \ref{classto0} we know that for any $\eta$ there exists
$y\in\F^{u,+}_g(x)$ such that $\ell([y])<\eta.$ On the other hand,
since  $\displaystyle{[y]=\bigcup_{z\in[x]}\F^{u,+}_g(z)\bigcap
\F^c_g(y)}$ we have that there exists $n_y$ such that
$f^{n_y}(I_x)=I_y$. It follows that
$\liminf_{n\to\infty}(f^n(I_x)))=0.$ Finally, if we take $z\neq w\in
I_x$ we conclude that $\inf\{dist(f^n(z),f^n(w)):n\in\Z\}=0,$ i.e.,
$f$ is non-distal.

We prove now \eqref{liyorke}. Consider $J$ the arc as in Corollary \ref{homo}. Notice that for any point $x\in J, [x]\supseteq J.$ Let $\tilde{J}=F(J)$ the first return
map to $\T^2$ of $J$ along the unstable foliation $\F^u_g$ if
$J\cap \T^2=\emptyset,$ otherwise, set $\tilde{J}=P(J).$ As above, $\liminf_{n\to\infty}f^n(\tilde{J})=0.$ On the other hand, given any two points $p_1,p_2$ in $\tilde{J}$ and applying Proposition \ref{lychaos} we conclude that $\limsup_{n\to\infty}d(f^n(p_1),f^n(p_2))>0$.

For the proof of \eqref{sens}, recall that $f$ has sensitivity with
respect to initial conditions if there exists some $\ep_2$ such that
for any $x\in\T^2$ and any open set $U$ containing $x$ there exist
$y\in U$ and $n>0$ such that $dist(f^n(x),f^n(y))\ge \ep_2.$ So,
given $\ep_1$ let $\ep_2$ be such that any arc in $\mathcal C$ of
length $\ep_1$ then the endpoints are at distance at least $2\ep_2.$
Let $x$ and $U$ be given. Assume first that $\h^{-1}(\h(x))=\{x\}$
which is the same as $[x]=\{x\}.$ Since $f$ is minimal we have that
there is $m_k$ such that $f^{m_k}(p)\to_k x.$ We claim that for $k$
large enough $f^{m_k}(I_p)\subset U.$ Indeed, it follows that
$\ell(f^{m_k})\to 0,$ otherwise we conclude that $[x]\neq \{x\}$
(the equivalent classes are lower semicontinuous). Thus, choose some
$m$ so that $f^m(I_p)\subset U.$ Since $\limsup \ell(f^n(I_p))\ge
\ep_1$ we get the result taking $y$ as the appropriate end point of
$f^m(I_p).$ Now, if $[x]$ is non trivial we can argue as before,
since in $U$ there are points $z$ such that $[z]$ si trivial and so
for some $m$ we have that $f^m(I_p)\subset U.$

It is left to prove \eqref{ue}. Consider the set
$$\tilde{\mathcal{A}}=\{x\in\T^2:\h^{-1}(x)\mbox{ is a point}\}.$$
Observe that $\h^{-1}(x)$ is a point if and only if $h^{-1}(x)$ is a
point. Moreover, if $h^{-1}(x)$ is just a point the same is true for
any $y\in\F^u_B(x).$ Therefore, since
$$\mathcal{A}=\{x\in\T^3:h^{-1}(x)\mbox{ is a point }\}$$
has full lebesgue measure on $\T^3$ by Lemma \ref{full} we get that
$\tilde{\mathcal{A}}$ has full lebesgue measure on $\T^2.$

Denote  by $\mathcal{M}(f)$ the set of invariant probabilities of
$f$. Given $\mu\in \mathcal{M}_f$ we may define a measure $\nu\in
\mathcal{M}_{T_B}$ by $\nu(A)=\mu(h^{-1}(A))$. Since $T_B$ is
uniquely ergodic, $\nu=m$ (the lebesgue measure  on $\T^2$). That
is, for every borelean set $D$ and $\mu\in \mathcal{M}_f$ we have
$\mu(h^{-1}(D))=m(D)$. And therefore, for every $\mu\in
\mathcal{M}_f$, setting $\mathcal{D}=\h^{-1}(\tilde{\mathcal{A}})$
we have
$$\mu(\mathcal{D})=\mu(\h^{-1}(\tilde{\mathcal{A}}))=m(\tilde{\mathcal{A}})=1.$$
Observe that for any Borel set $A$ we have
$A\cap\mathcal{D}=\h^{-1}(\h(A\cap\mathcal{D}).$

Given $\mu_1,\mu_2\in \mathcal{M}_f$ and $A$ any Borel set we have
\begin{eqnarray*}
\mu_1(A)&=&\mu_1(A\cap \mathcal{D})=\mu_1(\h^{-1}(\h(A\cap
\mathcal{D})))=m(\h(A\cap \mathcal{D}))\\&=&\mu_2(\h^{-1}(h(A\cap
\mathcal{D})))=\mu_2(A\cap \mathcal{D})\\&=&\mu_2(A).
\end{eqnarray*}
Thus $f$ is uniquely ergodic.

\end{proof}

\begin{obs}
If $f$ were of class $C^2$ and the leaves of the foliation $\mathcal
C$ also were of class $C^2$ one is tempted to use Schwarz's argument
(\cite{Sch}) to show that non trivial fibers of $\h^{-1}$ are  not
possilbe. However, in our case there is extra difficulty: we don´t
know \textit{a priori} that the sum of the length of the iterates of
a nontrivial fiber (if exists) does converge. In our examples, this
sum does not converge!

Let us point out as well that with our method, the differentiability
of the sysetm and of the foliation are like the dishes on a balance.
More differentiability ensured for the system implies less ensured
for the folliation.
\end{obs}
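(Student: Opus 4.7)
The observation contains two assertions: \textbf{(A)} that in our examples $\sum_n \ell(f^n(I))$ diverges for some nontrivial fiber $I=\h^{-1}(\h(x))$, which blocks the naive import of Schwarz's argument; and \textbf{(B)} a qualitative trade-off between the regularity of $f$ and of the invariant foliation $\mathcal{C}$. Only (A) carries precise mathematical content; (B) is a heuristic about the construction method. My plan is to prove (A) directly and then comment briefly on how (B) would be quantified.

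For (A), the key ingredients are already in place. By Lemma \ref{pclass} there exist $z_p\in\F^{u,+}_g(p)$ and $\ep_0>0$ with
$$\limsup_{n\to\infty}\ell(g^n([z_p]))>\ep_0.$$
Set $I:=F([z_p])\subset\T^2$ (or $I:=P([z_p])$ if $[z_p]\cap\T^2\neq\emptyset$); by construction $I$ is a nontrivial fiber of $\h^{-1}$. Each iterate $f^j(I)\subset\T^2$ is the first return to $\T^2$, along $\F^u_g$, of the central arc $g^{m_j}([z_p])$ for a strictly increasing sequence $m_j\to\infty$. Because leaves of $\F^{cu}_g$ are $C^1$ manifolds and $\T^2$ is uniformly transverse to $\F^u_g$, the holonomy projection from $g^{m_j}([z_p])$ to $f^j(I)$ has Jacobian bounded above and below by positive constants independent of $j$. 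Hence $\limsup_{j}\ell(f^j(I))>\ep_0'$ for some $\ep_0'>0$, so infinitely many terms in $\sum_n\ell(f^n(I))$ are bounded below by $\ep_0'$, and the sum diverges.

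The main obstacle is the bookkeeping between $g$-iterates on $\T^3$ and $f$-iterates on $\T^2$: one must ensure that every $n$ for which $\ell(g^n([z_p]))>\ep_0$ yields, within a uniformly bounded number of further $g$-iterations, a return to $\T^2$ — otherwise the stretched $g$-iterates might not correspond to genuine $f$-iterates of $I$. This is addressed by combining the minimality and quasi-isometry of $\F^u_g$ (Section \ref{integrability}) with compactness of $\T^3$: any central arc of length in the fixed interval $[\ep_0/2,2C\sqrt{k}]$ lies in an unstable tubular neighborhood of uniformly bounded size, and uniform transversality of $\T^2$ to $\F^u_g$ then produces a uniform upper bound on first-return times. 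Matching this bound against the subsequence realizing the $\limsup$ on $\T^3$ yields the desired subsequence of $f$-iterates.

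For (B), the trade-off should be made precise by reading the $C^r$ Section Theorem of \cite{HPS} against Proposition \ref{DA}. The bunching inequalities that guarantee $E^u_g\in C^r$ (and hence $\F^u_g\in C^r$ and $f\in C^r$ by Remark \ref{dif}) involve ratios built from $\la_s(g),\la_c(g),\la_u(g)$ and from the offset $\beta_k(0)$ of the modification. Demanding larger $r$ for $f$ requires sharper bunching, which in turn constrains $\la_s,\la_c,\la_u$ in a way that restricts the regularity attainable for $\F^u_g$ — precisely the balance described in the remark. A fully rigorous version would extract an explicit inequality relating the two regularity exponents and check that it saturates at the $C^{3-\ep}$ threshold of the Main Theorem.
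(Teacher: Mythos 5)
Your argument for the divergence claim is correct and uses the same mechanism the paper itself relies on: Lemma \ref{pclass} gives $\limsup_{n}\ell(g^n([z_p]))>\ep_0$, the projection to $\T^2$ along $\F^u_g$ has uniformly bounded distortion, and the $g$-iterates land on an infinite subsequence of the $f$-iterates of $I$, so $\limsup_j\ell(f^j(I))$ is bounded below by a positive constant and the series diverges. This is exactly the stretching estimate that the paper extracts from the transversal homoclinic point and uses elsewhere (in establishing non-distality, sensitivity, and Li--Yorke chaos), so in spirit your route is the paper's route.

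One point of imprecision, which happily does not damage the conclusion. You describe the ``main obstacle'' as having to guarantee that each stretched $g$-iterate returns to $\T^2$ after a uniformly bounded number of further $g$-iterations. That framing conflates the holonomy projection $F$ with the dynamics $g$: $F$ is not a $g$-iterate, it is the first-return map along $\F^u_g$, which always exists (the leaf $\F^{u,+}_g(p)$ is quasi-isometric, transverse to $\T^2$, and crosses $\T^2$ with bounded gaps in arclength). Consequently $F([g^n(z_p)])$ is well-defined and has length comparable to $\ell([g^n(z_p)])$ with constants depending only on the geometry, with no iteration needed. The actual bookkeeping to check is simpler: the correspondence $n\mapsto j(n)$, defined by $F([g^n(z_p)])=f^{j(n)}(I)$, is monotone non-decreasing with $j(n)\to\infty$ (since $g$ expands unstable arclength by at least $\la_u(g)>1$), so the stretched $g$-iterates hit infinitely many \emph{distinct} $f$-iterates of $I$. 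Also, your phrase ``for a strictly increasing sequence $m_j$'' suggests a bijection between $f$-iterates and $g$-iterates; in fact $j(n)$ may skip values, but only a subsequence of $f$-iterates with uniformly positive length is needed. With those two adjustments the argument is clean, and your heuristic discussion of the $C^r$ Section Theorem trade-off is consistent with the paper's intent.
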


\section{On the smoothness of $E^u_g$.}\label{secdif}

From Theorem \ref{maintopologico} and Remark \ref{dif} the only
thing that is left to prove for the proof of our Main Theorem is the
following: given $r\in [1,3)$ there exists $g$ so that the unstable
bundle  $E^u_g$ is of class $C^r.$

In order to establish the differentiability class of $E^u_g$ we
recall a classical result from \cite{HPS} that is very useful for
these type of problems.

\begin{teo}{$C^{r}$-section theorem.}\label{crsection}

Consider $M$ a compact $C^{r}$-manifold and $g:M\rightarrow M$ a
$C^{r}$-diffeomorphism. Let $\pi:L\rightarrow M$ be a
finite-dimensional Finslered vector bundle and let $D$ be disc
subbundle, $\pi(D)=M.$  Let $F: D \rightarrow D$ be a homeomorphisms
such that $F(L_{\xi})=L_{g(\xi)}$, and let $l_{\xi}=l_{\xi}(F,g)$ be
the Lipchitz constant of $F\mid_{L_{\xi}}$ for $\xi \in M$.

Then if $l_{\xi}<1$ for every $\xi\in M$ there exists a unique
continue section $\sigma: M \rightarrow L$ such that $F \circ \sigma
= \sigma \circ g$ (an invariant section).

Moreover, if $\pi:L\rightarrow M$ is a $C^{r}$-vector bundle (with
some structure which is compatible with the Finslered structure),
$F$ is $C^{r}$ and setting $\tau_{\xi}=\tau_{\xi}(g)=\|
(dg_{\xi})^{-1}\|$ we have $l_{\xi}.\tau_{\xi}^{r}<1$, then the
invariant section $\sigma:M\rightarrow L$ is $C^{r}$.

\end{teo}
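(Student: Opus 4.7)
The plan is to obtain the continuous invariant section by a Banach fixed point argument, and then upgrade the regularity to $C^r$ by an inductive application of the Fiber Contraction Theorem on jet bundles over $M$.

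For the first part, consider the space $\Gamma^0(D)$ of continuous sections of the disc subbundle $D \subset L$, equipped with the uniform metric $d(\sigma,\tau)=\sup_{\xi\in M}d_{L_\xi}(\sigma(\xi),\tau(\xi))$ induced by the Finsler structure. Since $D$ has closed fibers and $M$ is compact, $\Gamma^0(D)$ is a complete metric space. Define the graph transform $\Phi\colon \Gamma^0(D)\to \Gamma^0(D)$ by $\Phi(\sigma)=F\circ\sigma\circ g^{-1}$; this is well defined because $F(L_\xi)=L_{g(\xi)}$, and a section $\sigma$ satisfies $F\circ\sigma=\sigma\circ g$ if and only if $\Phi(\sigma)=\sigma$. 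A direct estimate gives
$$d(\Phi(\sigma),\Phi(\tau))\le \sup_{\xi\in M} l_{g^{-1}(\xi)}\cdot d(\sigma,\tau)\le \Lambda\cdot d(\sigma,\tau),\quad \Lambda:=\sup_{\xi\in M}l_\xi<1,$$
the last inequality by compactness of $M$. The Banach fixed point theorem then yields the unique continuous invariant section.

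For the $C^r$ part, proceed by induction on $r\ge 1$ via the Fiber Contraction Theorem. Formally differentiating $F\circ\sigma=\sigma\circ g$ once, the derivative $D\sigma$ becomes an invariant section of a new bundle over $M$ (linear maps $TM\to L$), with respect to a lifted graph transform $\Phi_1$ whose fiberwise Lipschitz constant is bounded, by the chain rule, by $l_\xi\cdot\|(dg_\xi)^{-1}\|=l_\xi\tau_\xi$. Iterating, at the $k$-th order the lifted transform $\Phi_k$ on the $k$-jet bundle has fiber Lipschitz constant bounded by $l_\xi\tau_\xi^k$, which is strictly less than $l_\xi\tau_\xi^r<1$ for every $k\le r$. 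The base dynamics on $\Gamma^0(D)$ is already a contraction by the first part, so the Fiber Contraction Theorem of \cite{HPS} implies that any forward $\Phi_k$-orbit starting at a smooth initial section converges uniformly to the invariant jet section. Identifying this limit with $D^k\sigma$ for each $k\le r$ gives the claim.

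The main obstacle is the bookkeeping of the jet bundle lifts: at order $k\ge 2$, the chain rule produces terms involving compositions of the lower derivatives $D^j\sigma$ with $j<k$, and one must verify that after absorbing these lower-order terms (already known to be continuous by induction) the lifted transform $\Phi_k$ is indeed fiberwise Lipschitz with constant $l_\xi\tau_\xi^k$ and preserves the natural bundle of $k$-jets. A secondary technical issue is to certify that the abstract invariant section produced by the Fiber Contraction Theorem genuinely equals $D^k\sigma$ and not merely some section of the jet bundle; this is handled by showing that $\Phi_k$ preserves the subset of jets that are actual derivatives of $C^k$ sections, so that the limit, being a uniform limit of such derivatives, coincides with $D^k\sigma$ for the invariant section $\sigma$ obtained in the first step.
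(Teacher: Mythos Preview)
The paper does not give a proof of this theorem at all: it is stated as a classical result and attributed to \cite{HPS} (Hirsch--Pugh--Shub), and the paper immediately proceeds to apply it. So there is no ``paper's own proof'' to compare your proposal against.

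That said, your sketch is essentially the standard argument one finds in \cite{HPS}: a Banach fixed point in $\Gamma^0(D)$ for the existence of the continuous invariant section, followed by an inductive application of the Fiber Contraction Theorem on the jet bundles to upgrade regularity, with the key estimate that the fiberwise Lipschitz constant at level $k$ is $l_\xi\tau_\xi^k$. The two technical points you flag (absorbing the lower-order chain-rule terms into the base dynamics, and identifying the abstract invariant jet section with the genuine $k$-th derivative of $\sigma$) are exactly the ones that require care in the full HPS proof, and your description of how they are handled is correct. One small remark: for non-integer $r$ the top step of the induction works with H\"older jets rather than full derivatives, but the contraction estimate $l_\xi\tau_\xi^r<1$ is precisely what is needed there as well.
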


Let $B\in Sl(3,\Z)$ be a linear transformation with eigenvalues
$0<\la_s<\la_c<1<\la_u$ and invariant hyperbolic structure
$E^s_B\oplus E^c_B\oplus E^u_B$ as we have been considering and the
euclidean metric on $\R^3$ such that the above spaces are mutually
orthogonal. Consider the vector space
$$\mathcal{L}(E^u_B,E^s_B\oplus E^c_B)=\{t:E^u_B\to E^s_B\oplus E^c_B, t
\mbox{ linear}\}$$ endowed with the natural norm structure.

Consider the (trivial) vector bundle
\begin{equation}\label{fibrado}
L=\{(\xi,t):\xi\in\T^{3},t\in \mathcal{L}(E^u_B,E^s_B\oplus
E^c_B)\}.
\end{equation}
Then $\pi:L\rightarrow M$ given by $\pi(\xi,t)=\xi$ is a (finite
dimensional) $C^\infty$ Finslered vector bundle.

Now, for $g=g_{B,k}:\T^3\to\T^3$ we define the associated vector
bundle map  $F=F_{B,g}:L\rightarrow L$ as follows: for $(\xi,t)\in
L,$
\begin{equation}\label{grafico}
F(\xi,t)=(g(\xi),s)),s\in \mathcal{L}(E^u_B,E^s_B\oplus E^c_B) \mbox{
such that } graph(s))=dg_\xi(graph(t)).
\end{equation}
   Recall that $E^s_B\oplus
E^c_B$ is invariant by $dg_\xi$ for any $\xi\in\T^3$ and so $F$ is
well defined vector bundle homeomorphism. Neverhteless, for $g$
close to $g_{B,k}$ the associated map $F:L\to L$ may not be well
defined on the whole $L.$ To overcome this difficulty just set
$$D=\{(\xi,t):\xi\in\T^{3},t\in \mathcal{L}(E^u_B,E^s_B\oplus E^c_B), \|t\|\le 1\}$$
and from the above theorem we have
\begin{cor}\label{sectiondisc}
Assume that for some $r, B$ and $k$  we have
$$l_\xi(F,g_{B,k})<1,\,\,\,\,\,\,\,\,\,\,\,
l_\xi(F,g_{B,k})\left(\tau(g_{B,k})\right)^r<1.$$
 Then, there exists
$\mathcal U(g_{B,k})$ such that for any $g\in \mathcal{U}(g_{B,k})$
of class $C^\infty$ we have that the associated map $F_g:D\to D$ is
well defined, $l_\xi(F)<1$ and $l_\xi(F)\tau(g)^r<1.$ In particular
there exists a unique invariant section for $F_g$ in $D$ and it is of class
$C^r.$

\end{cor}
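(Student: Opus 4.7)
The plan is to deduce the corollary as a direct application of the $C^r$-section theorem (Theorem \ref{crsection}) to the disc bundle $D\subset L$ over $M=\T^3$, with base map $g$ and fiberwise graph transform $F_g$. Three items need to be checked for every $g$ in a sufficiently small neighborhood $\mathcal{U}(g_{B,k})$, and then the conclusion follows immediately: (a) $F_g(D)\subset D$, and $F_g$ is $C^\infty$ when $g$ is $C^\infty$; (b) $l_\xi(F_g,g)<1$ uniformly in $\xi$; and (c) $l_\xi(F_g,g)\,\tau_\xi(g)^r<1$ uniformly in $\xi$.

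For (a), I would invoke Proposition \ref{DA}: the cone $C^u$ around $E^u_B$ is $dg$-invariant for every $g\in\mathcal{U}(g_{B,k})$. Endowing $L$ with the Finsler norm induced by the Euclidean metric on $\R^3$ (already chosen so that $E^u_B$ and $E^s_B\oplus E^c_B$ are orthogonal), the unit disc $\{\|t\|\le 1\}\subset L_\xi$ corresponds to the set of lines in $T_\xi\T^3$ lying inside a cone of fixed opening around $E^u_B$. Choosing $C^u$ to be exactly this cone (shrinking $k$ if necessary) converts the cone invariance into $F_g(D)\subset D$. Smoothness of $F_g$ on $D$ follows from its closed-form expression as a rational function of the blocks of $dg_\xi$ in the splitting $E^u_B\oplus(E^s_B\oplus E^c_B)$, whose denominator stays bounded away from zero thanks to the expansion of $dg_\xi$ on $E^u_B$.

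For (b) and (c) the key point is continuity plus compactness. Both $l_\xi(F_g,g)$ and $\tau_\xi(g)=\|dg_\xi^{-1}\|$ depend continuously on $g$ in the $C^1$ topology, uniformly in $\xi\in\T^3$. The hypothesis of the corollary supplies $l_\xi(F,g_{B,k})<1$ and $l_\xi(F,g_{B,k})\,\tau_\xi(g_{B,k})^r<1$ at every $\xi$; by compactness of $\T^3$ these strict inequalities hold with a uniform positive gap. Shrinking $\mathcal{U}(g_{B,k})$ in the $C^r$ topology therefore preserves both inequalities for every $g\in\mathcal{U}(g_{B,k})$, and Theorem \ref{crsection} delivers the unique $C^r$ invariant section of $F_g$ in $D$.

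The only delicate point is keeping the norm choice in (a) consistent with the one used to compute $l_\xi$ and $\tau_\xi$ in (b) and (c): if the identification of $D$ with an invariant cone were done with a different Finsler structure, the uniform margins inherited from the hypothesis could be eroded by the change of norm. Because the Euclidean metric making $E^u_B\oplus E^s_B\oplus E^c_B$ orthonormal has been fixed from the start of Section \ref{secmane}, the operator norm on $\mathcal{L}(E^u_B,E^s_B\oplus E^c_B)$ is the natural and compatible choice; once it is fixed, (a) reduces to the cone invariance of Proposition \ref{DA}, and the application of Theorem \ref{crsection} is mechanical.
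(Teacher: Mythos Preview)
Your proposal is correct and follows exactly the approach the paper intends: the paper states the corollary with no proof beyond the phrase ``and from the above theorem we have,'' so verifying the hypotheses of Theorem~\ref{crsection} for nearby $g$ via compactness and $C^1$-continuity, together with cone invariance for the well-definedness of $F_g$ on $D$, is precisely what is needed. Two cosmetic remarks: Proposition~\ref{DA} as stated asserts only that the invariant bundles lie inside prescribed cones, not that the cones themselves are $dg$-invariant, though the latter is implicit in its proof and standard; and your parenthetical ``(shrinking $k$ if necessary)'' is slightly off since $k$ is fixed in the hypothesis---what actually ensures $F_g(D)\subset D$ is that the affine drift term $\lambda_u^{-1}A_\xi$ is small (which holds under the standing smallness assumptions on $k$ from Section~\ref{secmane}), so that $l_\xi+\|A_\xi\|/\lambda_u<1$.
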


\begin{obs}\label{crbundle}
Observe that if $\sigma:\T^3\to L$ is an invariant section by $F$,
i.e., $F\circ \sigma=\sigma\circ g$ then it holds that
$graph(\sigma(\xi))=E^u_g(\xi).$ So, in order to find the
differentiability class we will apply the $C^r$ Section Theorem to
our $F:L\to L$ over $g.$
\end{obs}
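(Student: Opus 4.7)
The plan is to identify any continuous $F$-invariant section with the graph parametrization of the unstable bundle $E^u_g$, by exhibiting $E^u_g$ itself as an invariant section and then invoking uniqueness coming from the contraction hypothesis $l_\xi(F,g)<1$.

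First I would show that $E^u_g$ defines a continuous $F$-invariant section $\sigma^u$ of $L$ valued in $D$. By Proposition \ref{DA}, choosing $k$ and $\mathcal U$ sufficiently small, the bundle $E^u_g$ lies in a cone $C^u$ around $E^u_B$ that is as narrow as we like; in particular $E^u_g(\xi)$ is transverse to $E^s_B\oplus E^c_B$, and therefore is the graph of a unique linear map $t^u_\xi:E^u_B\to E^s_B\oplus E^c_B$. By narrowing $C^u$ if necessary (shrinking $k$ and $\mathcal U$), we may ensure $\|t^u_\xi\|\le 1$ uniformly in $\xi$, so $\sigma^u(\xi):=t^u_\xi$ defines a continuous section $\T^3\to D$, continuity following from continuity of $\xi\mapsto E^u_g(\xi)$.

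Next I would verify $F$-invariance of $\sigma^u$ directly from the definition \eqref{grafico}: one has $F(\xi,\sigma^u(\xi))=(g(\xi),s)$ with $\mathrm{graph}(s)=dg_\xi(\mathrm{graph}(\sigma^u(\xi)))=dg_\xi(E^u_g(\xi))=E^u_g(g(\xi))=\mathrm{graph}(\sigma^u(g(\xi)))$, where the third equality is the $dg$-invariance of the unstable bundle. Hence $s=\sigma^u(g(\xi))$ and $F\circ\sigma^u=\sigma^u\circ g$. Finally, under the contraction hypothesis $l_\xi(F,g)<l<1$, each fiber map $F|_{L_\xi}:D_\xi\to D_{g(\xi)}$ is a uniform Lipschitz contraction with constant $l$. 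For any two continuous $F$-invariant sections $\sigma,\sigma':\T^3\to D$, iterating invariance yields $\|\sigma(g^n\xi)-\sigma'(g^n\xi)\|\le l^n\|\sigma(\xi)-\sigma'(\xi)\|$, and compactness of $\T^3$ forces $\sup_\xi\|\sigma-\sigma'\|=0$. Applied to the given $\sigma$ (which in the context of the $C^r$-Section Theorem is a section into $D$) and to $\sigma^u$, this yields $\sigma=\sigma^u$, whence $\mathrm{graph}(\sigma(\xi))=E^u_g(\xi)$ as asserted.

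The main obstacle is the quantitative cone condition $\|t^u_\xi\|\le 1$ needed to place $\sigma^u$ in the disc subbundle $D$: one must track Proposition \ref{DA} carefully to choose the cones around $E^s_B,E^c_B,E^u_B$ narrow enough to guarantee this bound. Since such restrictions on $k$ and $\mathcal U$ have been used repeatedly throughout the paper, this presents no genuine difficulty, but it is the only nontrivial step — everything else reduces to the tautological translation between graphs of linear maps and one-dimensional subspaces transverse to $E^s_B\oplus E^c_B$.
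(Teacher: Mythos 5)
Your argument is correct and is precisely the standard graph-transform reasoning that the paper implicitly invokes (the remark is stated without proof): $E^u_g$, lying in a narrow cone around $E^u_B$, is the graph of a bounded section into $D$ which is $F$-invariant by $dg$-invariance of the unstable bundle, and the fiberwise contraction $l_\xi<1$ together with boundedness of sections into $D$ forces any continuous $F$-invariant section to coincide with it.
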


\begin{obs}

If we use the $C^{r}$-section theorem to calculate the
differentiability of the unstable vector bundle of the Anosov system
induced by $B$, then we will have differentiability less than
$C^{3}$: Let $r=3$, then  compute
$l_{\xi}\tau_{\xi}^{r}=\frac{\lambda_c}{\lambda_u}\frac{1}{\lambda_s^{3}}=\frac{\lambda_c}{\lambda_s^{2}}>1$.
Moreover, the last estimate shows that in order to have proximity to
$C^{3}$ differentiability we must find linear Anosov systems with
$\lambda_s$ close to $\lambda_c$. This will be done in Section
\ref{fam}

\end{obs}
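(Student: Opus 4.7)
The plan is to apply the setup preceding the $C^r$-section theorem directly to the unperturbed linear map $g=B$ on the graph bundle $L$ defined in \eqref{fibrado}, and to verify by direct computation that the condition $l_\xi \tau_\xi^r<1$ fails at $r=3$. Because $B$ is linear and preserves the splitting $E^s_B\oplus E^c_B\oplus E^u_B$, the associated fiber map $F=F_{B,B}$ in \eqref{grafico} is itself linear on each fiber, so its Lipschitz constant coincides with an operator norm that can be computed explicitly in terms of $\la_s,\la_c,\la_u$.

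To compute $l_\xi(F,B)$, I would decompose any $t\in\mathcal{L}(E^u_B,E^s_B\oplus E^c_B)$ as a pair $(t_s,t_c)$ with $t_\sigma:E^u_B\to E^\sigma_B$ for $\sigma\in\{s,c\}$. A short computation from \eqref{grafico}, using the fact that $dB_\xi$ is diagonal $\mathrm{diag}(\la_s,\la_c,\la_u)$ in the chosen orthonormal splitting, gives
\[
F(\xi,(t_s,t_c))=\bigl(B\xi,\,((\la_s/\la_u)t_s,(\la_c/\la_u)t_c)\bigr).
\]
With the induced fiber norm, the operator norm of this linear map equals $\max(\la_s/\la_u,\la_c/\la_u)=\la_c/\la_u$, since $\la_s<\la_c$. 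Hence $l_\xi=\la_c/\la_u$, independent of $\xi$. Likewise $\tau_\xi(B)=\|B^{-1}\|=1/\la_s$, because the smallest eigenvalue of $B$ in absolute value is $\la_s$. Combining, for $r=3$,
\[
l_\xi\,\tau_\xi^{r}=\frac{\la_c}{\la_u\la_s^{3}},
\]
and since $B\in SL(3,\Z)$ has three real eigenvalues with product $\det B=\la_s\la_c\la_u=1$, this simplifies to $\la_c^{2}/\la_s^{2}>1$; the slightly shorter expression $\la_c/\la_s^{2}$ in the statement is also manifestly greater than $1$ under $\la_s<\la_c<1$. The only genuine obstacle here is organizational: one must track carefully how the fiber action arises from \eqref{grafico} and confirm that the orthogonal Euclidean metric adopted earlier eliminates any dependence on $\xi$.

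For the second assertion, I would optimize the exponent: $l_\xi\tau_\xi^{r}<1$ means $\la_c/(\la_u\la_s^{r})<1$, and substituting $\la_u=(\la_s\la_c)^{-1}$ this reads $\la_s^{\,r-1}>\la_c^{2}$. Since $\log\la_s<0$, this is equivalent to
\[
r-1<\frac{2\log\la_c}{\log\la_s}.
\]
As $\la_s\nearrow\la_c$ the ratio on the right tends to $2$, so the admissible exponent $r$ approaches $3$. This is precisely the assertion that approaching $C^{3}$ regularity via the $C^r$-section theorem forces a choice of linear Anosov $B$ with $\la_s$ close to $\la_c$, motivating the family that will be constructed in Section \ref{fam}.
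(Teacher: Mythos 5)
Your computation is correct and follows the same route as the paper: it is the specialization of Lemma \ref{lip} and Lemma \ref{tau} to the unperturbed linear map $B$, where the graph transform acts on the fiber as left composition by $T_\xi/\lambda_u=\mathrm{diag}(\lambda_s,\lambda_c)/\lambda_u$, giving $l_\xi=\lambda_c/\lambda_u$, and $\tau_\xi=\|B^{-1}\|=1/\lambda_s$. You are also right to flag that, after the $\det B=1$ reduction, $\frac{\lambda_c}{\lambda_u\lambda_s^3}$ equals $\frac{\lambda_c^2}{\lambda_s^2}$ rather than the $\frac{\lambda_c}{\lambda_s^2}$ written in the remark (this agrees with the formula $\frac{\lambda_c(a)^2}{\lambda_s(a)^{r-1}}$ appearing later in \eqref{liptau}); either expression is $>1$, so the conclusion stands, and your reformulation $r-1<2\log\lambda_c/\log\lambda_s$ makes the ``$\lambda_s$ close to $\lambda_c$'' requirement quantitative in a way the remark only gestures at.
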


Through the rest of this subsection and to avoid notation we set
$g=g_{B,k}.$ We want to estimate  $l_{\xi}(F,g)$ and $\tau_{\xi}(g)$
for the graph transform $F$ associated to $g=g_{B,k}$. Recall that
the differential of $g$ in the decomposition $E^s_B\oplus
E^c_B\oplus E^u_B$ is given
 by:

\begin{eqnarray*}
&&dg_{\xi}= \left( \begin{array}{ccc}
\lambda_s & 0 & 0 \\
0 & \lambda_c & 0 \\
0 & 0 & \lambda_u \end{array} \right)\,\,\, \mbox{ for
}\xi\in\T^{3}\setminus B(p,\rho)\\&&\mbox{and }\\&& dg_{\xi}= \left(
\begin{array}{ccc}
\lambda_s + Z(z)(\beta(r)+\beta'(r)2x^2) & Z(z)\beta'(r)2xy & Z'(z)\beta(r)x \\
Z(z)\beta'(r)2xy& \lambda_c + Z(z)(\beta(r)+\beta'(r)2y^2) & Z'(z)\beta(r)y \\
0 & 0 & \lambda_u \end{array} \right)\\&& \mbox{ for }\xi \in
B(p,\rho).
\end{eqnarray*}

Set  $T_{\xi}=dg_{\xi/E^s_B\oplus E^c_B}.$
\begin{lema}\label{lip}
With the above notations we have
$$ l_\xi=l_{\xi}(F)\le\frac{\|T_{\xi} \|}{\lambda_u}.$$
Moreover the following estimations hold:
\begin{enumerate}\renewcommand{\theenumi}{\roman{enumi}}
\item\label{lfuera} For $\xi\notin B(p,\rho)$ we have $l_{\xi}\le\frac{\lambda_c}{\lambda_u}$
\item\label{ldentro} For $\xi\in B(p,\rho)$ we have $l_{\xi}<\frac{\lambda_c+Z(z)\beta(r)+k}{\lambda_u}$
\end{enumerate}
In particular $l_\xi(F)<1$ for all $\xi\in\T^3$ (if $k$ is small).
\end{lema}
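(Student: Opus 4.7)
The plan is to use the explicit block form of $dg_\xi$ with respect to the splitting $(E^s_B\oplus E^c_B)\oplus E^u_B$ and derive an affine formula for $F$ on each fiber. Observe from the given matrix expression that the lower $E^u_B$-row has the form $(0,0,\lambda_u)$, so $E^s_B\oplus E^c_B$ is $dg_\xi$-invariant and we can write
$$dg_\xi \;=\; \begin{pmatrix} T_\xi & N_\xi \\ 0 & \lambda_u \end{pmatrix},$$
where $T_\xi = dg_{\xi/E^s_B\oplus E^c_B}$ and $N_\xi : E^u_B\to E^s_B\oplus E^c_B$ is the upper-right column (with entries $Z'(z)\beta(r)x$, $Z'(z)\beta(r)y$, vanishing outside $B(p,\rho)$).

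Next I would compute the graph transform on fibers. A point $t\in \mathcal L(E^u_B,E^s_B\oplus E^c_B)$ has graph $\{t(v)+v: v\in E^u_B\}$; applying $dg_\xi$ and reparameterizing by $w=\lambda_u v$ shows that the new graph is that of
$$s \;=\; \lambda_u^{-1}\bigl(T_\xi\circ t + N_\xi\bigr).$$
In particular, $F\mid_{L_\xi}$ is affine in $t$, with linear part $t\mapsto \lambda_u^{-1}T_\xi\circ t$, hence its Lipschitz constant satisfies
$$l_\xi \;\le\; \frac{\|T_\xi\|}{\lambda_u},$$
which is the main claimed inequality.

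Now I would estimate $\|T_\xi\|$ in the two regions. Outside $B(p,\rho)$, $T_\xi=\mathrm{diag}(\lambda_s,\lambda_c)$ gives $\|T_\xi\|=\lambda_c$, yielding (\ref{lfuera}). Inside $B(p,\rho)$, I decompose
$$T_\xi \;=\; \bigl(\lambda_s\,I_s + \lambda_c\,I_c\bigr) + Z(z)\beta(r)\,I + 2\,Z(z)\beta'(r)\,vv^{T},\qquad v=(x,y)^{T},$$
where the first block contributes at most $\lambda_c$ to the norm, the second contributes $Z(z)\beta(r)$, and the last term is a rank-one symmetric matrix whose operator norm equals $2\,|Z(z)\beta'(r)|\,r\le 2k$ by property (1) of Lemma \ref{mod}. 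Summing and absorbing the harmless factor $2$ into $k$ (i.e.\ choosing the $k$ of Lemma \ref{mod} half as small as needed) yields $\|T_\xi\|\le\lambda_c+Z(z)\beta(r)+k$, which is (\ref{ldentro}).

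Finally, for the ``in particular'' statement I would combine the two bounds with item (6) of Remark \ref{epsilon}: since $\lambda_u>\lambda_c+\beta(0)+\varepsilon$ and $Z(z)\beta(r)\le \beta(0)$, both bounds are strictly smaller than $\lambda_u$, so $l_\xi<1$ for every $\xi\in\T^3$, provided $k$ is taken small. The only technical point is the book-keeping with the rank-one correction and the constants; no single step looks like a real obstacle, the argument is essentially algebraic manipulation of the $2\times 2$ block together with Lemma \ref{mod}(1) to control $|\beta'(r)r|$.
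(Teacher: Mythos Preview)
Your proposal is correct and follows essentially the same approach as the paper: both compute the graph transform explicitly from the block form of $dg_\xi$, obtain $l_\xi\le\|T_\xi\|/\lambda_u$, and then estimate $\|T_\xi\|$ separately outside and inside $B(p,\rho)$. The only cosmetic difference is in the estimate inside $B(p,\rho)$: the paper writes $T_\xi=D+S_\xi$ and diagonalizes the symmetric part $S_\xi$ (finding eigenvalues $\lambda_1=Z(z)(\beta(r)+2\beta'(r)r)$ and $\lambda_2=Z(z)\beta(r)$, which it reuses in the next lemma), whereas you split one step further into $D+Z(z)\beta(r)I+2Z(z)\beta'(r)vv^T$ and bound each piece by the triangle inequality---this is the same computation in slightly different packaging, and your factor-of-$2$ remark is harmless since $k$ is a free small parameter.
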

\begin{proof}
If we write
\begin{eqnarray*}
dg_\xi=\left(\begin{array}{cc} T_\xi & A_\xi\\ 0 & \la_u
\end{array}\right)
\end{eqnarray*}
then it is not difficult to see that
$$F(\xi,t)(v)=\frac{1}{\la_u} \left(T_\xi(t(v))+A_\xi
v\right)$$ and therefore
$$\|F(\xi,t_1)-F(\xi,t_2)\|\le\frac{\|T_\xi\|}{\la_u}\|t_1-t_2\|$$
which implies $ l_{\xi}\le\frac{\|T_{\xi} \|}{\lambda_u}.$ Since for
$\xi\notin B(p,\rho)$ it holds that $\|T_\xi\|=\la_c$ we obtain
\eqref{lfuera}.

In order to prove \eqref{ldentro}, set $T_{\xi}=D+S_{\xi}$ where $D=
\left( \begin{array}{ccc}
\lambda_s & 0  \\
0 & \lambda_c \\
\end{array} \right)$ and
$$S_{\xi}= \left(
\begin{array}{cc}
 Z(z)(\beta(r)+\beta'(r)2x^2) & Z(z)\beta'(r)2xy \\
Z(z)\beta'(r)2xy&Z(z)(\beta(r)+\beta'(r)2y^2)  \end{array} \right)$$
Observe that $S_\xi$ is selfadjoint and has eigenvectors (when
$\xi\neq p$) $(x,y)$,$(-y,x)$ and eigenvalues
\begin{equation}\label{eigen}
\lambda_1=Z(z)(\beta(r)+2\beta'(r)r)\;\;\;\;\;\;\;\;\lambda_2=Z(z)\beta(r).
\end{equation}
When, $\xi=p$ then $S_\xi=Z(0)\beta(0)Id.$ From the definition of
$g$ (recall Lemma \ref{mod} and \eqref{g}) we have
$-k<\lambda_1<\lambda_2<\beta(0)$ and
$\lambda_2>0,\lambda_2-\lambda_1<k$. Then, $\|S_\xi\|\le
\max\{|\la_1|,|\la_2|\}\le \la_2+k=Z(z)\beta(r) +k$ and so $\|
T_{\xi}\|\le \lambda_c+\lambda_2+k.$

\end{proof}

\begin{lema}\label{tau}
Let $\la_1=\la_{1,g}:\T^3\to \R$ be the function defined by:
$\la_{1,g}(\xi)=0$ if $\xi\notin B(p,\rho)$ and
$\la_{1,g}(\xi)=Z(z)(\beta(r)+2\beta'(r)r)$ for $\xi\in B(p,\rho).$
Then, we have
$$\|(dg_\xi)^{-1}\|=\tau_{\xi}=\tau_\xi(g)\le\frac{1}{\lambda_s+\lambda_{1,g}(\xi)}$$
\end{lema}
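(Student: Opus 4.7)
The plan is to exploit the block decomposition of $dg_\xi$ in the orthogonal splitting $T_\xi\T^3 = (E^s_B \oplus E^c_B)\oplus E^u_B$ already used in the proof of Lemma \ref{lip}, namely
\[
dg_\xi = \begin{pmatrix} T_\xi & A_\xi \\ 0 & \lambda_u\end{pmatrix},
\]
and to reduce the estimate of $\|(dg_\xi)^{-1}\|$ to a lower bound on $\lambda_{\min}(T_\xi)$. For $\xi \notin B(p,\rho)$ one has $A_\xi = 0$ and $T_\xi = \mathrm{diag}(\lambda_s,\lambda_c)$, so $(dg_\xi)^{-1}$ is diagonal with operator norm $1/\lambda_s$; since $\lambda_{1,g}(\xi) = 0$ outside the ball, the conclusion is immediate in this case.

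The heart of the argument is the case $\xi \in B(p,\rho)$. First I would recall from the proof of Lemma \ref{lip} that $T_\xi = D + S_\xi$, where $D = \mathrm{diag}(\lambda_s,\lambda_c)$ and $S_\xi$ is the symmetric matrix with eigenvalues $\lambda_1 = Z(z)(\beta(r) + 2\beta'(r) r)$ and $\lambda_2 = Z(z)\beta(r)$ identified in \eqref{eigen}; since $\beta'(r)\leq 0$, one has $\lambda_1 \leq \lambda_2$, so $\lambda_1$ is the smallest eigenvalue of $S_\xi$. Both $D$ and $S_\xi$ being symmetric, Weyl's inequality for sums of symmetric matrices gives
\[
\lambda_{\min}(T_\xi) \geq \lambda_{\min}(D) + \lambda_{\min}(S_\xi) = \lambda_s + \lambda_1 = \lambda_s + \lambda_{1,g}(\xi).
\]
As $T_\xi$ is symmetric and positive definite (for $k$ sufficiently small), this yields $\|T_\xi^{-1}\| \leq 1/(\lambda_s + \lambda_{1,g}(\xi))$.

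Finally I would transfer the estimate to the full $(dg_\xi)^{-1}$ via the block-inverse formula
\[
(dg_\xi)^{-1} = \begin{pmatrix} T_\xi^{-1} & -T_\xi^{-1} A_\xi/\lambda_u \\ 0 & 1/\lambda_u\end{pmatrix},
\]
using that $1/\lambda_u < 1/(\lambda_s + \lambda_{1,g}(\xi))$ and that $\|A_\xi\|/\lambda_u$ is of order $\sqrt{k}/(\rho\,\lambda_u)$ by the bound on $\|M_\xi\|$ in Proposition \ref{DA}. The hard part will be handling this off-diagonal contribution cleanly in the operator norm: one natural route is to verify that the extremal vector for $(dg_\xi)^{-1}$ lies in $E^s_B \oplus E^c_B$, where the action reduces to $T_\xi^{-1}$ and the previous step applies directly; alternatively, one can work with a Finsler norm on $T\T^3$ adapted to the splitting, as permitted by the $C^r$ Section Theorem, which makes the upper-triangular inverse attain its norm on the first block.
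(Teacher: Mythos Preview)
Your reduction to estimating $\|T_\xi^{-1}\|$ via the block structure of $(dg_\xi)^{-1}$ is exactly what the paper does, and the paper is equally informal about the off-diagonal term $-\lambda_u^{-1}T_\xi^{-1}A_\xi$: it simply asserts that ``since $\|A_\xi\|$ is small, $\lambda_u^{-1}<1$ and $\|T_\xi^{-1}\|\ge 1$ it follows $\tau_\xi\le\|T_\xi^{-1}\|$''. One caveat: your first suggested route --- that the extremal vector of $(dg_\xi)^{-1}$ lies in $E^s_B\oplus E^c_B$ --- is not literally correct when $A_\xi\ne 0$ (a tiny $E^u_B$-component can increase the norm), so the sharp inequality fails by an $O(\|A_\xi\|^2/\lambda_u^2)$ term. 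This is harmless for the application in Proposition~\ref{conclusion}, and neither you nor the paper loses anything by it; your Finsler-norm alternative is the cleaner way to make it rigorous.

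The genuine difference is in how you bound $\|T_\xi^{-1}\|$. You observe that $T_\xi=D+S_\xi$ is a sum of two \emph{symmetric} matrices (in the orthonormal frame $E^s_B,E^c_B$) and invoke Weyl's inequality to get $\lambda_{\min}(T_\xi)\ge\lambda_{\min}(D)+\lambda_{\min}(S_\xi)=\lambda_s+\lambda_1$ in one line. The paper instead rewrites $T_\xi=C_\xi+\widetilde S_\xi$ with $C_\xi=\mathrm{diag}(\lambda_s-\lambda_c,0)$ and $\widetilde S_\xi=S_\xi+\lambda_c I$, then argues geometrically: $\widetilde S_\xi$ sends a suitable ellipse $\mathcal E$ onto the unit circle, the perturbation $C_\xi$ moves the image into an annulus of inner radius $R=(\lambda_s+\lambda_1)/(\lambda_c+\lambda_1)$, and one reads off $\|T_\xi^{-1}\|\le R^{-1}/(\lambda_c+\lambda_1)=1/(\lambda_s+\lambda_1)$. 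Your route is shorter and more conceptual; the paper's is more hands-on and avoids naming an external inequality, but both yield exactly the same bound.
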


\begin{proof}
Write \begin{eqnarray*} dg_\xi=\left(\begin{array}{cc} T_\xi &
A_\xi\\ 0 & \la_u
\end{array}\right).
\end{eqnarray*}
Then
$$(dg_\xi)^{-1}=\left(\begin{array}{cc} T_\xi^{-1} & -\la_u^{-1}T_\xi^{-1}A_\xi\\ 0 & \la_u^{-1}
\end{array}\right).$$
Since $\|A_\xi\|$ is small, $\la_u^{-1}<1$ and $\|T_\xi^{-1}\|\ge 1$
it follows
$$\tau_\xi\le \|T_\xi^{-1}\|.$$
So we want to  estimate $\| (T_{\xi})^{-1}\|$. If $\xi\notin
B(p,\rho)$ then
$$\|T_\xi^{-1}\|=\frac{1}{\la_s}=\frac{1}{\lambda_s+\lambda_1(\xi)}.$$
If $\xi=p$ then $$T_p=\left(\begin{array}{cc} \la_s+Z(0)\beta(0) &
0\\0&\la_c+Z(0)\beta(0)
\end{array}\right)$$ and so $$\|T_p^{-1}\|=\frac{1}{\la_s+Z(0)\beta(0)}=\frac{1}{\lambda_s+\lambda_1(p)}.$$

For $\xi\in B(p,\rho), \xi\neq p$ write
$$T_{\xi}=C_{\xi}+\widetilde{S}_{\xi}$$ where
$$
C_\xi=\left(
\begin{array}{cc}
  \lambda_s-\lambda_c & 0 \\
  0 & 0 \\
  \end{array}
\right)\mbox{ and } \widetilde{S}_{\xi}=\left(
\begin{array}{cc}
 Z(z)(\beta(r)+\beta'(r)2x^2)+\la_c & Z(z)\beta'(r)2xy \\
Z(z)\beta'(r)2xy&Z(z)(\beta(r)+\beta'(r)2y^2)+\la_c  \end{array}
\right).$$ The selfadjoint $\widetilde{S}_\xi$ map has eigenvectors
$(x,y),(-y,x)$ associated to eigenvalues $\lambda_1+\lambda_c$ and
$\lambda_2+\lambda_c$ where $\la_1,\la_2$ are as in \eqref{eigen}.

Let $\mathcal{E}$ the elipse with axis in the $(x,y)$ direction and
$(-y,x)$ direction, with v\'{e}rtices of norm
$\frac{1}{\lambda_2+\lambda_c}$ and $\frac{1}{\lambda_1+\lambda_c}$
respectively. We have $S_{\xi}(\mathcal{E})=S^{1}$ (the unit
circle). Thus $$T_{\xi}(\mathcal{E})\subset\left\{v:
1-\frac{\la_c-\la_s}{\la_c+\la_1}\le \|v\|\le
1+\frac{\la_c-\la_s}{\la_c+\la_1}\right\}$$. Setting
$R=1-\frac{\lambda_c-\lambda_s}{\lambda_c+\lambda_1}=\frac{\lambda_s+\lambda_1}{\lambda_c+\lambda_1}$,
we have that $$T_{\xi}^{-1}(\{v:\|v\|=R\})\subset
int(\mathcal{E})\subset
\left\{v:\|v\|\le\frac{1}{\lambda_1+\lambda_c}\right\}$$. Then
$$\|(T_{\xi})^{-1} \|\leq
\frac{1}{R}\frac{1}{\lambda_1+\lambda_c}=\frac{1}{\lambda_s+\lambda_1}$$.
\end{proof}

\subsection{A special family of linear Anosov diffeomorphism on $\T^3$.}\label{fam}

In order to construct elements  with $E^u$ bundle of class $C^{r}$
with $r$ close to $3$  we have seen that we need $B\in Sl(3,\Z)$
with eigenvalues $\lambda_s$ and $\lambda_c$ arbitrary close. For
this we will find a special family of matrices in $SL(3,\Z).$

Let us begin with the following family $\mathcal{J}=\{M_a\}_{a\in \N
\setminus \{0,1,2\}}$  of matrices in $SL(3,\Z)$ (inspired form the
one in \cite{McS}):
\begin{equation}\label{Ma}
 M_a= \left( \begin{array}{ccc}
  0 & -1 & 0 \\
  1 & a^{2}-1 & a \\
  0 & a^{3}+a & 1 \\
\end{array} \right)
\end{equation}

\begin{lema}\label{root} For every $a\in \N \setminus
\{0,1,2\}, M_a$ has eigenvalues $\alpha_a,\beta_a,\gamma_a$ such
that
$$\alpha_a <\frac{-a^{2}}{3}<-1<\beta_a<0<a^{2}<\gamma_a.$$ Furhtermore, we have
\begin{equation}\label{condroot}
-\frac{2a^2}{3}<\alpha_a<-\frac{a^{2}}{3}\,\,\,\,\,\mbox{ and
}\,\,\,\,\,\,\,\,a^2<\gamma_a<2 a^{2}.
\end{equation}

\end{lema}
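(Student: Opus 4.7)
The plan is to compute the characteristic polynomial of $M_a$ explicitly and then locate its three real roots by evaluating the polynomial at a short list of test points and invoking the intermediate value theorem.

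First I would expand $\det(\lambda I - M_a)$ along the first row of
\[
\lambda I - M_a \;=\; \begin{pmatrix} \lambda & 1 & 0 \\ -1 & \lambda - (a^2-1) & -a \\ 0 & -(a^3+a) & \lambda - 1 \end{pmatrix}.
\]
A direct expansion yields the cubic
\[
p(\lambda) \;=\; \lambda^3 - a^2\lambda^2 - a^4\lambda - 1,
\]
where the key cancellation comes from the $2\times 2$ minor giving $(\lambda-(a^2-1))(\lambda-1) - (-a)\cdot(-(a^3+a)) = \lambda^2 - a^2\lambda - a^4 - 1$. Once this is in hand, everything reduces to sign analysis of an explicit cubic.

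Next I would evaluate $p$ at six strategically chosen points. The values I expect to need are
\[
\begin{aligned}
p(-2a^2/3) &= -\tfrac{2}{27}a^6 - 1 < 0, & p(-a^2/3) &= \tfrac{5}{27}a^6 - 1 > 0 \text{ (for } a\ge 3\text{)},\\
p(-1) &= a^4 - a^2 - 2 = (a^2-2)(a^2+1) > 0, & p(0) &= -1 < 0,\\
p(a^2) &= -a^6 - 1 < 0, & p(2a^2) &= 2a^6 - 1 > 0.
\end{aligned}
\]
Each of these is a routine arithmetic check given the hypothesis $a\ge 3$. Applying the intermediate value theorem then gives one real root in each of the open intervals $(-2a^2/3,-a^2/3)$, $(-1,0)$, and $(a^2,2a^2)$, producing $\alpha_a,\beta_a,\gamma_a$ with exactly the bounds in \eqref{condroot} and hence also the coarser chain $\alpha_a<-a^2/3<-1<\beta_a<0<a^2<\gamma_a$.

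Since a cubic has at most three roots and we have exhibited three real ones lying in disjoint intervals, these are all the eigenvalues of $M_a$ and they are necessarily distinct (so no complex conjugate pair appears). I do not expect any genuine obstacle: the only thing to be careful with is that the numerical inequalities on $p(-a^2/3)$ and $p(-1)$ require $a\ge 3$, which matches the hypothesis $a\in\NN\setminus\{0,1,2\}$; the case $a=2$ would already be tight in the estimate $p(-a^2/3)=\tfrac{5}{27}a^6-1$, which is why the family is indexed starting at $a=3$.
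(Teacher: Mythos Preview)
Your argument is correct and essentially the same as the paper's: both compute the characteristic polynomial $p(\lambda)=\lambda^3-a^2\lambda^2-a^4\lambda-1$ (the paper works with $P_a=-p$) and locate the three real roots by sign checks at $-2a^2/3$, $-a^2/3$, $a^2$, and $2a^2$. The only difference is that the paper reaches $-a^2/3$ and $a^2$ as the critical points of the cubic via $P_a'$, whereas you use them directly as test points; your version is in fact slightly more complete, since you also evaluate at $\lambda=-1$ to secure $\beta_a>-1$, a step the paper's own proof does not explicitly carry out.
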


\begin{proof}
The characteristic polinomial of $M_a$ is given by
$P_a(\lambda)=-\lambda^{3}+a^{2}\lambda^{2}+a^{4}\lambda+1$. The
derivative of $P_a$ is
$P_a'(\lambda)=-3\lambda^{2}+2a^{2}\lambda+a^{4}$ and has one
negative root $\la=\frac{-a^{2}}{3}$ and a positive one $\la=a^{2}$.
On the negative root of $P'_a$ the polynomial $P_a$ has relative
minimum, and on the  positive root where there is a relative maximum
of $P_a$. The value of $P_a$  on such roots are:
$$P_a\left(\frac{-a^{2}}{3}\right)=\frac{-5a^{6}}{27}+1<0\,\,\,\,\,\,\mbox{ and }\,\,\,\,\, P_a(a^{2})=a^{6}+1>0$$
Thus, $P_a(\la)$ is as in Figure \ref{pol} and the eigenvalues of
$M_a$ (i.e. the roots of $P_a(\la))$ satisfies
$$\alpha_a <\frac{-a^{2}}{3}<\beta_a<0<a^{2}<\gamma_a.$$
For the proof of the other inequalities in  \eqref{condroot} just do
some computations:
$$P_a\left(-\frac{2a^2}{3}\right)=\frac{2}{3^3}a^6+1>0\,\,\,\,\mbox{ and }\,\,\,\, P_a(2a^2)=-2a^6+1<0.$$
\end{proof}

\begin{figure}[ht]\begin{center}

\psfrag{al}{$\alpha_a$} \psfrag{be}{$\beta_a$}
\psfrag{ga}{$\gamma_a$} \psfrag{a2}{$a^2$}
\psfrag{a2/3}{$-\frac{a^2}{3}$}

\includegraphics[height=6cm]{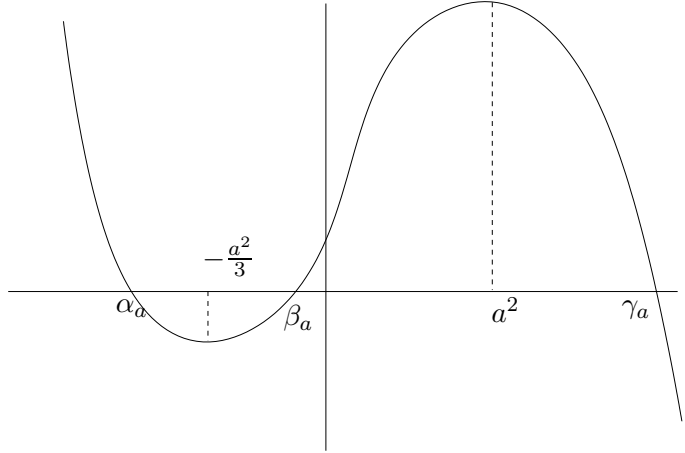}
\caption{The graph of $P_a(\la)$}\label{pol}
\end{center}\end{figure}

We are ready to define our special family of linear Anosov maps:
\begin{equation}\label{family}
\mathcal{I}=\left\{B_a=\left(M_a^2\right)^{-1}: M_a\in \mathcal{J},
a\in\N\setminus\{0,1,2\}\right\}
\end{equation}
Notice that $B_a\in SL(3,\Z)$ and the eigenvalues of $B_a$ are the
inverse of the square of the eigenvalues of $M_a$ and we have:
$$\frac{1}{4a^4}<\frac{1}{\gamma_a^2}<\frac{1}{a^4}<\frac{9}{4a^4}<\frac{1}{\alpha_a^2}
<\frac{9}{a^4}<1<\frac{1}{\beta_a^2}.$$ We summarize this in the
following
\begin{cor}
For $B_a \in\mathcal{I}$ the following holds:
\begin{enumerate}\renewcommand{\theenumi}{\roman{enumi}}
\item $B_a\in Sl(3,\Z)$ and has eigenvalues
$0<\lambda_{s}(a)<\lambda_{c}(a)<1<\lambda_{u}(a)$.

\item For every $a\in\N\setminus\{0,1,2\}$ we may write
\begin{equation}\label{ka}
\lambda_{s}(a)=K_a\frac{1}{a^{4}}\,\,\,\,\,\,\mbox{ and
}\,\,\,\,\,\,\,\,\lambda_{c}(a)=K'_a\frac{1}{a^{4}}
\end{equation}
 where
$\displaystyle{\frac{1}{10}<K_a<K'_a<10}$. In particular
$\displaystyle{\la_u(a)=\frac{a^8}{K_aK'_a}}.$

\end{enumerate}
\end{cor}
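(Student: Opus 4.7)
The plan is to read off the eigenvalues of $B_a$ directly from those of $M_a$ given by Lemma \ref{root}, and then use the determinant condition to express $\lambda_u(a)$.

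First, since $M_a$ has integer entries and determinant one (a direct check, or seen from the fact that its characteristic polynomial has constant term $1$), we have $M_a \in SL(3,\Z)$, hence $M_a^2 \in SL(3,\Z)$. Because this group is closed under inversion, $B_a = (M_a^2)^{-1} \in SL(3,\Z)$. The eigenvalues of $B_a$ are precisely $1/\alpha_a^2,\, 1/\beta_a^2,\, 1/\gamma_a^2$ where $\alpha_a,\beta_a,\gamma_a$ are the eigenvalues of $M_a$ furnished by Lemma \ref{root}.

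Next, I would use the inequalities $-2a^2/3 < \alpha_a < -a^2/3$ and $a^2 < \gamma_a < 2a^2$ to bound two of the eigenvalues of $B_a$. Squaring and inverting, $1/\gamma_a^2 \in (1/(4a^4),\, 1/a^4)$ and $1/\alpha_a^2 \in (9/(4a^4),\, 9/a^4)$. Since $-1 < \beta_a < 0$ gives $1/\beta_a^2 > 1$, the three eigenvalues are naturally ordered as
\[
0 \;<\; \frac{1}{\gamma_a^2} \;<\; \frac{1}{\alpha_a^2} \;<\; 1 \;<\; \frac{1}{\beta_a^2},
\]
which identifies $\lambda_s(a) = 1/\gamma_a^2$, $\lambda_c(a) = 1/\alpha_a^2$ and $\lambda_u(a) = 1/\beta_a^2$, proving (i). Setting $K_a := a^4/\gamma_a^2$ and $K'_a := a^4/\alpha_a^2$, the bounds just obtained give $1/4 < K_a < 1 < 9/4 < K'_a < 9$, which in particular fits inside $(1/10,10)$ as required.

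Finally, for the formula for $\lambda_u(a)$, I would use $\det(M_a) = 1 = \alpha_a\beta_a\gamma_a$, so that $\beta_a^2 = 1/(\alpha_a^2 \gamma_a^2)$, and hence
\[
\lambda_u(a) \;=\; \frac{1}{\beta_a^2} \;=\; \alpha_a^2\,\gamma_a^2 \;=\; \frac{a^4}{K'_a}\cdot\frac{a^4}{K_a} \;=\; \frac{a^8}{K_a K'_a},
\]
which is (ii). There is no real obstacle here; the statement is essentially a direct bookkeeping consequence of Lemma \ref{root} together with the determinantal relation, the only thing to be careful about is the correct identification of which of the three eigenvalues of $B_a$ plays the role of $\lambda_s,\lambda_c,\lambda_u$.
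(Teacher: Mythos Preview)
Your proposal is correct and follows exactly the approach the paper takes: the paper simply records the chain of inequalities
\[
\frac{1}{4a^4}<\frac{1}{\gamma_a^2}<\frac{1}{a^4}<\frac{9}{4a^4}<\frac{1}{\alpha_a^2}<\frac{9}{a^4}<1<\frac{1}{\beta_a^2}
\]
obtained from Lemma~\ref{root} and states the corollary as a summary, so your write-up is just a slightly more explicit version of the same argument (including the determinant relation $\alpha_a\beta_a\gamma_a=1$ to get the formula for $\lambda_u(a)$).
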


With the next result we will conclude the proof of our Main Theorem:
\begin{prop}\label{conclusion}
For each $r\in [1,3)$ there exists $B_a\in \mathcal{I}$ such that
for $g_a=g_{B_a,k}$ as defined in \eqref{id} and \eqref{g} with $k$
sufficiently small the following holds: for the map
$F=F_{B_a,g_a}:L\to L$ as defined in \eqref{fibrado} and
\eqref{grafico} and $l_\xi(F),\tau_\xi(g_a)$ as defined in Theorem
\ref{crsection} we have:
$$l_\xi(F)(\tau_\xi(g_a))^r<1\,\,\,\mbox{ for all}\,\,\,\xi\in\T^3.$$
\end{prop}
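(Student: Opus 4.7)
The plan is to insert the explicit upper bounds from Lemmas \ref{lip} and \ref{tau} into the product $l_\xi(F)\,\tau_\xi(g_a)^r$ and verify the inequality pointwise, splitting according to whether $\xi$ lies outside or inside the modification ball $B(p,\rho)$. Recall that the matrices $B_a$ in $\mathcal{I}$ satisfy $\lambda_s(a)=K_a\,a^{-4}$, $\lambda_c(a)=K'_a\,a^{-4}$ and $\lambda_u(a)=a^{8}/(K_a K'_a)$ with $K_a,K'_a\in(1/10,10)$, so the parameter $a$ controls the dominant asymptotics while $k$ contributes only lower-order corrections.

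For $\xi\notin B(p,\rho)$, Lemmas \ref{lip}(\ref{lfuera}) and \ref{tau} give directly
$$l_\xi(F)\,\tau_\xi(g_a)^r \;\le\; \frac{\lambda_c}{\lambda_u\,\lambda_s^{r}} \;=\; (K'_a)^{2}\,K_a^{1-r}\,a^{4r-12}.$$
Since $K_a,K'_a$ are uniformly bounded and $4r-12<0$ for $r<3$, this quantity tends to zero as $a\to\infty$, and in particular can be forced strictly below $1/2$ by choosing $a$ sufficiently large. This is also the exact point at which the value $r=3$ is seen to be critical: the exponent of $a$ vanishes there.

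For $\xi\in B(p,\rho)$, set $T:=Z(z)\beta_k(r)\in[0,\beta_k(0)]$. The bound $|\beta'_k(r)\,r|\le k$ from Lemma \ref{mod} gives $\lambda_{1,g}(\xi)\ge T-2k$, and combining with Lemma \ref{lip}(\ref{ldentro}) one obtains
$$l_\xi(F)\,\tau_\xi(g_a)^r \;\le\; \frac{\psi(T)}{\lambda_u},\qquad \psi(T):=\frac{\lambda_c+T+k}{(\lambda_s+T-2k)^{r}}.$$
A direct logarithmic derivative calculation yields
$$\operatorname{sign}\psi'(T) \;=\; \operatorname{sign}\bigl(\lambda_s - r\lambda_c - (r-1)T - (r+2)k\bigr),$$
which is strictly negative for every $T\ge 0$ whenever $r\ge 1$, $\lambda_s<\lambda_c$ and $k>0$. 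Hence $\psi$ is decreasing on $[0,\beta_k(0)]$, so $\psi(T)\le\psi(0)=(\lambda_c+k)/(\lambda_s-2k)^{r}$, which differs from the exterior bound only by $O(k)$-corrections in numerator and denominator.

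The proof concludes by a two-step choice of parameters: first fix $a$ large so that $(K'_a)^{2}K_a^{1-r}a^{4r-12}<1/4$, which is possible because $r<3$; then choose $k$ small enough that $k<\lambda_s(a)/3$, keeping $\lambda_s-2k>\lambda_s/3$, and so that the additive $k$-corrections in $\psi(0)/\lambda_u$ do not push the bound past $1$. The only nonroutine step is the monotonicity of $\psi$, which reduces the interior estimate to its boundary value $T=0$ and hence to essentially the same expression as the exterior case; everything else is asymptotic bookkeeping with the explicit constants $K_a,K'_a$ of the family $\mathcal{I}$.
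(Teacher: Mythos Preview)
Your proof is correct and follows the same overall architecture as the paper: invoke Lemmas~\ref{lip} and~\ref{tau}, split into $\xi\notin B(p,\rho)$ and $\xi\in B(p,\rho)$, and use the explicit eigenvalue scaling $\lambda_s,\lambda_c\sim a^{-4}$, $\lambda_u\sim a^{8}$ to force the product below~$1$ for $r<3$.

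The handling of the interior case is where you diverge. The paper adds and subtracts $\lambda_{1,g_a}(\xi)$ in the numerator, uses $Z(z)\beta(r)-\lambda_{1,g_a}(\xi)\le 2k$, and chains several crude inequalities (including $3k<\lambda_s(a)$ and $\lambda_{1,g_a}(\xi)\ge -k$) to arrive at an explicit bound $\le 10^{4}\,a^{4(r-1)-8}$ uniform in $\xi$. You instead parametrize by $T=Z(z)\beta(r)$, note that $\lambda_{1,g_a}(\xi)\ge T-2k$, and observe that the resulting one-variable function $\psi(T)=(\lambda_c+T+k)/(\lambda_s+T-2k)^{r}$ is strictly decreasing for $r\ge 1$. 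This single monotonicity step immediately reduces the interior estimate to its value at $T=0$, which is the exterior bound perturbed by $O(k)$. Your route is shorter and makes transparent \emph{why} the interior is no worse than the boundary; the paper's route yields a constant that is uniform in $k$ (once $3k<\lambda_s(a)$) without needing the separate ``first~$a$, then~$k$'' continuity argument at the end. Both reach the same conclusion with the same dependence on $r<3$.
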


\begin{proof}
For the sake of simplicity, for $\xi\in\T^3$  set
$l_{\xi,a}=l_\xi(F_{B_a,g_a})$ and $\tau_{\xi,a}=\tau_\xi(g_a).$

Fix $r, 1\le r<3.$ It is enough to prove the proposition to show
that
$$\lim_{a\to\infty} l_{\xi,a}\tau_{\xi,a}^r=0$$
uniformly on $\xi\in\T^3.$ To do so, from Lemmas \ref{lip} and
\ref{tau}, we have for $\xi\notin B(p,\rho)$:
\begin{equation}\label{liptau}
l_{\xi,a}\tau_{\xi,a}^r=\frac{\lambda_c(a)}{\lambda_u(a)}\frac{1}{\lambda_s(a)^{r}}=
\frac{\lambda_c(a)^{2}}{\lambda_s(a)^{r-1}} = \frac{K'_a
a^{4(r-1)}}{K_aa^{8}}\le 100\frac{a^{4(r-1)}}{a^{8}}
\end{equation}
and for $\xi\in B(p,\rho):$
\begin{eqnarray*}
l_{\xi,a}\tau_{\xi,a}^r&=&\frac{\lambda_c(a)+Z(z)\beta(r)+k}{\lambda_u(a)}
\left[\frac{1}{\lambda_s(a)+\lambda_{1,g_a}(\xi)}\right]^{r} \\&&\\
&=&\frac{1}{\lambda_u(a)}\left[
\frac{\lambda_c(a)+\lambda_{1,g_a}(\xi)}{(\lambda_s(a)+\lambda_{1,g_a}(\xi))^{r}}
+\frac{k+Z(z)\beta(r)-\lambda_{1,g_a}(\xi)}{(\lambda_s(a)+\lambda_{1,g_a}(\xi))^{r}}
\right]
\end{eqnarray*}
Since $Z(z)\beta(r)-\la_{1,g_a}(\xi)\le 2k$ we have
\begin{eqnarray*}
l_{\xi,a}\tau_{\xi,a}^r&\le&\frac{1}{\lambda_u(a)}\left[
\frac{\lambda_c(a)+\lambda_{1,g_a}(\xi)}{(\lambda_s(a)+\lambda_{1,g_a}(\xi))^{r}}
+\frac{3k}{(\lambda_s(a)+\lambda_{1,g_a}(\xi))^{r}}  \right] \\&&\\
&\le& \frac{1}{\lambda_u(a)}\left[
\frac{\lambda_s(a)+\lambda_{1,g_a}(\xi)+
(\lambda_c(a)-\lambda_s(a)+3k)}{(\lambda_s(a)+\lambda_{1,g_a}(\xi))^{r}}
\right]
\end{eqnarray*}
We may assume, for fixed $a$ that $3k<\la_s(a)<10\frac{1}{a^4}.$
From the fact that $0<\la_c(a)-\la_s(a)<10\frac{1}{a^4}$ and also
that $\la_{1,g_a}(\xi)\ge -k$ we have
\begin{eqnarray*}
l_{\xi,a}\tau_{\xi,a}^r&\le&\frac{1}{\lambda_u(a)}\left[ \frac{\lambda_s(a)+\lambda_{1,g_a}(\xi)+ 20\frac{1}{a^4}}{(\lambda_s(a)+\lambda_{1,g_a}(\xi))^{r}} \right]\\
&\le& \frac{1}{\lambda_u(a)}\left[
\frac{1}{(\lambda_s(a)+\lambda_{1,g_a}(\xi))^{r-1}} +
\frac{20}{a^4(\lambda_s(a)+\lambda_{1,g_a}(\xi))^{r}} \right]\\
&\le&\frac{1}{\lambda_u(a)}\left[ \frac{1}{(\lambda_s(a)-k)^{r-1}} +
\frac{20}{a^4(\lambda_s(a)-k)^{r}} \right]\\
&\le& \frac{100}{a^8}\left[\frac{2}{(\la_s(a))^{r-1}}+\frac{40}{a^4(\la_s(a))^r}\right]\\
&\le& \frac{100}{a^8}\left[8a^{4(r-1)}+40a^{4(r-1)}\right]\\
&\le& 10^4\frac{a^{4(r-1)}}{a^8}
\end{eqnarray*}
From this and \eqref{liptau} and taking into account that $1\le r<3$
we have for $a\in\N$ large enough that
$$l_{\xi,a}\tau_{\xi,a}^r<1$$
for any $\xi\in\T^2.$ This completes the proof of the proposition.

\end{proof}

We can conclude the proof of our Main Theorem: let $r,1\le r<3$ and
choose $B_a\in\mathcal{I}$ and $g_{B_a,k}$ from the above
Proposition. From Corollary \ref{sectiondisc} we find $\mathcal
U(g_{B_a,k})$ and we choose $g\in\mathcal U(g_{B_a,k})$ of class
$C^\infty$ and having a homoclinic intersection associated to the
fixed point $p$ of unstable index two. From Theorem \ref{crsection},
Corollary \ref{sectiondisc} and remark \ref{crbundle}  the unstable
foliation $\F^u_g$ is of class $C^r$ and so, by remark \ref{dif} the
induced map $f=f_g:\T^2\to\T^2$ is of class $C^r.$ Finally, Theorem
\ref{maintopologico} implies our Main Theorem.

\end{document}